\documentclass[a4paper,12pt]{amsart}
\usepackage[utf8]{inputenc}
\usepackage[margin=2.5cm]{geometry}
\usepackage{amsmath}
\usepackage{amsfonts}
\usepackage{amssymb}
\usepackage{amsthm}
\usepackage{tikz-cd} 
\usepackage{pdfpages}
\usepackage{hyperref} 
\hypersetup{
    citebordercolor = {cyan},
    urlbordercolor = {magenta}
}

\theoremstyle{plain}
\newtheorem*{thm*}{Theorem}
\newtheorem{thm}{Theorem}[section]
\newtheorem{prop}[thm]{Proposition}
\newtheorem{lem}[thm]{Lemma}
\newtheorem{cor}[thm]{Corollary}
\newtheorem{conj}[thm]{Conjecture}

\theoremstyle{definition}

\newtheorem{dfn}[thm]{Definition}
\newtheorem{ex}[thm]{Example}
\newtheorem{rem}[thm]{Remark}
\newtheorem{ntt}[thm]{Notation}

\numberwithin{equation}{section}

\newcommand{\beq}{\begin{equation}}
\newcommand{\eeq}{\end{equation}}

\newcommand{\ZZ}{\mathbb{Z}}

\newcommand{\NN}{\mathbb{N}}

\newcommand{\kk}{\Bbbk}
\newcommand{\Aa}{\mathbb{A}}

\newcommand{\g}{\mathfrak{g}}
\newcommand{\h}{\mathfrak{h}}
\newcommand{\LL}{\mathcal{L}}
\DeclareMathOperator{\ad}{ad}

\newcommand{\mc}{\mathcal}
\newcommand{\mf}{\mathfrak}


\newcommand{\inv}[1]{#1^{-1}}


\DeclareMathOperator{\Ann}{Ann}

\DeclareMathOperator{\Aut}{Aut}
\DeclareMathOperator{\spn}{span}
\DeclareMathOperator{\gr}{gr}
\DeclareMathOperator{\Der}{Der}

\newcommand{\W}{W_{\geq -1}}

\DeclareMathOperator{\Ver}{Ver}

\newcommand{\del}{\partial}

\newcommand\restr[2]{{
  \left.\kern-\nulldelimiterspace 
  #1 
  \vphantom{\big|}
  \right|_{#2}
  }}

\title{Enveloping algebras of Krichever-Novikov algebras are not noetherian}
\author{Lucas Buzaglo}
\date{}

\keywords{Krichever-Novikov algebra, Witt algebra, non-noetherian, universal enveloping algebra}
\subjclass[2020]{16P40, 16S30, 17B66 (Primary), 16W25, 17B65, 17B68 (Secondary)}

\begin{document}

\begin{abstract}
    This work is part of the overarching question of whether it is possible for the universal enveloping algebra of an infinite-dimensional Lie algebra to be noetherian. The main result of this paper is that the universal enveloping algebra of any Krichever-Novikov algebra is not noetherian, extending a result of Sierra and Walton on the Witt (or classical Krichever-Novikov) algebra.
    
    As a subsidiary result, which may be of independent interest, we show that if $\h$ is a Lie subalgebra of $\g$ of finite codimension, then the noetherianity of $U(\h)$ is equivalent to the noetherianity of $U(\g)$.
    
    The second part of the paper focuses on Lie subalgebras of $\W = \Der(\kk[t])$. In particular, we prove that certain subalgebras of $\W$ (denoted by $L(f)$, where $f \in \kk[t]$) have non-noetherian universal enveloping algebras, and provide a sufficient condition for a subalgebra of $\W$ to have a non-noetherian universal enveloping algebra. Furthermore, we make significant progress on a classification of subalgebras of $\W$ by showing that any infinite-dimensional subalgebra must be contained in some $L(f)$ in a canonical way.
\end{abstract}

\maketitle

\section*{Introduction}

Let $\kk$ be a field of characteristic $0$ and let $\overline{\kk}$ be its algebraic closure. For brevity, we say that a $\kk$-algebra is \emph{noetherian} if it is left and right noetherian. Note that the universal enveloping algebra of a Lie algebra is left noetherian if and only if it is right noetherian. Therefore, when referring to universal enveloping algebras there is no need to distinguish between left and right noetherianity.

We are motivated by the long-standing question of whether it is possible for an infinite-dimensional Lie algebra to have a noetherian universal enveloping algebra. This question has been posed by many authors, but its earliest appearance was nearly fifty years ago, in Amayo and Stewart's book on infinite-dimensional Lie algebras \cite[Question 2.7]{AmayoStewart}.

It is widely believed that the answer to this question is ``no" \cite[Conjecture 0.1]{SierraWalton}. The simplest example, an infinite-dimensional abelian Lie algebra, has a universal enveloping algebra isomorphic to a polynomial ring in infinitely many variables, which is clearly non-noetherian.

In \cite{SierraWalton}, Sierra and Walton proved that the universal enveloping algebra of the \emph{Witt algebra} $W = \Der(\kk[t,\inv{t}])$ is not noetherian, providing the first nontrivial example of a non-noetherian enveloping algebra. The question of whether $U(W)$ is noetherian first appeared in print in \cite{DeanSmall}, over twenty years before being answered.

Sierra and Walton's proof also allows us to conclude that the Lie algebra $W_{\geq -1} = \Der(\kk[t])$ has a non-noetherian universal enveloping algebra. An easy consequence of this result is that any infinite-dimensional $\ZZ$-graded simple Lie algebra of polynomial growth has a non-noetherian enveloping algebra \cite[Corollary 0.6]{SierraWalton}. Beyond this, the noetherianity of enveloping algebras of infinite-dimensional Lie algebras remains mysterious. This paper focuses on questions of noetherianity for general Lie algebras of vector fields on affine curves.

We begin by proving the following result, which will be useful throughout the paper, and may be of independent interest.

\begin{prop}[Proposition \ref{prop:finite codimension}]\label{thm:intro finite codimension}
    Let $\h \subseteq \g$ be Lie algebras such that $\h$ has finite codimension in $\g$ (i.e. $\dim_\kk (\g/\h) < \infty)$. Then $U(\h)$ is noetherian if and only if $U(\g)$ is noetherian.
\end{prop}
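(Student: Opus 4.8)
The plan is to pass to the universal enveloping algebras via the PBW theorem and reduce the statement to a ``finiteness'' property of the inclusion $U(\h) \hookrightarrow U(\g)$. The key observation is that, by the PBW theorem, $U(\g)$ is a free left (and right) $U(\h)$-module: if we choose a $\kk$-basis of $\g$ consisting of a basis of $\h$ together with finitely many elements $x_1, \dots, x_n$ whose images span $\g/\h$, then the PBW monomials in the $x_i$ form a $U(\h)$-basis of $U(\g)$ on either side. In particular $U(\g)$ is \emph{faithfully flat} as a left and as a right $U(\h)$-module. Since faithful flatness lets one descend the ascending chain condition, one direction is immediate: if $U(\g)$ is noetherian, then $U(\h)$ is noetherian, because for any left ideal $I \subseteq U(\h)$ we have $(U(\g) \otimes_{U(\h)} I) \cap U(\h) = I$ (using faithful flatness / the fact that $U(\h)$ is a direct summand of $U(\g)$ as a $U(\h)$-bimodule), so a strictly ascending chain of left ideals in $U(\h)$ would give one in $U(\g)$; symmetrically on the right.

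The substantive direction is the converse: assuming $U(\h)$ is noetherian, deduce that $U(\g)$ is noetherian. Here the right tool is a filtration argument together with the fact that $U(\g)$ is a finitely generated module over $U(\h)$ in a suitable ``triangular'' sense — more precisely, I would use that $U(\g)$ is generated as an algebra by $U(\h)$ and the finitely many elements $x_1, \dots, x_n$, and that the adjoint action of each $x_i$ on $\g$ sends $\h$ into $\g$, so that modulo $\h$ the $x_i$ span a finite-dimensional space. The cleanest route is probably an induction reducing to the case $n = 1$, i.e. $\dim_\kk(\g/\h) = 1$, writing $\g = \h \oplus \kk x$. Then $U(\g)$ is a filtered ring with $U(\g) = \bigcup_{m \geq 0} U(\h) + U(\h)x + \dots + U(\h)x^m$, and the associated graded ring with respect to the filtration by powers of $x$ is a quotient of a skew polynomial ring $U(\h)[x; \delta]$ (or an Ore-type extension) over the noetherian ring $U(\h)$. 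Since Ore extensions of noetherian rings are noetherian, and $U(\g)$ is filtered with noetherian associated graded, $U(\g)$ is noetherian; then induct on $n$.

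The main obstacle I anticipate is handling the fact that $\h$ need not be an ideal of $\g$, so $x \cdot U(\h) \not\subseteq U(\h) \cdot x$ in general and there is no genuine Ore extension structure on the nose — only a filtered approximation to one. The way to deal with this is to set up the filtration on $U(\g)$ carefully so that the associated graded object really is (a quotient of) an iterated Ore extension of $U(\h)$: one filters $U(\g)$ by assigning degree $0$ to $U(\h)$ and degree $1$ to each $x_i$, uses the PBW basis to identify the degree-$m$ piece, and checks that in $\gr U(\g)$ the relations become $x_i \cdot u = u \cdot x_i + [\text{lower-degree terms}]$, i.e. that $\gr U(\g)$ is a homomorphic image of a skew polynomial algebra $U(\h)[x_1, \dots, x_n; \dots]$ over $U(\h)$. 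Because $\h$ has finite codimension, the commutators $[x_i, u]$ for $u \in U(\h)$ stay controlled, and the Leibniz rule guarantees the derivations involved are well-defined; the standard theorem that a filtered ring with left (resp. right) noetherian associated graded ring is itself left (resp. right) noetherian then finishes the argument. I would take care to verify the hypotheses of that theorem (the filtration is exhaustive and separated, and each filtered piece is suitably finite over the degree-zero part) rather than glossing over them, since that is where the finite-codimension hypothesis is genuinely used.
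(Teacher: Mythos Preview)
Your plan has a real gap in the hard direction. Both routes you suggest break down for the same reason: when $\h$ is not an ideal of $\g$, the commutators $[x_i,h]$ land back in $\g$, not in $\h$, so in your one-step filtration (degree $0$ on $U(\h)$, degree $1$ on each $x_i$) the relation in $\gr U(\g)$ is
\[
\bar{x}_i\,h - h\,\bar{x}_i \;=\; \text{(image of $[x_i,h]$ in $\g/\h$)} \;=\; \sum_j \lambda_{ij}(h)\,\bar{x}_j,
\]
which is of the \emph{same} degree, not lower. Thus $\gr U(\g)$ is not a skew polynomial ring over $U(\h)$; it is $U\bigl(\h \ltimes (\g/\h)^{\mathrm{ab}}\bigr)$, i.e.\ exactly another instance of the problem you started with. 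Your induction on $\dim(\g/\h)$ fails for the same reason: there need not exist an intermediate Lie subalgebra $\h \subsetneq \h' \subsetneq \g$. For a concrete obstruction, take $\h=\mf{sl}_2$ and $\g=\mf{sl}_2 \ltimes V$ with $V$ the irreducible $2$-dimensional representation; then $\g/\h\cong V$ has no $1$-dimensional $\h$-submodule, so no codimension-one step is available.

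The paper supplies the missing idea. One first passes to the ideal
\[
\mf{k}=\{k\in\h \mid [k,\g]\subseteq\h\}=\ker\bigl(\h\to\mf{gl}(\g/\h)\bigr),
\]
which has finite codimension in $\h$ (Lemma~\ref{lem:k}), and then uses a \emph{two-step} filtration: degree $0$ on $U(\mf{k})$, degree $1$ on a basis of $\h/\mf{k}$, degree $2$ on a basis of $\g/\h$. The point of $\mf{k}$ is that $[\mf{k},x_i]\subseteq\mf{k}$ (degree $0$) and $[\mf{k},y_j]\subseteq\h$ (degree $\le 1$), while $[x_i,y_j]\in\g$ (degree $\le 2$); with these weights every commutator drops strictly in degree, so all the $\bar{x}_i,\bar{y}_j$ become \emph{central} in $\gr U(\g)$ and one gets a genuine polynomial ring $U(\mf{k})[\bar{x}_1,\ldots,\bar{x}_n,\bar{y}_1,\ldots,\bar{y}_m]$. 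Noetherianity of $U(\h)$ descends to $U(\mf{k})$ by faithful flatness, and Hilbert's basis theorem finishes. Your outline was on the right track in reaching for a filtration with noetherian associated graded, but without the passage to $\mf{k}$ and the $1$/$2$ weighting the argument cannot close.
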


The proof is achieved by constructing a positive filtration of $U(\g)$ with the property that the associated graded ring is noetherian if $U(\h)$ is noetherian, which is enough to conclude that $U(\g)$ is noetherian.

In Section \ref{sec:affine curves}, we consider \emph{Krichever-Novikov algebras}. A Krichever-Novikov algebra is the Lie algebra of vector fields on an affine curve. Since the Lie algebras considered by Sierra and Walton are examples of Krichever-Novikov algebras, it is natural to ask whether universal enveloping algebras of Krichever-Novikov algebras are noetherian. We show that they are not, providing a second general class of infinite-dimensional Lie algebras with non-noetherian universal enveloping algebras.

\begin{thm}[Theorem \ref{thm:main}]\label{thm:intro main}
    Let $\LL$ be a Krichever-Novikov algebra. Then $U(\LL)$ is not noetherian.
\end{thm}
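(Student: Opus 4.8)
A Krichever-Novikov algebra $\LL$ is by definition the Lie algebra $\Der(R)$ of vector fields on a smooth affine curve, where $R$ is the coordinate ring of that curve (typically one allows poles at a fixed finite set of points, so $R$ is an affine domain of Krull dimension $1$ over $\kk$, and $\LL = \Der_\kk(R)$, which as an $R$-module is projective of rank $1$). The key structural fact I would establish first is that $\LL$ contains a subalgebra isomorphic — or at least closely related — to $\W = \Der(\kk[t])$, or that after passing to $\overline{\kk}$ one can find inside $\LL$ a finite-codimension subalgebra that maps onto a Lie algebra already known to be non-noetherian. Concretely: choose a nonconstant element $t \in R$; this gives a finite ring extension $\kk[t] \subseteq R$, and I want to relate $\Der_\kk(R)$ to $\Der_\kk(\kk[t])$.

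**Step one: set up the algebraic framework.** Let $R$ be the coordinate ring of the smooth affine curve $X$. Pick $t \in R$ nonconstant, giving a dominant morphism $X \to \Aa^1$, i.e.\ a finite injective ring map $\kk[t] \hookrightarrow R$ making $R$ a finitely generated torsion-free (hence projective) $\kk[t]$-module of some rank $n \geq 1$. Consider the subalgebra
\[
\LL_0 = \{\, D \in \Der_\kk(R) : D(\kk[t]) \subseteq \kk[t] \,\} = \{\, D \in \LL : D(t) \in \kk[t] \,\}.
\]
The restriction map $\LL_0 \to \Der_\kk(\kk[t]) = \W$, $D \mapsto D|_{\kk[t]}$, is a Lie algebra homomorphism. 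I would argue that (a) this map is surjective, because $\W$ is spanned by $t^{k}\,\partial_t$ for $k\ge 0$ and each such derivation of $\kk[t]$ extends to a derivation of the fraction field $\operatorname{Frac}(R)$ by separability, then can be scaled by a suitable nonzero element of $R$ — or better, one identifies $\LL \otimes_R \operatorname{Frac}(R)$ with the derivations of the function field, which is $1$-dimensional over $\operatorname{Frac}(R)$, so $\partial_t$ extends and multiplying by powers of $t$ stays inside $\LL_0$; and (b) the kernel and the "defect" of $\LL_0$ inside $\LL$ are controlled. Actually the cleanest route: show $\LL_0$ has \emph{finite codimension in} $\LL$. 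Since $\LL$ is a rank-$1$ projective $R$-module and $R$ is finite over $\kk[t]$, $\LL$ is a finitely generated $\kk[t]$-module; the condition $D(t) \in \kk[t]$ cuts out a $\kk[t]$-submodule whose quotient is a finitely generated \emph{torsion} $\kk[t]$-module (it is torsion because after inverting any element of $\kk[t]$ the derivation $\partial_t$ already lies in the localized $\LL_0$), hence finite-dimensional over $\kk$. Thus $\dim_\kk(\LL/\LL_0) < \infty$.

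**Step two: invoke the reduction.** By Proposition~\ref{thm:intro finite codimension}, $U(\LL)$ is noetherian if and only if $U(\LL_0)$ is noetherian. Now $\LL_0 \twoheadrightarrow \W$ with some kernel $\mf{k}$. This surjection induces a surjection $U(\LL_0) \twoheadrightarrow U(\W)$. A quotient of a noetherian ring is noetherian, so if $U(\LL_0)$ were noetherian, then $U(\W)$ would be noetherian — contradicting the Sierra–Walton theorem (their proof yields non-noetherianity of $U(\W_{\ge -1}) = U(\Der(\kk[t]))$, as recalled in the introduction). Hence $U(\LL_0)$ is not noetherian, so $U(\LL)$ is not noetherian. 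One caveat to handle: if $\kk$ is not algebraically closed, a Krichever-Novikov algebra is still $\Der_\kk(R)$ for $R$ an affine $\kk$-algebra, and every step above is insensitive to the ground field (one only needs $R$ to be a smooth affine domain of dimension $1$, so that $\Der(R)$ is projective of rank $1$, and one needs the existence of a nonconstant $t$, which is automatic). So no base change is needed; but if one preferred, $U(\LL\otimes_\kk \overline\kk) = U(\LL)\otimes_\kk\overline\kk$ is faithfully flat over $U(\LL)$, reducing to the algebraically closed case.

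**The main obstacle** is Step one, specifically verifying that $\LL_0$ genuinely surjects onto all of $\W$ (not just onto a finite-codimension subalgebra of $\W$) and that $\LL/\LL_0$ is finite-dimensional, with care about what "Krichever-Novikov algebra" means in full generality — whether the curve is required smooth, whether poles at marked points are allowed (so $R$ is a localization of a smooth affine coordinate ring), and whether $\LL$ is all of $\Der(R)$ or only the "meromorphic" vector fields. In the standard setup $\LL$ is a rank-$1$ projective $R$-module generated over $\operatorname{Frac}(R)$ by $\partial_t$; the finite-codimension claim then reduces to a concrete statement about the conductor of $\kk[t] \subseteq R$ and the finitely many points of $X$ over the ramification/pole locus, which is a finite computation in commutative algebra of curves. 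If surjectivity onto $\W$ turned out to fail, the fallback is that $\LL_0$ still surjects onto a finite-codimension subalgebra $\h \subseteq \W$, and applying Proposition~\ref{thm:intro finite codimension} \emph{again} (now to $\h \subseteq \W$) together with Sierra–Walton still gives that $U(\h)$, hence $U(\LL_0)$, hence $U(\LL)$, is non-noetherian — so the argument is robust.
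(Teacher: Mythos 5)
There is a genuine gap, and it sits exactly where you flagged the ``main obstacle.'' Your reduction hinges on the claim that $\LL_0 = \{D \in \Der(R) : D(t) \in \kk[t]\}$ has finite codimension in $\LL = \Der(R)$, so that Proposition \ref{prop:finite codimension} applies upward. This is false whenever the degree $n = [Q(R):\kk(t)]$ is at least $2$. Since $Q(R)/\kk(t)$ is finite separable, $\Der(Q(R))$ is one-dimensional over $Q(R)$, spanned by the unique extension $\widetilde{\del_t}$ of $\del_t$; every $D \in \LL$ equals $h\widetilde{\del_t}$ with $h = D(t) \in Q(R)$, so $\LL_0 = \LL \cap \kk[t]\widetilde{\del_t}$ has rank $1$ as a $\kk[t]$-module while $\LL$ has rank $n$. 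The quotient $\LL/\LL_0$ is therefore not torsion (your localization argument only shows that $\widetilde{\del_t}$ itself lies in the localized module, not that the localized quotient vanishes), and $\dim_\kk(\LL/\LL_0) = \infty$. Concretely, take $R = \kk[s]$ with $t = s^2$: then $\LL = \kk[s]\del_s$ and $\LL_0 = s\kk[s^2]\del_s$, which has infinite codimension in $\LL$. For the same reason the restriction map $\LL_0 \to \W$ is not surjective: it is injective (a derivation of $R$ vanishing on $\kk(t)$ is zero by separability), with image the proper subalgebra $J\del_t$, where $J = \{h \in \kk[t] : h\widetilde{\del_t}(R) \subseteq R\}$ is a nonzero ideal of $\kk[t]$. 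So your ``fallback'' image $\h$ of finite codimension in $\W$ does exist, but you cannot get from $U(\LL_0)$ back to $U(\LL)$ via Proposition \ref{prop:finite codimension} applied to $\LL_0 \subseteq \LL$, and the quotient-of-noetherian argument never gets off the ground.

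The repair is to run the argument in the opposite direction, which is what the paper does: $\LL_0 \cong \h = \W(f)$ is isomorphic to a finite-codimension subalgebra of $\W$, so $U(\LL_0)$ is not noetherian by Proposition \ref{prop:finite codimension} together with Sierra--Walton; then, since noetherianity of an enveloping algebra passes to enveloping algebras of Lie subalgebras by faithful flatness (\cite[Lemma 1.7]{SierraWalton}), $U(\LL)$ is not noetherian because $\LL_0 \subseteq \LL$. (The paper constructs the inverse of your restriction map directly: a vector field on $\Aa^1$ vanishing on the branch locus of $\pi \colon C \to \Aa^1$ extends uniquely to a vector field on $C$, giving an embedding $\W(f) \hookrightarrow \LL(C)$.) Finally, the theorem as stated also covers singular curves, which your proposal does not address: there the paper passes to the normalisation $\widetilde{C}$ and uses the conductor ideal $I = \Ann_{\kk[C]}(\kk[\widetilde{C}]/\kk[C])$ to produce the subalgebra $I\LL(\widetilde{C})$, which has finite codimension in both $\LL(C)$ and $\LL(\widetilde{C})$, and only there is Proposition \ref{prop:finite codimension} genuinely applied in the upward direction.
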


In order to prove Theorem \ref{thm:intro main} we first note that, by a result from \cite{SierraWalton}, it suffices to show that $\LL$ has a Lie subalgebra with a non-noetherian universal enveloping algebra. We begin by considering the case where $\LL$ is the Lie algebra of vector fields on a nonsingular curve. In this case, we construct an injective Lie algebra homomorphism from a finite-codimensional subalgebra of $W_{\geq -1}$ to $\LL$. By Proposition \ref{thm:intro finite codimension}, this subalgebra has a non-noetherian universal enveloping algebra, therefore proving that $U(\LL)$ is not noetherian.

The injective Lie algebra homomorphism is constructed as follows: let $\pi \colon C \to \Aa^1$ be a dominant morphism, where $C$ is the nonsingular affine curve corresponding to the Krichever-Novikov algebra $\LL$. We show that any vector field $v$ on $\Aa^1$ which vanishes on the branch point locus of $\pi$ extends uniquely to a vector field $\widetilde{v}$ on $C$. The map $v \mapsto \widetilde{v}$ is the injective homomorphism we need.

If $\LL$ is the Lie algebra of vector fields on a singular curve $C$, we require a different argument. We consider the normalisation $\widetilde{C}$ of $C$ and let $\widetilde{\LL}$ be the Lie algebra of vector fields on $\widetilde{C}$. We construct a Lie subalgebra of $\widetilde{\LL}$ which is also a subalgebra of $\LL$ and further prove that the enveloping algebra of this subalgebra is not noetherian. This is enough to conclude that $U(\LL)$ is not noetherian.

At the end of Section \ref{sec:affine curves}, we further show that some related Lie algebras known as \emph{Krichever-Novikov type algebras} (see Definition \ref{def:KN type}) also have non-noetherian enveloping algebras.

After this, in Section \ref{sec:subalgebras}, we turn our attention to arbitrary subalgebras of $W_{\geq -1}$ and make the following conjecture.

\begin{conj}\label{conj:subalgebras}
    Let $\mf{g}$ be an infinite-dimensional Lie subalgebra of $W_{\geq -1}$. Then $U(\mf{g})$ is not noetherian.
\end{conj}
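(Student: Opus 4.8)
The plan is to reduce the conjecture, via the structural results developed in Section \ref{sec:subalgebras}, to the case of the algebras $L(f)$, whose enveloping algebras are already shown there to be non-noetherian. The starting point is the observation (due to \cite{SierraWalton}, and used in the proof of Theorem \ref{thm:intro main}) that by the PBW theorem $U(\g)$ is free, hence faithfully flat, as a module over $U(\h)$ for every Lie subalgebra $\h \subseteq \g$; consequently noetherianity of $U(\g)$ descends to $U(\h)$, so it suffices to exhibit \emph{some} subalgebra $\h \subseteq \g$ with $U(\h)$ not noetherian. Combining this with the classification statement that every infinite-dimensional $\g \subseteq \W$ lies in a canonical $L(f)$, the conjecture would follow from the single assertion: every infinite-dimensional Lie subalgebra $\g$ of an $L(f)$ contains a subalgebra that is either of finite codimension in some $L(g)$, or abstractly isomorphic to such, or satisfies the sufficient condition of Section \ref{sec:subalgebras}. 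In each case one concludes by the $L(f)$-theorem together with Proposition \ref{thm:intro finite codimension}.

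The heart of the argument is therefore a dichotomy for an infinite-dimensional $\g \subseteq L(f)$. If $\g$ has \emph{finite} codimension in $L(f)$, Proposition \ref{thm:intro finite codimension} finishes the job at once. The remaining case — $\g$ infinite-dimensional but of infinite codimension in its canonical $L(f)$ — is where the real work lies. The model to keep in mind is the graded case: a graded infinite-dimensional subalgebra of $\W$ has the form $\spn\{t^{n}\partial : n \in S\}$, where the shifted set $S - 1$ is closed under addition and hence, being infinite, contains all sufficiently large multiples of its gcd $d$; after the substitution $s = t^{d}$ such a subalgebra becomes of finite codimension in an $L(g)$ in the variable $s$. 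Equivalently, it is the pullback of a Krichever--Novikov algebra along the finite map $\Aa^1 \to \Aa^1$, $t \mapsto t^{d}$, and so has non-noetherian enveloping algebra by the argument of Section \ref{sec:affine curves}. The plan for the general case is to show that an arbitrary infinite-dimensional $\g \subseteq L(f)$ is, up to finite codimension, of this pullback form for a suitable finite morphism of affine curves, thereby reducing to Theorem \ref{thm:intro main} and Proposition \ref{thm:intro finite codimension}.

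The main obstacle is precisely this last step: controlling \emph{non-graded} infinite-dimensional subalgebras of $\W$. The filtration of an arbitrary subalgebra of $\W$ by order of vanishing need not have the same shape as that of a graded subalgebra, so one cannot simply pass to the associated graded object and invoke the graded analysis; and it is not clear a priori that every infinite-dimensional $\g \subseteq L(f)$ of infinite codimension arises, up to finite codimension, as such a pullback. What seems to be required is a genuine classification of the infinite-dimensional Lie subalgebras of $\W$ \emph{up to finite codimension} — a strengthening of the ``contained in a canonical $L(f)$'' statement that pins $\g$ down inside an explicit finite-dimensional family of models — which is exactly the classification problem this paper only partially resolves. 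A more modest intermediate target, which may be within reach, is to enlarge the class covered by the sufficient condition of Section \ref{sec:subalgebras} to include, for instance, every $\g$ whose image in $\gr \W$ (with respect to the degree filtration) is infinite-dimensional, leaving only genuinely thin non-graded subalgebras to be dealt with separately.
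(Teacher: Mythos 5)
The statement you were given is Conjecture \ref{conj:subalgebras}, which the paper explicitly does not prove (``We are not able to prove Conjecture \ref{conj:subalgebras}''), and your proposal, candidly, does not prove it either: it is a reduction strategy whose final step is left open. That said, the reduction you describe is essentially the one the paper carries out. By \cite[Lemma 1.7]{SierraWalton} it suffices to exhibit any subalgebra of $\g$ with non-noetherian enveloping algebra; by Theorem \ref{thm:subfield generated by R(g)} every infinite-dimensional $\g \subseteq \W$ sits inside a canonical $L(f)$; and if $\g$ has finite codimension in that $L(f)$, one concludes from Proposition \ref{prop:finite codimension} together with Proposition \ref{prop:U(L(f,g) not noetherian} --- this is exactly the content of Theorem \ref{thm:S is a field?} and Corollary \ref{cor:R(g) is a field}, where finite codimension in $L(f)$ is shown to be equivalent to $R(\g)$ being a field. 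The genuinely open step --- that $\g$ always has finite codimension in its canonical $L(f)$, or some substitute argument when it does not --- is precisely Conjecture \ref{conj:classification}, and neither you nor the paper closes it. So there is a real gap, but it is the same gap the paper acknowledges, and you have located it correctly.

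Two smaller points. Your ``pullback along $t \mapsto t^d$'' reading of the graded case is consistent with the paper: $\Ver_d = L(t^d,t) \cong W_{\geq 0}$, and Corollary \ref{cor:subalgebra in Veronese} places every infinite-dimensional graded subalgebra at finite codimension in $\W$ or in some $\Ver_d$. However, your proposed ``more modest intermediate target'' --- covering every $\g$ whose image in $\gr\W$ is infinite-dimensional --- is vacuous as stated: the span of leading terms of an infinite-dimensional $\g$ is always an infinite-dimensional graded subalgebra, so the condition holds for every $\g$ in the conjecture, and the non-noetherianity of $U(\gr\g)$ (which does follow from Proposition \ref{prop:graded not noetherian}) does not descend to $U(\g)$, since noetherianity passes from an associated graded ring to the filtered ring but not in the direction you would need. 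The paper's actual tool for attacking the remaining case is condition $(*)$ of Section \ref{section:condition (*)}, which you allude to but do not verify for a general $\g$; verifying it (or condition $(\dagger)$) for arbitrary infinite-dimensional subalgebras is where any completion of this argument would have to go.
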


We are not able to prove Conjecture \ref{conj:subalgebras}. However, we make significant progress in understanding the structure of infinite-dimensional subalgebras of $\W$.

We first show that the above conjecture holds for graded subalgebras of $\W$. Furthermore, if $\g$ is a Lie subalgebra of $W_{\geq -1}$ of finite codimension, then $U(\g)$ is not noetherian by Proposition \ref{thm:intro finite codimension}. We therefore focus on (ungraded) infinite-dimensional subalgebras of $W_{\geq -1}$ of infinite codimension.

Veronese subalgebras of $\W$ (see Definition \ref{def:Veronese}) and their Lie subalgebras were the only known infinite-dimensional subalgebras of $W_{\geq -1}$ of infinite codimension. However, we give a new class of such subalgebras which are not, in general, contained in any Veronese. We denote these subalgebras by $L(f)$, where $f \in \kk[t]$. Given $f \in \kk[t]$, the subalgebra $L(f)$ is a $\kk[f]$-submodule of $\W$. We give two proofs that these subalgebras always have non-noetherian universal enveloping algebras. The first one is direct.

In Section \ref{subsec:contained in L(f)} we make partial progress on a classification of subalgebras of $\W$ by attempting to prove the following conjecture.

\begin{conj}\label{conj:intro classification}
    Let $\g$ be an infinite-dimensional subalgebra of $\W$. Then there exists $f \in \kk[t]$ such that $\g$ has finite codimension in $L(f)$.
\end{conj}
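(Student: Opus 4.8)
The plan is to start with an arbitrary infinite-dimensional subalgebra $\g \subseteq \W = \Der(\kk[t])$ and extract from it a canonical polynomial $f \in \kk[t]$ such that $\g \subseteq L(f)$ with finite codimension. Writing each element of $\W$ as $p(t)\,\del$ with $p \in \kk[t]$, I would first associate to $\g$ the set $P(\g) = \{p \in \kk[t] : p\,\del \in \g\}$. The key structural observation is that $P(\g)$ is closed under the Lie bracket operation induced on polynomials, namely $(p,q) \mapsto p q' - p' q$ (the Wronskian-type bracket), and more importantly that it carries a module structure over a subring of $\kk[t]$: if $g \in \kk[t]$ has the property that multiplication by $g$ preserves $\g$, then $P(\g)$ is a $\kk[g]$-module. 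The first main step is therefore to identify the largest such subring; I expect it to be of the form $\kk[f]$ for a single polynomial $f$ (using that finitely generated subalgebras of $\kk[t]$ containing a nonconstant polynomial are themselves of bounded codimension, via Lüroth-type or integral-closure arguments), and this $f$ is the canonical polynomial in the statement.

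The second step is to show $\g \subseteq L(f)$. This should follow from the definition of $L(f)$ as the $\kk[f]$-submodule of $\W$ generated by the natural vector fields attached to $f$ (for instance $\del$ scaled appropriately and $f'\del$, or whatever generating set Section \ref{subsec:contained in L(f)} uses); the point is that once $P(\g)$ is a $\kk[f]$-module, its generators as a $\kk[f]$-module must lie among the polynomials allowed in $L(f)$, because the bracket-closure condition forces divisibility constraints relating any $p \in P(\g)$ to $f$ and $f'$. Concretely, from $p\,\del, q\,\del \in \g$ one gets $(pq' - p'q)\,\del \in \g$, and iterating with a fixed nonconstant element produces enough relations to pin down the possible leading behaviour and low-degree terms of elements of $P(\g)$, matching exactly the shape of $L(f)$.

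The third step is the finite-codimension claim: $\dim_\kk(L(f)/\g) < \infty$. Here I would argue that $L(f)/\g$, being a quotient of $\kk[f]$-modules where $L(f)$ is finitely generated over $\kk[f]$, is finitely generated over $\kk[f]$; and then use that it is also a torsion $\kk[f]$-module — any $p\,\del \in L(f)$ has some $\kk[f]$-multiple landing in $\g$, because $\g$ being infinite-dimensional means $P(\g)$ contains polynomials of arbitrarily large degree, and the bracket of such a high-degree element with $p\,\del$ can be arranged (after multiplying by suitable powers of $f$) to recover $p\,\del$ modulo lower-order terms, allowing a descending induction on degree. A finitely generated torsion module over the PID-like ring $\kk[f] \cong \kk[x]$ is finite-dimensional over $\kk$, giving the result.

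The main obstacle I anticipate is the torsion step: showing that \emph{every} element of $L(f)$ is killed modulo $\g$ by a nonzero element of $\kk[f]$. The bracket $p q' - p' q$ lowers the "size" of $q$ relative to $p$ in a way that is delicate when $p$ and $q$ share factors with $f$ or $f'$, and one must handle the finitely many low-degree exceptional elements separately — indeed finite codimension (rather than equality) is exactly what one expects, so some genuine exceptional behaviour is unavoidable and must be absorbed into the finite-dimensional error term. Controlling this uniformly, rather than the mere existence of the canonical $f$, is where the real work lies, and it is presumably why the authors state this as a conjecture on which they make only "partial progress."
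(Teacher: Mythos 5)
The statement you are trying to prove is stated in the paper as a \emph{conjecture}: the author explicitly does not prove it, and only establishes two partial results. Theorem \ref{thm:subfield generated by R(g)} shows (via Lüroth's theorem applied to the field of ratios $F(\g)$, followed by divisibility arguments) that there is a canonical polynomial $f$ with $F(\g)=\kk(f)$ and $\g \subseteq L(f)$; Theorem \ref{thm:S is a field?} shows that $\g$ has finite codimension in $L(f)$ \emph{if and only if} the set of ratios $R(\g)$ is actually a field (equivalently, some pair $w\del,u\del\in\g$ realises $f=w/u$ as an honest ratio, which then feeds into Lemma \ref{lem:gcd} to produce elements of every sufficiently large degree). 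Whether $R(\g)$ is always a field is exactly the open part. So any complete proof of the statement must close that gap, and yours does not.

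The concrete flaw is in your Steps 1 and 3. You propose to find the largest subring $\kk[g]\subseteq\kk[t]$ such that multiplication by $g$ preserves $\g$, and then to treat $L(f)/\g$ as a finitely generated torsion module over $\kk[f]$. But for a general infinite-dimensional subalgebra there is no reason that any nonconstant $g$ satisfies $g\cdot\g\subseteq\g$; the set of such $g$ may well be just $\kk$, in which case your "largest subring" is trivial and the module structure you need never materialises. Note that $L(f)=\kk[f]g_f\del$ is a cyclic $\kk[f]$-module, and the containment $\g\subseteq L(f)$ (which the paper does prove) says nothing about $\g$ being a $\kk[f]$-\emph{sub}module; if it is not, then $L(f)/\g$ is not a $\kk[f]$-module at all, and the "finitely generated torsion module over a PID is finite-dimensional" argument has nothing to apply to. Your claim that "any $p\del\in L(f)$ has some $\kk[f]$-multiple landing in $\g$" is essentially a restatement of the conjecture, not a step toward it: producing, from the bracket $(p,q)\mapsto pq'-p'q$ alone, elements of $\g$ of every sufficiently large $f$-degree is precisely what the paper can only do under the extra hypothesis that $R(\g)$ is a field. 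You correctly identify the torsion step as the obstacle, and that diagnosis matches the paper's own; but identifying the obstacle is not the same as overcoming it, so the proposal does not constitute a proof.
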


Subalgebras of finite codimension in $\W$ have already been classified in \cite{PetukhovSierra}; it is known that they satisfy Conjecture \ref{conj:intro classification}. Therefore, it remains to prove the conjecture for subalgebras of $\W$ of infinite codimension. 

Note that if Conjecture \ref{conj:intro classification} holds, then so does Conjecture \ref{conj:subalgebras}. We tackle Conjecture \ref{conj:intro classification} by introducing two closely related invariants of subalgebras of $\W$. The first one is the \emph{set of ratios} of $\g$, defined as follows:
$$R(\g) = \left\{\frac{w}{u} \mid w\del,u\del \in \g, u \neq 0\right\}.$$
The second invariant is the \emph{field of ratios} $F(\g) = \kk(R(\g))$, the subfield of $\kk(t)$ generated by $R(\g)$.

We first consider the field of ratios, and prove the following result:

\begin{thm}[Theorem \ref{thm:subfield generated by R(g)}]\label{thm:intro field of ratios}
    Let $\g$ be an infinite-dimensional Lie subalgebra of $\W$. Then there exists some $f \in \kk[t] \setminus \{0\}$ such that:
    \begin{enumerate}
        \item The field of ratios $F(\g)$ is generated by $f$.
        \item The subalgebra $\g$ is contained in $L(f)$.
    \end{enumerate}
\end{thm}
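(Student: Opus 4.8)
The plan is to pin down the field $F(\g)$ using Lüroth's theorem and then deform its generator to a polynomial; conclusion $(2)$ will follow formally from $(1)$ together with a bracket computation. Since $\g$ is infinite-dimensional it contains two linearly independent elements $w\del,u\del$, so $w/u\in R(\g)\setminus\kk$; hence $F(\g)$ is an intermediate field $\kk\subsetneq F(\g)\subseteq\kk(t)$, and by Lüroth's theorem $F(\g)=\kk(h)$ for some $h\in\kk(t)$ transcendental over $\kk$, with $[\kk(t):\kk(h)]<\infty$. The two conclusions then amount to: $(1)$ $\kk(h)$ is generated over $\kk$ by a polynomial, and $(2)$ $\g\subseteq L(f)$ for a suitable polynomial generator $f$.

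The engine behind both parts is a bracket identity: \emph{for every $w\del\in\g$ one has $wh'\in\kk(h)$.} To see it, take a nonzero $w\del\in\g$ and (using $\dim_\kk\g\ge 2$) pick $u\del\in\g$ not proportional to $w\del$; writing $u=\sigma w$ with $\sigma=u/w\in R(\g)\subseteq\kk(h)$, a one-line computation gives $[w\del,u\del]=(wu'-uw')\del=\sigma' w^2\del\in\g$, whence $\sigma' w=(\sigma' w^2)/w\in R(\g)\subseteq\kk(h)$. Since $\sigma\notin\kk$, writing $\sigma=\theta(h)$ for a nonconstant rational function $\theta$ gives $\sigma'=\theta'(h)h'$ with $\theta'(h)\in\kk(h)\setminus\{0\}$; dividing, $wh'\in\kk(h)$.

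For $(1)$ I would argue by contradiction. Let $c_0:=h(\infty)\in\kk\cup\{\infty\}$. If $t=\infty$ is the only point of $\PP^1_{\overline\kk}$ at which $h$ takes the value $c_0$, then after a linear fractional change of coordinate (which does not change $\kk(h)$) we may assume $c_0=\infty$, so that $h$ has no pole at any finite point, i.e. $h\in\kk[t]$, and we take $f:=h$. Suppose instead there is $\beta_0\in\overline\kk$ with $h(\beta_0)=c_0=h(\infty)$. Fix a nonzero $w\del\in\g$ and write $wh'=g(h)$ with $g$ a nonzero rational function, using the bracket identity. By the chain rule for orders of vanishing, $\operatorname{ord}_\alpha(g(h))=e_\alpha\operatorname{ord}_{h(\alpha)}(g)$, where $e_\alpha\ge 1$ is the ramification index of $h$ at $\alpha$; applying this at $\beta_0$ and at $\infty$ and using $h(\beta_0)=h(\infty)$ gives
\[
\frac{\operatorname{ord}_{\beta_0}(wh')}{e_{\beta_0}}=\operatorname{ord}_{c_0}(g)=\frac{\operatorname{ord}_{\infty}(wh')}{e_{\infty}}.
\]
Since $w$ is a nonzero polynomial, $\operatorname{ord}_\infty(wh')=-\deg w+\operatorname{ord}_\infty(h')$ and $\operatorname{ord}_{\beta_0}(wh')\ge\operatorname{ord}_{\beta_0}(h')$; substituting into the displayed equality and rearranging yields
\[
\deg w\le\operatorname{ord}_\infty(h')-\frac{e_\infty}{e_{\beta_0}}\operatorname{ord}_{\beta_0}(h'),
\]
a bound independent of $w$. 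Hence $\{\deg w\mid w\del\in\g\}$ is bounded, forcing $\dim_\kk\g<\infty$ --- a contradiction. This degree-bound step --- extracting finite-dimensionality from the failure of the desired pole structure at infinity --- is the part I expect to be the main obstacle; everything else is bookkeeping.

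For $(2)$, let $f$ be the polynomial generator produced in $(1)$. Since $\kk(f)=\kk(h)$, we have $f=\lambda(h)$ for a linear fractional $\lambda$, so $f'=\lambda'(h)h'$ with $\lambda'(h)\in\kk(h)$. For any $w\del\in\g$ the bracket identity gives $wf'=(wh')\lambda'(h)\in\kk(h)=\kk(f)$, and also $wf'\in\kk[t]$; but $\kk(f)\cap\kk[t]=\kk[f]$ --- if $p(f)/q(f)\in\kk[t]$ with $p,q$ coprime in $\kk[x]$, then $p(f),q(f)$ are coprime in $\kk[t]$ while $q(f)\mid p(f)$, forcing $q$ to be constant --- so $wf'\in\kk[f]$, i.e. $w\del\in L(f)$. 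Hence $\g\subseteq L(f)$, as required.
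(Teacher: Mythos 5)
Your proof is correct, but it takes a genuinely different route from the paper's. Both arguments open with L\"uroth's theorem and then diverge. The paper normalises the L\"uroth generator $f = a/b$ so that $\deg a > \deg b$ and runs a divisibility analysis (the relation $p(f)\,\#\,g$, coprimality of $b^n p(f)$ and $b^n q(f)$, and the decomposition $w = p(f)g$ with $g \notin \kk[f]_\#$) to show that all of $\g$ sits inside $\kk[f]g\del$ for a single $g$; infinite-dimensionality then forces the denominator $b$ to be constant by a degree count, and a final bracket computation gives $f'g \in \kk(f)\cap\kk[t] = \kk[f]$. You instead isolate the identity $wh' \in \kk(h)$ for every $w\del \in \g$ --- proved by essentially the same bracket computation, but deployed at the start rather than the end --- and then use ramification of $h \colon \PP^1 \to \PP^1$: if the fibre of $h$ over $h(\infty)$ contains a finite point $\beta_0$, comparing orders of $wh' = g_w(h)$ at $\beta_0$ and at $\infty$ via $\mathrm{ord}_\alpha(g_w(h)) = e_\alpha\,\mathrm{ord}_{h(\alpha)}(g_w)$ bounds $\deg w$ independently of $w$, contradicting $\dim_\kk\g = \infty$; otherwise $h$ becomes a polynomial after a M\"obius change. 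Part (2) then falls out of $wf' \in \kk(f)\cap\kk[t] = \kk[f]$. Your version is shorter and bypasses the $\#$-machinery entirely, at the price of invoking ramification indices on $\PP^1$; the paper's version produces the extra structural fact $\g \subseteq \kk[f]g\del$ for one fixed $g$, which is closer in shape to Conjecture \ref{conj:classification}, and is stated for an arbitrary intermediate field $K \supseteq R(\g)$ (though your bracket identity only uses $R(\g) \subseteq \kk(h)$, so it adapts to that generality without change). Two small presentational points: the final step ``$wf' \in \kk[f]$, i.e.\ $w\del \in L(f)$'' silently invokes the ``furthermore'' clause of Proposition \ref{prop:L(f)} and should cite it, and the degree-bound argument should be phrased as applying to each nonzero $w\del \in \g$ separately (each with its own $g_w$), which is clearly what you intend.
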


We further believe that $\g$ has finite codimension in the subalgebra $L(f)$ from Theorem \ref{thm:intro field of ratios}, but are not able to prove it.

The proof of Theorem \ref{thm:intro field of ratios} goes as follows: by L\"uroth's theorem there exists $f \in \kk(t)$ such that $F(\g) = \kk(f)$. We first show that $f$ can be chosen to be a polynomial, and then deduce that $\g \subseteq L(f)$.

We then move on to the set of ratios $R(\g)$. In particular, we consider the situation where $R(\g)$ is a field:

\begin{thm}[Theorem \ref{thm:S is a field?}]\label{thm:intro equivalent conditions}
    Let $\g$ be an infinite-dimensional Lie subalgebra of $\W$. The following are equivalent:
    \begin{enumerate}
        \item[(1)] There exists a nonzero $f \in \kk[t]$ such that $\g$ has finite codimension in $L(f)$.
        \item[(2)] The set $R(\g)$ is a field.
        \item[(3)] There exists a nonzero $f \in \kk[t]$ such that $R(\g) = \kk(f)$.
    \end{enumerate}
\end{thm}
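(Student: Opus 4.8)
The plan is to prove the chain of implications $(1) \Rightarrow (3) \Rightarrow (2) \Rightarrow (1)$, since $(1) \Rightarrow (3)$ and $(3) \Rightarrow (2)$ should be comparatively soft and the substance lies in $(2) \Rightarrow (1)$.

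For $(1) \Rightarrow (3)$: assuming $\g$ has finite codimension in $L(f)$, I would first observe that $R(L(f)) = \kk(f)$ (this should be essentially the definition of $L(f)$ as a $\kk[f]$-submodule of $\W$, together with the fact that $L(f)$ contains enough elements — e.g. $f^k \del$ and $f^k f' \del$ — to realise every element of $\kk(f)$ as a ratio). Then I need that passing to a finite-codimension subalgebra does not shrink the set of ratios below $\kk(f)$: given $w\del, u\del \in L(f)$ with $u \neq 0$, for any sufficiently large $N$ the elements $f^N w \del$ and $f^N u \del$ lie in $\g$ (since $\g \supseteq f^M L(f)$ for some $M$, as a finite-codimension $\kk[f]$-submodule — here I would lean on the fact that finite-codimensional Lie subalgebras of $L(f)$ are controlled, analogously to the $\W$ case of \cite{PetukhovSierra}), and their ratio is still $w/u$. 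Hence $R(\g) = \kk(f)$, and in particular $\g$ is infinite-dimensional and Theorem \ref{thm:intro field of ratios} is consistent with this $f$.

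The implication $(3) \Rightarrow (2)$ is immediate since $\kk(f)$ is a field. For $(2) \Rightarrow (1)$, suppose $R(\g)$ is a field. By Theorem \ref{thm:intro field of ratios} there is $f \in \kk[t] \setminus \{0\}$ with $F(\g) = \kk(R(\g)) = \kk(f)$ and $\g \subseteq L(f)$; since $R(\g)$ is already a field containing $1$ (as $\g$ contains some $u\del \neq 0$, giving $u/u = 1 \in R(\g)$) and it generates $\kk(f)$ as a field, we get $R(\g) = \kk(f)$. It remains to upgrade ``$\g \subseteq L(f)$ with full set of ratios $\kk(f)$'' to ``$\g$ has finite codimension in $L(f)$''. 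Here is where I expect the main obstacle. The idea is: pick $u_0 \del \in \g$ with $u_0 \neq 0$; for any $w \del \in L(f)$ we have $w/u_0 \in \kk(f)$, so $w/u_0 = p(f)/q(f)$ for polynomials $p, q$, and since $R(\g) = \kk(f)$ we can find $a\del, b\del \in \g$ with $a/b = q(f)/p(f) \cdot (\text{correction})$ — more carefully, I want to show that $\g$, as a $\kk[f]$-module, contains $f^M L(f)$ for some $M$ depending only on $\g$. To do this I would use the Lie bracket: for $u\del, v\del \in \g$, $[u\del, v\del] = (uv' - u'v)\del \in \g$, and dividing through, the ratios multiply/differentiate in a controlled way, so iterating brackets against a fixed nonzero element of $\g$ and using that all ratios land in $\kk(f)$ forces a common ``denominator bound'' — i.e. there is a single power $f^M$ such that $f^M w \del \in \g$ whenever $w \del \in L(f)$. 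Establishing this uniform bound is the crux; once it is in hand, $\g \supseteq f^M L(f)$, and since $L(f)/f^M L(f)$ is finite-dimensional (as $L(f)$ is a finitely generated $\kk[f]$-module and $\kk[f]/f^M\kk[f]$ is finite-dimensional), $\g$ has finite codimension in $L(f)$, giving $(1)$.

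The main difficulty, then, is the last step: converting the pointwise information ``every ratio of elements of $\g$ lies in $\kk(f)$'' into the uniform statement ``$\g$ contains $f^M L(f)$ for some fixed $M$''. I expect this to require a careful analysis of how the bracket interacts with the $\kk[f]$-module structure of $L(f)$ — in particular using that $[p(f)u_0\del, p(f)u_0\del\,] = 0$ forces degeneracies, while brackets of the form $[f^i u_0 \del, f^j v_0 \del]$ generate, over $\kk[f]$, a large submodule — together with the classification-type input that finite-codimensional $\kk[f]$-submodules of $L(f)$ stable under the bracket are exactly those containing some $f^M L(f)$. If a clean direct argument proves elusive, a fallback is to first show $\g$ contains $u_0 \del$ and $f u_0 \del$ for suitable $u_0$ (using $R(\g) = \kk(f)$), then bootstrap via repeated bracketing with these two elements to fill out a finite-codimensional piece of $L(f)$.
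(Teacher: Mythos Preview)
Your outline for $(1)\Rightarrow(3)$ and $(3)\Rightarrow(2)$ matches the paper's, modulo a small slip: $L(f)=\kk[f]g_f\del$, so $f^k\del$ and $f^kf'\del$ are generally \emph{not} in $L(f)$; the ratios you actually want are $p(f)g_f/q(f)g_f=p(f)/q(f)$. For $(1)\Rightarrow(3)$ the paper does exactly what you propose: transport $\g$ through the isomorphism $L(f)\cong\W(h)$ of Lemma~\ref{lem:infinite codimension subalgebra} and apply the Petukhov--Sierra classification to obtain $L(f,q(f)g_f)\subseteq\g$, whence $R(\g)=\kk(f)$.

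The gap is in $(2)\Rightarrow(1)$. Your primary strategy --- establishing a uniform $M$ with $f^ML(f)\subseteq\g$ --- is stronger than finite codimension and you give no mechanism for producing that uniform bound; the bracket identities and $\kk[f]$-module considerations you cite do not obviously yield it without already knowing something equivalent to~(1). The paper bypasses this with a short degree argument. Since $R(\g)=\kk(f)$, in particular $f\in R(\g)$, so there exist $u\del,w\del\in\g$ with $w=fu$. Then $\deg(w\del)-\deg(u\del)=\deg f$, while both degrees are multiples of $\deg f$ because $\g\subseteq L(f)$ and every element of $L(f)$ has degree divisible by $\deg f$ (Remark~\ref{rem:degree of L(f,g)}). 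Hence $\gcd(\deg(w\del),\deg(u\del))=\deg f$, and the elementary gcd Lemma~\ref{lem:gcd} manufactures elements of $\g$ of degree $k\deg f$ for all $k\gg0$. Since $L(f)$ has a basis $\{f^ng_f\del\}_{n\geq0}$ with pairwise distinct degrees, this gives finite codimension immediately. Your ``fallback'' --- find $u_0\del$ and $fu_0\del$ in $\g$, then bootstrap by bracketing --- is exactly this argument in embryo; it is the main line, not the backup, and the only missing ingredient is the gcd/degree lemma.
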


The proof of Theorem \ref{thm:intro equivalent conditions} follows by applying Theorem \ref{thm:intro field of ratios} and an analogous result to \cite[Proposition 3.19]{PetukhovSierra} for subalgebras of $\W$. As an immediate consequence of Theorem \ref{thm:intro equivalent conditions}, we get:

\begin{cor}[Corollary \ref{cor:R(g) is a field}]
    Let $\g$ be an infinite-dimensional Lie subalgebra of $\W$. If $R(\g)$ is a field then $U(\g)$ is not noetherian.
\end{cor}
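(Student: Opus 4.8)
The plan is to derive the corollary directly from Theorem~\ref{thm:intro equivalent conditions} together with the earlier results on $L(f)$. Suppose $\g$ is an infinite-dimensional Lie subalgebra of $\W$ with $R(\g)$ a field. By the equivalence (2)$\Leftrightarrow$(1) of Theorem~\ref{thm:intro equivalent conditions}, there exists a nonzero $f \in \kk[t]$ such that $\g$ has finite codimension in $L(f)$. By Proposition~\ref{thm:intro finite codimension} (finite-codimensional subalgebras have equi-noetherian enveloping algebras), it therefore suffices to show that $U(L(f))$ is not noetherian. But this is exactly the statement established earlier in the paper: the subalgebras $L(f)$ always have non-noetherian universal enveloping algebras (proved directly, as promised in the introduction). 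Hence $U(\g)$ is not noetherian.

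In more detail, I would write the argument as a two-line chain of implications: $R(\g)$ a field $\Rightarrow$ ($\exists\, f$) $\g$ has finite codimension in $L(f)$ $\Rightarrow$ $U(\g)$ is noetherian iff $U(L(f))$ is noetherian $\Rightarrow$ ($U(L(f))$ being non-noetherian) $U(\g)$ is not noetherian. Each step is a citation to a result already available: the first to Theorem~\ref{thm:intro equivalent conditions}, the second to Proposition~\ref{thm:intro finite codimension}, and the third to the result that $U(L(f))$ is never noetherian. One should mention that $L(f)$ is itself infinite-dimensional (so the hypotheses of those results apply); this is immediate since $\kk[f]\partial \subseteq L(f)$ and $\kk[f]$ is infinite-dimensional over $\kk$ whenever $f$ is nonconstant, while if $f$ is constant then $R(\g) = \kk(f) = \kk$ forces $\g$ to be spanned by a single element up to scalars, contradicting $\dim_\kk \g = \infty$; so $f$ is nonconstant.

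There is essentially no obstacle here — the corollary is a formal consequence, and all the real work has been done in Theorems~\ref{thm:intro field of ratios} and~\ref{thm:intro equivalent conditions}, in Proposition~\ref{thm:intro finite codimension}, and in the construction and analysis of the algebras $L(f)$. The only point requiring a modicum of care is to make sure the finite-codimensional inclusion $\g \subseteq L(f)$ really does allow the application of Proposition~\ref{thm:intro finite codimension}: that proposition is stated for an inclusion $\h \subseteq \g$ of Lie algebras with $\dim_\kk(\g/\h) < \infty$, which is precisely our situation with $\h = \g$ and $\g$ replaced by $L(f)$. So the proof is a one-paragraph deduction, and its correctness rests entirely on quoting the right results in the right order.
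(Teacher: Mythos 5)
Your proof is correct and follows exactly the chain the paper intends: the paper states this corollary as an immediate consequence of Theorem \ref{thm:S is a field?}, Proposition \ref{prop:finite codimension}, and Proposition \ref{prop:U(L(f,g) not noetherian}, with no further argument given. Your extra remark checking that $f$ is nonconstant (so that $L(f)$ is infinite-dimensional and Proposition \ref{prop:U(L(f,g) not noetherian} applies) is a small point of care the paper leaves implicit, but it is handled correctly.
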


If we do not assume that $R(\g)$ is a field, then it is not clear is whether $\g$ has finite codimension in $L(f)$, or whether $U(\g)$ can be noetherian in this case.

We finish the paper by finding a sufficient condition on a subalgebra $\g \subseteq \W$, which we call $(*)$, for $U(\g)$ to be non-noetherian. This condition is defined by analysing the image of $U(\g)$ under a certain homomorphism
$$\Phi \colon U(\W) \to A_1(\kk(y))[\inv{t}].$$
Here, $A_1(\kk(y)) = \kk(y)[t,\del]$ denotes the first Weyl algebra over $\kk(y)$. All known infinite-dimensional subalgebras of $\W$ satisfy $(*)$; in particular, we show that the subalgebras $L(f)$ satisfy $(*)$, providing a second proof of the non-noetherianity of $U(L(f))$. We believe that condition $(*)$ will be useful in future work on Conjecture \ref{conj:subalgebras}.

\smallskip

\noindent \textbf{Acknowledgements:} This work was done as part of the author's PhD research at the University of Edinburgh.

We thank Pavel Etingof for extensive comments and useful discussion on an earlier version of the paper. We also thank Jason Bell for useful comments and discussion.

\section{Lie subalgebras of finite codimension}\label{sec:finite codimension}

If $\h$ is a Lie subalgebra of $\g$ and $U(\g)$ is noetherian, then so is $U(\h)$ \cite[Lemma 1.7]{SierraWalton}, which is a consequence of the faithful flatness of $U(\g)$ over $U(\h)$. It is much less clear when the converse implication holds. The main purpose of this section is to prove that when $\h$ has finite codimension in $\g$, then the noetherianity of $U(\h)$ is equivalent to the noetherianity of $U(\g)$. We will use this result extensively in later sections.

\begin{prop}\label{prop:finite codimension}
    Let $\h \subseteq \g$ be Lie algebras such that $\h$ has finite codimension in $\g$. Then $U(\h)$ is noetherian if and only if $U(\g)$ is noetherian.
\end{prop}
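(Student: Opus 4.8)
The easy direction is already noted in the text: if $U(\g)$ is noetherian then $U(\h)$ is noetherian, since $U(\g)$ is faithfully flat over $U(\h)$. So the plan is to prove the converse: assuming $U(\h)$ is noetherian, deduce that $U(\g)$ is noetherian. The strategy is to equip $U(\g)$ with a positive (exhaustive, $\NN$-indexed) filtration whose associated graded ring $\gr U(\g)$ is noetherian; since a filtered ring with noetherian associated graded ring is itself noetherian, this suffices.

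To build the filtration, first reduce to the case where $\h$ is an \emph{ideal} of $\g$: replace $\h$ by the largest ideal of $\g$ contained in $\h$ (the ``core''), which still has finite codimension since $\g$ acts on the finite-dimensional space $\g/\h$ and the kernel of this action is an ideal of finite codimension sitting inside $\h$; by the faithful-flatness direction, $U$ of this smaller subalgebra is noetherian iff $U(\h)$ is, so we may assume $\h \trianglelefteq \g$ with $\dim_\kk(\g/\h) = n < \infty$. Pick a $\kk$-basis $x_1, \dots, x_n$ of a vector-space complement of $\h$ in $\g$. Now filter $U(\g)$ by placing $U(\h)$ in degree $0$ and each $x_i$ in degree $1$: concretely, $F_m U(\g)$ is the $U(\h)$-submodule spanned by all monomials $x_{i_1} \cdots x_{i_k}$ with $k \le m$. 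Because $\h$ is an ideal, $[\h, x_i] \subseteq \g$, and more importantly $[x_i, x_j] \in \g = \h \oplus \bigoplus \kk x_\ell$, so commuting two degree-$1$ elements past each other produces terms of degree $\le 1$ plus a $U(\h)$-coefficient; one checks this makes $\{F_m\}$ a genuine ring filtration with $F_0 = U(\h)$. By the PBW theorem, $\gr U(\g)$ is, as a left $U(\h)$-module, free on the commuting monomials in the images $\bar x_1, \dots, \bar x_n$; the key point is that in $\gr U(\g)$ the images of the $x_i$ commute with each other (their commutator dropped in degree) and the commutator $[\bar x_i, h]$ for $h \in U(\h)$ is controlled — since $[x_i, \h] \subseteq \h + \sum \kk x_\ell$, one must check that the degree-$1$-preserving part of $\ad x_i$ on $U(\h)$ makes $\gr U(\g)$ an iterated Ore extension $U(\h)[\bar x_1; \sigma_1, \delta_1]\cdots[\bar x_n; \sigma_n, \delta_n]$ of $U(\h)$.

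The main obstacle — and the step deserving the most care — is verifying that $\gr U(\g)$ genuinely has this iterated Ore extension structure over $U(\h)$, i.e. that each $\bar x_i$ acts on the previous subring by a well-defined skew derivation with respect to a filtration-compatible automorphism $\sigma_i$ (here in fact $\sigma_i = \id$ since we are in the graded/commutative-in-the-$x$'s setting, so it is an iterated \emph{differential} operator extension), with $\delta_i$ the derivation induced by $\ad x_i$ truncated to its degree-$0$ component. Once this is established, the Hilbert basis theorem for Ore extensions (a differential operator ring $R[x; \delta]$ over a noetherian ring $R$ is noetherian) applies $n$ times, starting from the noetherian ring $U(\h)$, to give that $\gr U(\g)$ is noetherian. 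Finally, invoking the standard fact that a ring carrying an exhaustive positive filtration with noetherian associated graded ring is noetherian (lift a generating set for the leading-term ideal), we conclude $U(\g)$ is noetherian, completing the equivalence.
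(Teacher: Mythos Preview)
Your reduction to the case where $\h$ is an ideal of $\g$ is invalid, and this is a genuine gap. The claim that ``$\g$ acts on the finite-dimensional space $\g/\h$'' is false in general: $\g/\h$ carries an $\h$-action, not a $\g$-action, unless $\h$ is already an ideal. Consequently the largest ideal of $\g$ contained in $\h$ need not have finite codimension. A concrete counterexample relevant to this paper is $\g = \W$ and $\h = W_{\geq 0}$: here $\h$ has codimension $1$, but if $I \subseteq \h$ is a nonzero ideal of $\g$ then applying $\ad e_{-1}$ repeatedly to any nonzero element of $I$ eventually produces an element with nonzero $e_{-1}$-component, contradicting $I \subseteq \h$. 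Thus the only ideal of $\g$ inside $\h$ is zero, and your ``core'' has infinite codimension.

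The paper avoids this by working instead with $\mf{k} = \{k \in \h \mid [k,\g] \subseteq \h\}$, which is an ideal of $\h$ (not of $\g$) of finite codimension in $\h$. One then filters $U(\g)$ with $U(\mf{k})$ in degree $0$, a basis of $\h/\mf{k}$ in degree $1$, and a basis of $\g/\h$ in degree $2$. The three-step filtration is the essential idea you are missing: the conditions $[\mf{k},\h] \subseteq \mf{k}$ and $[\mf{k},\g] \subseteq \h$ force all the new generators to be \emph{central} in the associated graded ring, which becomes a genuine polynomial ring $U(\mf{k})[\bar x_1,\ldots,\bar x_n,\bar y_1,\ldots,\bar y_m]$, so Hilbert's basis theorem applies directly (no Ore extensions are needed). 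Your two-level filtration would indeed work if $\h$ were an ideal of $\g$, and would even give a polynomial ring rather than an Ore extension since then $[x_i,\h]\subseteq\h$ lies in degree $0$; but the passage to that situation is precisely the step that fails.
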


The key technique in the proof is to construct a filtration of $U(\g)$ whose associated graded ring is noetherian if $U(\h)$ is noetherian\footnote{The idea of this proof is due to Pavel Etingof, in personal correspondence with Susan Sierra and Chelsea Walton. We thank Prof. Etingof for allowing us to include it.}. This filtration will be constructed with the help of the following useful ideal of $\h$.

\begin{lem}\label{lem:k}
    Let $\h \subseteq \g$ be Lie algebras such that $\h$ has finite codimension in $\g$. Then $$\mf{k} = \{k \in \h \mid [k,x] \in \h \ \text{for all} \ x \in \g\}$$ is an ideal of $\h$ of finite codimension.
\end{lem}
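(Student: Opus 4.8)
The plan is to exhibit $\mf{k}$ as the kernel of a finite-dimensional representation of $\h$, from which both assertions (being an ideal, having finite codimension) follow immediately.

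First I would observe that the adjoint action of $\g$ on itself descends to a well-defined linear action of $\h$ on the finite-dimensional quotient space $V := \g/\h$, via $\rho(h)(x + \h) = [h,x] + \h$ for $h \in \h$, $x \in \g$. This is well-defined precisely because $\h$ is a Lie subalgebra, so $[h,\h] \subseteq \h$; and the Jacobi identity in $\g$, reduced modulo $\h$, shows that $\rho \colon \h \to \End_\kk(V)$ is a homomorphism of Lie algebras. (Explicitly, $[[h_1,h_2],x] = [h_1,[h_2,x]] - [h_2,[h_1,x]]$ becomes $\rho([h_1,h_2]) = \rho(h_1)\rho(h_2) - \rho(h_2)\rho(h_1)$ after passing to $V$.)

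Next I would simply unwind the definition: $k \in \h$ lies in $\mf{k}$ if and only if $[k,x] \in \h$ for all $x \in \g$, which says exactly that $\rho(k) = 0$. Hence $\mf{k} = \Ker \rho$. Being the kernel of a Lie algebra homomorphism, $\mf{k}$ is an ideal of $\h$. Finally, the first isomorphism theorem yields an embedding $\h/\mf{k} \hookrightarrow \End_\kk(V)$, and since $\dim_\kk V = \dim_\kk(\g/\h)$ is finite by hypothesis, $\dim_\kk \End_\kk(V) = (\dim_\kk V)^2 < \infty$; therefore $\h/\mf{k}$ is finite-dimensional, i.e.\ $\mf{k}$ has finite codimension in $\h$.

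There is no real obstacle in this argument; the only point that warrants a line of verification is that $\rho$ is well-defined on the quotient and respects brackets, both of which are routine consequences of $\h$ being a subalgebra and of the Jacobi identity. It is worth noting for later use that $\mf{k}$ need not be an ideal of $\g$, only of $\h$, which is all that is required for the filtration constructed in the proof of Proposition~\ref{prop:finite codimension}.
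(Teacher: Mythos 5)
Your proof is correct and is essentially the same as the paper's: both realise $\mf{k}$ as the kernel of the map $\h \to \mf{gl}(\g/\h)$ induced by the adjoint action, so that $\mf{k}$ is an ideal and $\h/\mf{k}$ embeds into a finite-dimensional Lie algebra. Your version just spells out the well-definedness and the Jacobi-identity check a bit more explicitly.
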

\begin{proof}
    Consider the linear map
    \begin{align*}
        \varphi \colon \h &\to \mf{gl}(\g/\h) \\
        h &\mapsto \varphi_h
    \end{align*}
    induced by the adjoint action of $\h$ on $\g$. In other words, $$\varphi_h(x + \h) = [h,x] + \h$$
    for all $x \in \g$. We claim that $\varphi$ is a well-defined Lie algebra homomorphism. Consider the adjoint map
    \begin{align*}
        \ad \colon \h &\to \mf{gl}(\g) \\
        h &\mapsto \ad_h.
    \end{align*}
    Then $\varphi_h(x + \h) = \ad_h(x) + \h$ for all $h \in \h$. As $\ad_h$ preserves $\h$ for $h \in \h$, it follows that $\varphi$ is well-defined, as claimed.
    
    Note that $\mf{k} = \ker(\varphi)$, so $\mf{k}$ is an ideal of $\h$. Since $\mf{gl}(\g/\h)$ is finite-dimensional, it follows that $\h/\mf{k} \cong \varphi(\h) \subseteq \mf{gl}(\g/\h)$ is finite-dimensional.
\end{proof}

We now prove Proposition \ref{prop:finite codimension}.

\begin{proof}[Proof of Proposition \ref{prop:finite codimension}]
    By \cite[Lemma 1.7]{SierraWalton}, it suffices to prove that if $U(\h)$ is noetherian so is $U(\g)$.
    
    Suppose $U(\h)$ is noetherian. Let $\mf{k} \subseteq \h$ be as in Lemma \ref{lem:k} and let $x_1 + \mf{k}, \ldots, x_n + \mf{k}$ be a basis for $\h/\mf{k}$, where $x_i \in \h$. Similarly, let $y_1 + \h, \ldots, y_m + \h$ be a basis for $\g/\h$, where $y_i \in \g$. Define a filtration $\mc{F} = (F_i)_{i \in \NN}$ on $U(\g)$ as follows:
    $$F_i = \spn\{w x_{u_1} \ldots x_{u_r} y_{v_1} \ldots y_{v_s} \mid w \in U(\mf{k}), r + 2s \leq i\}.$$
    We claim that $F_i F_j \subseteq F_{i+j}$, so that $\mc{F}$ is indeed a filtration. First, we let $k \in \mf{k}$ and consider $x_\ell k$ for some $\ell$:
    $$x_\ell k = k x_\ell + [x_\ell, k]$$
    in $U(\mf{g})$. Note that $[x_\ell,k] \in \mf{k}$ since $\mf{k}$ is an ideal of $\mf{h}$. Therefore, $x_\ell k \in F_1$. Inductively, we see that $x_\ell w \in F_1$ for all $w \in U(\mf{k})$.
    
    Now consider $y_\ell k$:
    $$y_\ell k = k y_\ell + [y_\ell,k]$$
    in $U(\mf{g})$. Since $[y_\ell,k] \in \mf{h} \subseteq F_2$ and $k y_\ell \in F_2$, it follows that $y_\ell k \in F_2$. Inductively, we see that $y_\ell w \in F_2$ for all $w \in U(\mf{k})$.
    
    Finally, consider $y_\ell x_{\ell'}$ for some $\ell, \ell'$:
    $$y_\ell x_{\ell'} = x_{\ell'} y_\ell + [y_\ell, x_{\ell'}]$$
    in $U(\mf{g})$. It follows similarly to before that $y_\ell x_{\ell'} \in F_3$. By induction, $y_\ell x_{u_1} \ldots x_{u_r} \in F_{r+2}$.
    
    Let $z \in F_j$. Combining the above, we see that $x_\ell z \in F_{j+1}$ and $y_\ell z \in F_{j+2}$ for all $\ell$. Now, consider $z' \in F_i$ of the form
        $$z' = w x_{u_1} \ldots x_{u_r} y_{v_1} \ldots y_{v_s}, \quad r + 2s \leq i.$$
    Since $x_{u_r}z \in F_{j + 1}$ and $y_{v_s}z \in F_{j + 2}$, it now follows by an easy induction that $z'z \in F_{i + j}$. Therefore, $F_i F_j \subseteq F_{i+j}$, as claimed.
    
    Note that $F_0 = U(\mf{k})$, $x_i \in F_1 \setminus F_0$ and $y_i \in F_2 \setminus F_1$, so that $\h \subseteq F_1$ and $\g \subseteq F_2$. Consider the associated graded ring
    $$\gr_\mc{F} U(\g) = \bigoplus_{i \in \NN} F_i/F_{i-1}.$$
    Let $\overline{x_i} = x_i + F_0 \in \gr_\mc{F} U(\g)$ and $\overline{y_i} = y_i + F_1 \in \gr_\mc{F} U(\g)$. By definition of the filtration, it follows that $U(\mf{k}) \subseteq \gr_\mc{F} U(\g)$, and $\gr_\mc{F} U(\g)$ is generated by $U(\mf{k})$ and the $\overline{x_i}$ and $\overline{y_j}$ as a $\kk$-algebra. We have
    $$\overline{x_i} \ \overline{x_j} - \overline{x_j} \ \overline{x_i} = [x_i,x_j] + F_1.$$
    But $[x_i,x_j] \in \h \subseteq F_1$, so $[x_i,x_j] + F_1 = 0$ and therefore $\overline{x_i}$ and $\overline{x_j}$ commute in $\gr_\mc{F} U(\g)$ for all $i,j$. Similarly, we have
    $$\overline{y_i} \ \overline{y_j} - \overline{y_j} \ \overline{y_i} = [y_i,y_j] + F_3 = 0,$$
    $$\overline{x_i} \ \overline{y_j} - \overline{y_j} \ \overline{x_i} = [x_i,y_j] + F_2 = 0,$$
    since $[y_i,y_j] \in \g \subseteq F_2 \subseteq F_3$ and $[x_i,y_j] \in \g \subseteq F_2$.
    
    Now let $k \in \mf{k} \setminus \{0\} \subseteq F_0$. We have
    $$k\overline{x_i} - \overline{x_i}k = [k,x_i] + F_0 = 0,$$
    $$k\overline{y_i} - \overline{y_i}k = [k,y_i] + F_1 = 0,$$
    since $[k,x_i] \in \mf{k} \subseteq F_0$ by Lemma \ref{lem:k} and $[k,y_i] \in \h \subseteq F_1$ by definition of $\mf{k}$.
    
    Hence, the elements $\overline{x_i}$ and $\overline{y_j}$ are central in $\gr_\mc{F} U(\g)$, so we see that
    $$\gr_\mc{F} U(\g) = U(\mf{k})[\overline{x_1}, \ldots, \overline{x_n}, \overline{y_1}, \ldots, \overline{y_m}].$$
    Since $U(\h)$ is noetherian, it follows by \cite[Lemma 1.7]{SierraWalton} that $U(\mf{k})$ is noetherian. By Hilbert's basis theorem, $\gr_\mc{F} U(\g)$ is noetherian. Therefore, $U(\g)$ is noetherian since it has a positive filtration whose associated graded ring is noetherian.
\end{proof}

\section{Krichever-Novikov algebras}\label{sec:affine curves}

In this section, we focus on universal enveloping algebras of Krichever-Novikov algebras, which we define below. In order to define these, we first need to recall the notion of a derivation of an associative algebra.

\begin{dfn}
    Let $A$ be a $\kk$-algebra and let $C$ be an affine curve.
    \begin{enumerate}
        \item A \emph{$\kk$-derivation} of $A$ is a linear map $v \colon A \to A$ which satisfies the Leibniz rule
        $$v(ab) = av(b) + v(a)b$$
        for all $a, b \in A$. We denote the Lie algebra of $\kk$-derivations of $A$ by $\Der(A)$.
        \item A \emph{vector field on $C$} is a derivation of the coordinate ring $\kk[C]$.
        \item The \emph{Krichever-Novikov algebra on $C$}, denoted $\LL(C)$, is the Lie algebra of vector fields on $C$. In other words, $\LL(C) = \Der(\kk[C])$ with Lie bracket given by the commutator of derivations.
    \end{enumerate}
\end{dfn}

\begin{rem}
    The standard definition of a Krichever-Novikov algebra assumes that the underlying curve is nonsingular (see, for example, \cite{Schlichenmaier}). However, in this paper we also consider Lie algebras of vector fields on singular curves and also refer to them as Krichever-Novikov algebras.
\end{rem}

The goal of this section is to prove that universal enveloping algebras of Krichever-Novikov algebras are not noetherian, providing a new class of infinite-dimensional Lie algebras with non-noetherian enveloping algebras.

\begin{thm}\label{thm:main}
    Let $C$ be an affine curve. Then $U(\LL(C))$ is not noetherian.
\end{thm}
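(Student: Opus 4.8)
The plan is to reduce to finding, inside $\LL(C)$, a Lie subalgebra whose universal enveloping algebra is already known to be non-noetherian. By \cite[Lemma 1.7]{SierraWalton}, if $U(\LL(C))$ were noetherian then so would be $U(\h)$ for every Lie subalgebra $\h \subseteq \LL(C)$; contrapositively, it suffices to exhibit one subalgebra $\h \subseteq \LL(C)$ with $U(\h)$ non-noetherian. The natural candidates are finite-codimensional subalgebras of $\W = \Der(\kk[t])$, since $U(\W)$ is non-noetherian by Sierra--Walton, and by Proposition \ref{prop:finite codimension} any finite-codimensional subalgebra of $\W$ inherits this. So the crux is: embed a finite-codimensional subalgebra of $\W$ into $\LL(C)$ as a Lie algebra.

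First I would treat the case where $C$ is a nonsingular affine curve. Choose a dominant morphism $\pi \colon C \to \Aa^1$; on coordinate rings this is an injection $\kk[t] = \kk[\Aa^1] \hookrightarrow \kk[C]$ making $\kk[C]$ a finite extension of $\kk[t]$ (finite because $C$ is a curve, so the extension has transcendence degree $0$, and one can arrange module-finiteness by the normalisation/Noether normalisation picture, using that $C$ is nonsingular hence normal). The key geometric claim is that a vector field $v = g(t)\,\del_t$ on $\Aa^1$ lifts to a vector field $\widetilde v$ on $C$ precisely when $g$ vanishes on the (finite) branch locus $B \subset \Aa^1$ of $\pi$, and that the lift is unique. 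Uniqueness is because $\kk[C] \subseteq \kk(C)$ and a derivation of $\kk(C)$ over $\kk$ extending a given derivation of $\kk(t)$ is unique (separability of $\kk(C)/\kk(t)$ in characteristic $0$); existence on the level of function fields is automatic, and the vanishing-on-$B$ condition is exactly what guarantees the lifted derivation of $\kk(C)$ actually preserves $\kk[C]$ — one checks this locally at each point of $C$, the only possible obstruction being at points over the branch locus where $dt$ has a zero. The set $\h = \{\,g\,\del_t : g \in \kk[t],\ g|_B = 0\,\} \subseteq \W$ is a Lie subalgebra (it is the ideal $I_B \cdot \W$ of vector fields vanishing on $B$, closed under bracket since $[fv,gw]$ keeps a factor vanishing on $B$) of finite codimension $|B|$ in $\W$, and $v \mapsto \widetilde v$ is an injective Lie algebra homomorphism $\h \hookrightarrow \LL(C)$ because the bracket of derivations is compatible with the field extension. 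Then Proposition \ref{prop:finite codimension} gives that $U(\h)$ is non-noetherian, and \cite[Lemma 1.7]{SierraWalton} finishes this case.

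Next I would handle the singular case by passing to the normalisation $\nu \colon \widetilde C \to C$, with $\widetilde C$ a nonsingular affine curve and $\widetilde\LL = \LL(\widetilde C)$. Via $\nu^* \colon \kk[C] \hookrightarrow \kk[\widetilde C]$, identify $\kk[C]$ with a subring of $\kk[\widetilde C]$ of finite colength (the conductor ideal $\mf c = \Ann_{\kk[\widetilde C]}(\kk[\widetilde C]/\kk[C])$ is nonzero). A derivation of $\kk[\widetilde C]$ restricts to a derivation of $\kk[C]$ iff it sends $\kk[C]$ into itself; I would instead use that $\mf c \cdot \widetilde\LL$ — vector fields on $\widetilde C$ whose coefficients lie in $\mf c$ — both preserves $\kk[C]$ (so sits inside $\LL(C)$ after restriction) and contains, via the construction of the nonsingular case applied to $\widetilde C \to \Aa^1$, a finite-codimensional subalgebra of $\W$ (intersect the branch-locus condition with a further vanishing condition coming from $\mf c$, still cutting out only finitely many points). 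Concretely: find $h \in \kk[t]$ with $h$ vanishing on the branch locus of $\widetilde C \to \Aa^1$ and with $\widetilde h \in \mf c$ where $\widetilde h$ is the pullback; then $\{\,g\,\del_t : h \mid g\,\}$, a finite-codimensional subalgebra of $\W$, lifts into $\mf c \cdot \widetilde\LL \subseteq \LL(C)$. Apply Proposition \ref{prop:finite codimension} and \cite[Lemma 1.7]{SierraWalton} as before. The main obstacle I anticipate is the careful verification that the lifted derivation genuinely preserves the smaller coordinate ring — pinning down exactly which vanishing/divisibility condition on $g$ is needed so that $\widetilde{g\,\del_t}$ maps $\kk[C]$ (resp. $\kk[\widetilde C]$) into itself at the problematic points (ramification points, and for the singular case also the non-normal points) — and confirming that this condition still leaves a finite-codimensional subalgebra of $\W$; the Lie-algebra and injectivity bookkeeping is then routine.
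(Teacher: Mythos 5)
Your argument is correct, and the nonsingular case is essentially the paper's proof: the same dominant morphism $\pi\colon C\to\Aa^1$, the same lifting of vector fields that vanish on the branch locus (the paper's Proposition \ref{prop:extending derivations}, whose local verification at ramification points is exactly the ``obstruction where $dt$ has a zero'' you describe), followed by Proposition \ref{prop:finite codimension} and \cite[Lemma 1.7]{SierraWalton}. In the singular case both you and the paper use the conductor, but the reductions are organised differently. The paper never returns to $\W$: it shows $I\widetilde{\LL}$ is a Lie subalgebra of both $\LL(C)$ and $\widetilde{\LL}$ of \emph{finite codimension in $\widetilde{\LL}$} (using that both are rank-one torsion-free modules), so Proposition \ref{prop:finite codimension} applied inside $\widetilde{\LL}$ — whose enveloping algebra is already known to be non-noetherian from the nonsingular case — immediately gives the result, with no polynomial to choose. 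Your route instead pushes a finite-codimensional subalgebra of $\W$ all the way into $\mf{c}\,\widetilde{\LL}$; this works but needs two points made explicit. First, that $\mf{c}\cap\kk[t]\neq 0$ (true, by integrality of $\kk[\widetilde C]$ over $\kk[t]$, but worth a line). Second, and more importantly, the condition ``$h\mid g$ with $\widetilde h\in\mf{c}$ and $h$ vanishing on the branch locus'' does not by itself place $\widetilde{g\,\del_t}$ in $\mf{c}\,\widetilde{\LL}$: writing $g=hg'$ gives $\widetilde{g\,\del_t}=\widetilde h\cdot(g'\del_t)_K$, and the function-field extension $(g'\del_t)_K$ need not preserve $\kk[\widetilde C]$. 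You should instead take $h=h_0f$ with $\widetilde{h_0}\in\mf{c}$ and $f$ cutting out the branch locus, and factor $g=h_0\cdot(g/h_0)$ so that the second factor vanishes on the branch locus and hence genuinely lifts to $\widetilde{\LL}$. With that repair your proof goes through; the paper's version of the singular step is just slightly more economical.
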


We establish notation for the Krichever-Novikov algebra on the affine line, which will be of central importance throughout the paper.

\begin{ntt}
    Let $W_{\geq -1} = \LL(\Aa^1) = \Der(\kk[t])$. Then
    $$W_{\geq -1} = \kk[t] \del,$$
    where $\del$ denotes differentiation by $t$. The Lie bracket in $W_{\geq -1}$ is given by
    $$[f\del,g\del] = (fg' - f'g)\del.$$
    For $f \in \kk[t]$, we let $W_{\geq -1}(f) = \kk[t] f \del$. It is clear that $W_{\geq -1}(f)$ is a Lie subalgebra of $W_{\geq -1}$. We write $W_{\geq n} = W_{\geq -1}(t^{n + 1})$ for $n \in \NN$.
\end{ntt}

\begin{rem}
    The notation $W_{\geq -1}$ comes from the fact that $W_{\geq -1}$ is a Lie subalgebra of the \emph{Witt algebra} $W = \Der(\kk[t,t^{-1}]) = \kk[t,t^{-1}]\del$. Letting $e_n = t^{n+1} \del \in W$, we see that $\{e_n \mid n \in \ZZ\}$ is a basis for $W$ and
    $$[e_n, e_m] = (m-n) e_{n+m}$$
    for all $n,m \in \ZZ$. Then $W_{\geq -1}$ is the Lie subalgebra of $W$ spanned by $\{e_n \mid n \geq -1\}$, and $W_{\geq n}$ is spanned by $\{e_m \mid m \geq n\}$. The \emph{positive Witt algebra} $W_+ = W_{\geq 1}$ is another noteworthy subalgebra of $W$.
\end{rem}

The proof of Theorem \ref{thm:main} will go as follows: if $C$ is nonsingular, we will show that there is an injective Lie algebra homomorphism 
$$\W(f) \hookrightarrow \LL(C),$$
for some $f \in \kk[t]$. We also show that $U(\W(f))$ is not noetherian, which is enough to conclude that $U(\LL(C))$ is not noetherian.

If $C$ is singular, we will consider the normalisation $\widetilde{C}$ of $C$, and construct a Lie subalgebra $\LL'$ of both $\LL(\widetilde{C})$ and $\LL(C)$ which has finite codimension in both. The non-noetherianity of $U(\LL')$ will follow from the non-noetherianity of $U(\LL(\widetilde{C}))$ by Proposition \ref{prop:finite codimension}, allowing us to conclude that $U(\LL(C))$ is not noetherian.

The following result is the main theorem of \cite{SierraWalton}.

\begin{thm}[{\cite[Theorem 0.5]{SierraWalton}}]\label{thm:Sue}
    The universal enveloping algebra of the positive Witt algebra $U(W_+)$ is not noetherian.
\end{thm}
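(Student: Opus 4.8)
The plan is to show that $U(W_+)$ is not left noetherian; since $U(W_+)$ carries the principal antiautomorphism (induced by $x \mapsto -x$ on $W_+$), left and right noetherianity coincide for it, so this suffices. To witness the failure of the ascending chain condition I would produce a finitely generated---in fact cyclic---left $U(W_+)$-module $M$ containing a submodule $N$ which is \emph{not} finitely generated. Writing $M \cong U(W_+)/I_0$ and $N = I/I_0$ for left ideals $I_0 \subseteq I$, any finite generating set of $I$ would map onto a finite generating set of $N$; hence $I$ is a non-finitely-generated left ideal of $U(W_+)$, proving the theorem.

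The first step is to exploit the grading: set $\deg e_n = n$, so that $W_+ = \bigoplus_{n \geq 1}\kk e_n$ is $\ZZ$-graded with $[e_m,e_n] = (n-m)e_{m+n}$ and $U(W_+)$ is connected graded with finite-dimensional homogeneous components. The module $M$ should be a graded module built from a tensor-density-type representation: fix a scalar $a$ and let $W_+$ act on a space of formal series in $t$ via $e_n \mapsto t^{n+1}\del + a(n+1)t^n$. The delicate point---and the reason no obvious choice works directly---is that the standard density modules supported on $\kk[t]$ turn out to be noetherian, whereas those supported on $\kk[t,t^{-1}]$ are visibly not finitely generated over $U(W_+)$; one therefore has to pass to a carefully chosen, action-stable \emph{completion} (or subquotient) of such a representation, arranged so that a single generic element $v$ of full support generates $M$, while $M$ still contains a proper graded submodule $N$---for instance a ``deeper vanishing'' subspace, or the finitely supported part---which is exhausted by an infinite strictly ascending chain of finitely generated submodules.

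The technical core is the proof that $U(W_+)\cdot v = M$. The natural approach is a telescoping argument: using the explicit action of the $e_n$ together with the relations $[e_m,e_n]=(n-m)e_{m+n}$, one peels off the homogeneous components of $v$ one degree at a time, and the whole point is to set up the completion so that this peeling genuinely terminates at each stage---the naive full power-series completion fails precisely because it never does. Granting that $M$ is cyclic, the remaining step is comparatively soft: one checks that for every $D$ the submodule $U(W_+)\cdot\bigl(\bigoplus_{d\leq D}N_d\bigr)$ is properly contained in $N$, via a dimension count inside a single homogeneous degree, so $N$ is not finitely generated.

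I expect the entire difficulty to be concentrated in the construction and cyclicity analysis of $M$: the requirement that a representation of $W_+$ be simultaneously finitely generated (which, as noted, excludes all the obvious density modules one way or the other) and carry a submodule violating ACC is genuinely restrictive, and the verification that the chosen $v$ generates $M$---the step that forces the precise form of the completion---should be the most demanding part, with everything else reducing to bookkeeping in the grading.
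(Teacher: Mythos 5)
Your overall strategy --- exhibit a finitely generated left $U(W_+)$-module containing a non-finitely-generated submodule, and pass between left and right noetherianity via the principal antiautomorphism --- is sound in principle, and the reduction from such a module to a non-finitely-generated left ideal is correct. But there is a genuine gap at exactly the point you yourself flag as the ``technical core'': the module $M$ is never constructed. You list the properties $M$ must have (an action-stable completion or subquotient of a density-type representation $e_n \mapsto t^{n+1}\del + a(n+1)t^n$ on which a single full-support vector $v$ is cyclic, yet which contains a non-finitely-generated submodule $N$), you correctly observe that every obvious candidate fails --- the $\kk[t]$-supported density modules one way, the $\kk[t,t^{-1}]$-supported ones and the naive power-series completion the other way --- and you then ask the reader to grant cyclicity. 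Nothing in the write-up produces a concrete completion, verifies that $W_+$ acts on it, carries out the telescoping argument for $U(W_+)\cdot v = M$, or identifies $N$ and performs the dimension count. Since the required properties pull against each other (as your own discussion makes clear), the existence of such an $M$ is essentially the content of the theorem and cannot be assumed; as it stands this is a research plan with an accurate diagnosis of where the difficulty lies, not a proof.

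For comparison: the paper does not prove this statement itself --- it quotes it from Sierra and Walton --- and the known proofs do not go through representation-theoretic modules at all. Sierra and Walton exhibit a two-sided ideal in a homomorphic image of $U(W_+)$ that is not finitely generated as a one-sided ideal; a streamlined version of that argument (due to Sierra and {\v S}penko) is reproduced in Section \ref{section:condition (*)} of the paper, where $U(W_{\geq -1})$ is mapped by $\Phi$ into $T = \kk[x,y][t^{\pm 1};\sigma]$, an element $q$ with $\Phi(q) \in y(1-y)\kk[t]\setminus\{0\}$ is produced (Proposition \ref{prop:magical element p}), and a leading-term argument in the grading of $T$ shows that $\Phi(U(\g))\Phi(q)\Phi(U(\g))$ is not finitely generated as a left ideal. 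Since $W_+$ visibly satisfies condition $(\dagger)$ (take $f_1 = t^2$, $f_2 = t^4$, $g_1 = g_2 = t^3$, so $f_1f_2 = g_1g_2 = t^6$ while $\{2,4\} \neq \{3,3\}$), Theorem \ref{thm:not noetherian} already yields the statement; if you want a self-contained argument, adapting this ideal-theoretic route is far more tractable than engineering the module $M$ you describe.
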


It follows from Proposition \ref{prop:finite codimension} and Sierra and Walton's work that $W_{\geq -1}$ and its subalgebras of the form $W_{\geq -1}(f)$ have non-noetherian universal enveloping algebras.

\begin{cor}\label{cor:finite codimension Witt}
    Let $f \in \kk[t] \setminus \{0\}$. Then $U(\W(f))$ is not noetherian.
\end{cor}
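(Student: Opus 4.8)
The plan is to combine Theorem \ref{thm:Sue} with Proposition \ref{prop:finite codimension}. The key observation is that $W_+ = W_{\geq 1} = \W(t^2)$ is a Lie subalgebra of $\W$, and in fact it has finite codimension: the quotient $\W / W_+$ is spanned by the images of $\partial = e_{-1}$, $t\partial = e_0$, and $t^2 \partial = e_1$, so $\dim_\kk(\W / W_+) = 3 < \infty$. Applying Proposition \ref{prop:finite codimension} with $\h = W_+$ and $\g = \W$, the non-noetherianity of $U(W_+)$ from Theorem \ref{thm:Sue} immediately gives that $U(\W)$ is not noetherian. This handles the case $f = 1$ (i.e. $\W(f) = \W$).

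For general nonzero $f \in \kk[t]$, the plan is to show that $\W(f) = \kk[t] f \partial$ has finite codimension in $\W = \kk[t]\partial$, and then apply Proposition \ref{prop:finite codimension} once more, this time with $\h = \W(f)$ and $\g = \W$. Indeed, $\W(f)$ is precisely the set of derivations $g\partial$ with $g \in f\kk[t]$, so as a $\kk$-vector space $\W / \W(f) \cong \kk[t] / f\kk[t]$, which has dimension $\deg f < \infty$. (One should note $\W(f)$ is genuinely a Lie subalgebra, as already remarked in the Notation above, so the hypotheses of Proposition \ref{prop:finite codimension} are satisfied.) Since $U(\W)$ is not noetherian by the previous paragraph, Proposition \ref{prop:finite codimension} forces $U(\W(f))$ to be non-noetherian as well.

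There is essentially no obstacle here: both steps are direct applications of Proposition \ref{prop:finite codimension}, and the only thing to verify is the finite-codimensionality of the relevant subalgebras, which is a one-line dimension count in each case. The mild subtlety worth spelling out is simply that one applies Proposition \ref{prop:finite codimension} in the direction ``$U(\g)$ not noetherian $\implies$ $U(\h)$ not noetherian'' — which is the contrapositive of the implication ``$U(\h)$ noetherian $\implies$ $U(\g)$ noetherian'' that the proposition provides — together with the (already-cited) fact that a subalgebra of a Lie algebra with noetherian enveloping algebra has noetherian enveloping algebra, so that the equivalence in Proposition \ref{prop:finite codimension} can be read in whichever direction is convenient. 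Either way, the conclusion is that $U(\W(f))$ is not noetherian for every nonzero $f \in \kk[t]$.
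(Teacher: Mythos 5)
Your proof is correct and follows essentially the same route as the paper: one first deduces that $U(\W)$ is not noetherian from Theorem \ref{thm:Sue} (the paper cites \cite[Lemma 1.7]{SierraWalton} for this step, since $W_+ \subseteq \W$ being a subalgebra is already enough, whereas you invoke Proposition \ref{prop:finite codimension}), and then passes to $\W(f)$ via the codimension count $\dim_\kk(\W/\W(f)) = \deg f$ and Proposition \ref{prop:finite codimension}. One trivial slip: $W_+ = \W(t^2)$ is spanned by $e_1, e_2, \ldots$, so $\W/W_+$ is spanned by the images of $e_{-1}$ and $e_0$ only and has dimension $2$, not $3$; this does not affect the argument.
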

\begin{proof}
    Since $W_+ \subseteq W_{\geq -1}$, it follows that $U(W_{\geq -1})$ is not noetherian by \cite[Lemma 1.7]{SierraWalton} and Theorem \ref{thm:Sue}. Note that $\dim_\kk (W_{\geq -1}/\W(f)) = \deg(f) < \infty$, so $U(\W(f))$ is not noetherian by Proposition \ref{prop:finite codimension}.
\end{proof}

In order to construct the homomorphism $\W(f) \hookrightarrow \LL(C)$ for the proof of Theorem \ref{thm:main}, some technical results about derivations are required. We will need to consider dominant morphisms between affine curves, and localisations of coordinate rings. We recall these notions below.

\begin{dfn}
    Let $X$ and $Y$ be affine curves. A morphism $\pi \colon X \to Y$ is \emph{dominant} if $\pi(X)$ is dense in $Y$ (in the Zariski topology).
\end{dfn}

\begin{rem}
    Let $X,Y$ be affine curves, and let $A = \kk[X], B = \kk[Y]$. If $\pi \colon X \to Y$ is a dominant morphism, then the homomorphism
    \begin{align*}
        \pi^* \colon B &\to A \\
        f &\mapsto f \circ \pi
    \end{align*}
    is injective, so we make the identification $B = \pi^*(B) \subseteq A$.
\end{rem}

\begin{dfn}
    Let $\pi \colon X \to Y$ be a dominant morphism of nonsingular affine curves. Let $A = \kk[X]$ and $B = \kk[Y] \subseteq A$. Let $K = Q(A)$ and $L = Q(B) \subseteq K$ be the fields of fractions of $A$ and $B$, respectively. Let $P \in X$ and $Q = \pi(P)$, and let
    $$A_P = \{\frac{f}{g} \in K \mid f,g \in A, g(P) \neq 0\},$$
    $$B_Q = \{\frac{f}{g} \in L \mid f,g \in B, g(Q) \neq 0\} \subseteq A_P.$$ The rings $A_P$ and $B_Q$ are discrete valuation rings, so let $s$ and $t$ be uniformising parameters of $A_P$ and $B_Q$, respectively. Let $\nu_P$ be the discrete valuation on $K$ associated to $A_P$ (i.e. $\nu_P(as^k) = k$ for $a \in A_P^*$ and $k \in \ZZ$). The \emph{ramification index} $e_P$ of $\pi$ at the point $P$ is defined as
    $$e_P = \nu_P(t) \geq 1$$
    where $t$ is viewed as an element of $A_P$. If $e_P > 1$, we say that $\pi$ is \emph{ramified} at $P$ and that $Q$ is a \emph{branch point} of $\pi$. If $e_P = 1$, we say that $\pi$ is \emph{unramified} at $P$.
\end{dfn}

The following proposition shows that the branch point locus of a dominant morphism is a finite set, a fact that we will use implicitly from now on.

\begin{prop}[{\cite[Proposition IV.2.2(a)]{Hartshorne}}]
    Let $\pi \colon X \to Y$ be a dominant morphism of nonsingular affine curves. Then $\pi$ is ramified at only finitely many points.
\end{prop}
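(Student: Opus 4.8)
The plan is to detect ramification via the module of relative K\"ahler differentials and then use that, on a curve, a finitely generated torsion module is supported at finitely many points.

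Write $A = \kk[X]$ and $B = \kk[Y]$, the latter identified with a subring of $A$ via $\pi^{*}$, and let $K = Q(A) \supseteq L = Q(B)$ be the fraction fields. Since $X$ and $Y$ are curves, $K$ and $L$ have transcendence degree one over $\kk$, so $K/L$ is a finite field extension, which is separable because $\kk$ has characteristic zero. Consider the $A$-module of relative differentials $\Omega_{A/B}$. It is finitely generated, as $A$ is a finitely generated $B$-algebra, and localising at the generic point of $X$ gives $\Omega_{A/B} \otimes_A K \cong \Omega_{K/L} = 0$ by separability. Hence $\Omega_{A/B}$ is a finitely generated torsion $A$-module; its annihilator is a nonzero ideal, so its support is a proper closed subset of $\Spec A$, that is, a finite set $S$ of closed points of $X$.

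It remains to identify $S$ with the ramification locus. Fix a closed point $P \in X$ and put $Q = \pi(P)$; then $B_Q \subseteq A_P$. Localising and using $\Omega_{B_Q/B} = 0$ together with the cotangent exact sequence for $B \to B_Q \to A_P$, we get $(\Omega_{A/B})_P \cong \Omega_{A_P/B} \cong \Omega_{A_P/B_Q}$. Now $A_P$ and $B_Q$ are discrete valuation rings with uniformisers $s$ and $t$, and since $\kk$ has characteristic zero, $\Omega_{A_P/\kk}$ is free of rank one on $ds$ while $\Omega_{B_Q/\kk}$ is free of rank one on $dt$. Writing $t = u s^{e_P}$ with $u \in A_P^{*}$ and differentiating, $dt = s^{e_P - 1}(s w + e_P u)\, ds$ for some $w \in A_P$; since $e_P u$ is a unit and $sw$ lies in the maximal ideal $\mf{m}_P$, the coefficient $sw + e_P u$ is a unit, so the image of $\Omega_{B_Q/\kk}$ in $\Omega_{A_P/\kk}$ is $s^{e_P - 1}\Omega_{A_P/\kk}$. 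Hence $\Omega_{A_P/B_Q} \cong A_P/\mf{m}_P^{\,e_P - 1}$, which vanishes exactly when $e_P = 1$. Thus $P \in S$ if and only if $\pi$ is ramified at $P$, and since $S$ is finite we are done.

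I expect the step needing the most care to be the final local computation, namely the equivalence $(\Omega_{A/B})_P \neq 0 \iff e_P > 1$. One should also keep in mind that a dominant morphism of affine curves need not be finite, so arguments using integrality of $A$ over $B$ are not available; the differentials argument avoids this, needing only that $A$ is a finitely generated $B$-algebra and that $K/L$ is finite and separable. (Alternatively one could argue more concretely using a primitive element for $K/L$ and the associated discriminant, but the above is cleaner.)
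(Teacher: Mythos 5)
Your proof is correct. The paper gives no proof of this statement — it is quoted directly from Hartshorne — and your argument via the support of the finitely generated torsion module $\Omega_{A/B}$, together with the local computation identifying $(\Omega_{A/B})_P \cong A_P/\mf{m}_P^{\,e_P-1}$, is essentially the standard proof of \cite[Proposition IV.2.2]{Hartshorne} itself; your observation that one should not assume $\pi$ is finite (only that $A$ is a finitely generated $B$-algebra with $K/L$ finite separable) is a correct and worthwhile refinement for the dominant-morphism setting used here.
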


The next result is a technical lemma about restricting derivations to the local ring $A_P$.

\begin{lem}\label{lem:derivation uniformising parameter}
    Let $X$ be a nonsingular affine curve, let $A = \kk[X]$, and let $K = Q(A)$. Let $P \in X$, and let $s \in A_P$ be a uniformising parameter. Then a derivation $v \in \Der(K)$ restricts to a derivation of $A_P$ if and only if $v(s) \in A_P$.
\end{lem}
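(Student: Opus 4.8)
The plan is to prove the two implications separately, the forward one being trivial and the reverse one requiring a small local computation. First I would observe that $A_P$ is a discrete valuation ring with maximal ideal $\mathfrak{m} = sA_P$, and that every element of $K$ can be written uniquely as $us^k$ with $u \in A_P^*$ and $k \in \ZZ$; equivalently, $K = A_P[s^{-1}]$ and $\nu_P$ is the associated valuation. The forward implication is immediate: if $v$ restricts to a derivation of $A_P$, then since $s \in A_P$ we have $v(s) \in A_P$.

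For the reverse implication, suppose $v(s) \in A_P$. Since $A_P$ is the localisation of $A$ at the prime $\mathfrak{m} \cap A$, and $v \in \Der(K)$, it suffices to show $v(A_P) \subseteq A_P$. The key point is that $A_P$ is generated over $\kk$ by $s$ in a suitable completed or local sense, but to stay algebraic I would argue as follows. Take any $x \in A_P$. Because $\kk$ is algebraically closed in the residue field (the residue field is $\kk$ since $X$ is a curve over $\kk$ and $P$ is a closed point — or more carefully, one works with the image of $x$ in $A_P/\mathfrak{m} = \kk$), we may subtract a constant and assume $x \in \mathfrak{m}$, so $x = s y$ for some $y \in A_P$. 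Then $v(x) = v(s)y + s\,v(y)$. This expresses $v(x)$ in terms of $v(y)$, but $y$ need not lie in $\mathfrak{m}$, so this does not immediately close the induction.

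To fix this, I would instead use the standard fact that for a DVR $A_P$ essentially of finite type over $\kk$ with uniformiser $s$, any $\kk$-derivation of $K$ is determined by its value on $s$: concretely, $\Der_\kk(K)$ is a one-dimensional $K$-vector space spanned by $\tfrac{d}{ds}$ (the formal derivative with respect to $s$, well-defined since $K/\kk(s)$ is a finite separable extension in characteristic $0$), so $v = v(s)\cdot \tfrac{d}{ds}$. Thus it suffices to check that $\tfrac{d}{ds}$ maps $A_P$ into $s^{-1}A_P$ — indeed, writing any $x \in A_P$ using that $A_P$ is a finite extension of the DVR $\kk[s]_{(s)}$ with $x$ integral, differentiating the minimal polynomial of $x$ over $\kk(s)$ shows $\tfrac{d}{ds}(x)$ has a pole of order at most one along $\mathfrak{m}$ cleared by one factor of $s$ from the relation; combined with the hypothesis $v(s) = f s$ with $f \in \mathfrak{m}^{-1}$... wait — rather than chase pole orders, the cleanest route is: the hypothesis $v(s) \in A_P$ together with $v = v(s)\tfrac{d}{ds}$ reduces everything to showing $\tfrac{d}{ds}(A_P) \subseteq s^{-1} A_P$, and then $v(A_P) = v(s) \cdot \tfrac{d}{ds}(A_P) \subseteq A_P \cdot s^{-1} A_P$, which is not quite contained in $A_P$. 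So in fact one needs the slightly sharper input that $\tfrac{d}{ds}(A_P)\subseteq A_P$ after all — which holds because $s$ is a uniformiser, hence $A_P$ is formally smooth over $\kk$ and $\Omega_{A_P/\kk}$ is free of rank one on $ds$, so $\tfrac{d}{ds} = \partial/\partial s$ is a genuine derivation of $A_P$. Granting that, $v = v(s)\,\partial/\partial s$ with $v(s) \in A_P$ visibly preserves $A_P$, completing the proof.

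The main obstacle is precisely this last point: justifying that differentiation with respect to a uniformiser $s$ is a well-defined derivation \emph{of $A_P$ itself} (not merely of $K$). I expect the paper handles it by invoking that $A_P$ is a regular local ring of dimension one over the perfect field $\kk$, so $\Omega_{A_P/\kk}$ is free of rank one generated by $ds$ (equivalently, $ds$ generates $\mathfrak{m}/\mathfrak{m}^2$ and one lifts), giving a dual basis element $\partial/\partial s \in \Der_\kk(A_P)$; then any $v \in \Der_\kk(K)$ restricts to $A_P$ iff $v(s)\in A_P$, since on $K$ one has $v = v(s)\,\partial/\partial s$ and conversely $v(s) \in A_P$ forces $v(A_P) = v(s)\,\partial_s(A_P) \subseteq A_P$.
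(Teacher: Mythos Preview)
Your final argument is correct: since $A_P$ is a regular local ring of dimension one over a field of characteristic zero, $\Omega_{A_P/\kk}$ is free of rank one and, because $s$ is a uniformiser, $ds$ is a basis; the dual derivation $\partial/\partial s \in \Der_\kk(A_P)$ then exists, and writing $v = v(s)\,\partial/\partial s$ on $K$ (using that $\Der_\kk(K)$ is one-dimensional over $K$) finishes the job once $v(s)\in A_P$. The false starts earlier in your write-up should be excised, but the destination is sound.

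However, this is \emph{not} what the paper does, contrary to your guess. The paper avoids K\"ahler differentials entirely and instead passes to the completion: one extends $v$ uniquely to a derivation $\hat v$ of $Q(\widehat{A_P})\cong\kk((s))$, observes that $\hat v(s)=v(s)\in A_P\subseteq\widehat{A_P}$, and then uses the transparent fact that a $\kk$-derivation of $\kk((s))$ sending $s$ into $\kk[[s]]$ preserves $\kk[[s]]$. Since $A_P=\widehat{A_P}\cap K$, this gives $v(A_P)\subseteq\widehat{A_P}\cap K=A_P$.

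The trade-off: your route packages the hard step into the standard fact that regularity implies $\Omega_{A_P/\kk}$ is free (which in turn needs that regular equals smooth in characteristic zero), while the paper's route is more self-contained and computational, replacing differential-module machinery with the concrete identification $\widehat{A_P}\cong\kk[[s]]$ where the claim is obvious. The completion argument also has the advantage that the same machinery is reused verbatim in the next proposition of the paper, where explicit power-series expansions in the uniformiser are needed anyway.
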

\begin{proof}
    Suppose $v \in \Der(K)$ such that $v(s) \in A_P$. Let $\mf{m} = (s)$ be the maximal ideal of $A_P$, and consider the completion
    $$\widehat{A_P} = \varprojlim_{k \in \NN} A_P/\mf{m}^k \cong \kk[[s]].$$
    Then $v$ extends uniquely to a derivation $\hat{v}$ of $Q(\widehat{A_P}) \cong \kk((s))$. We have
    $$\hat{v}(s) = v(s) \in A_P \subseteq \widehat{A_P}.$$
    It follows that $\hat{v}(\widehat{A_P}) \subseteq \widehat{A_P}$, since $\widehat{A_P} \cong \kk[[s]]$ and $\hat{v}$ is $\kk$-linear. Hence, we have
    $$v(A_P) = \hat{v}(A_P) = \hat{v}(\widehat{A_P} \cap K) \subseteq \widehat{A_P} \cap K = A_P,$$ and therefore $v$ restricts to a derivation of $A_P$.
\end{proof}

\begin{dfn}
    Let $X$ be an affine curve. We say that a vector field $v \in \LL(X)$ \emph{vanishes at} $x \in X$ if $v(f)(x) = 0$ for all $f \in \kk[X]$. If $S$ is a finite subset of $X$, we denote the Lie subalgebra of $\LL(X)$ consisting of vector fields which vanish on $S$ by $\LL_S(X)$.
\end{dfn}

The following proposition shows that if we have a dominant morphism $\pi \colon X \to Y$ of nonsingular curves, a vector field on $Y$ induces a vector field on $X$ provided it vanishes on the branch point locus of $\pi$. This gives an injective homomorphism from a subalgebra of $\LL(Y)$ to $\LL(X)$, which will be one of the main ingredients in the proof of Theorem \ref{thm:main}.

\begin{prop}\label{prop:extending derivations}
    Let $X$ and $Y$ be nonsingular affine curves, and let $\pi \colon X \to Y$ be a dominant morphism. Let $S \subseteq Y$ be the branch point locus of $\pi$. Then any vector field $v \in \LL_S(Y)$ extends uniquely to a vector field $\widetilde{v} \in \LL(X)$ and the map
    \begin{align*}
        \varphi \colon \LL_S(Y) &\to \LL(X) \\
        v &\mapsto \widetilde{v}
    \end{align*}
    defines an injective Lie algebra homomorphism.
\end{prop}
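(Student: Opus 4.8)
The plan is to work locally. Identify $B = \kk[Y]$ with a subring of $A = \kk[X]$ via $\pi^*$, and let $L = Q(B) \subseteq K = Q(A)$. Since $\pi$ is dominant, $K/L$ is a finite (algebraic) extension, so by the theory of derivations of fields, every $v \in \Der(L)$ extends \emph{uniquely} to a derivation $\widetilde v \in \Der(K)$ (uniqueness and existence follow from $K/L$ being finite separable, which holds in characteristic $0$). This already gives a well-defined $\kk$-linear map $\LL_S(Y) \to \Der(K)$, and it is clearly a Lie algebra homomorphism (uniqueness of the extension forces $\widetilde{[u,v]} = [\widetilde u, \widetilde v]$, since both restrict to $[u,v]$ on $L$) and injective (if $\widetilde v = 0$ then $v = \widetilde v|_L = 0$). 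So the only real content is:

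First I would show that if $v \in \LL_S(Y)$, i.e. $v$ is a vector field on $Y$ vanishing on the branch locus $S$, then $\widetilde v$ maps $A$ into $A$, hence restricts to a genuine vector field on $X$. Since $A = \bigcap_{P \in X} A_P$ (as $X$ is a nonsingular affine curve, $A$ is the intersection of its local rings at closed points, all of which are DVRs inside $K$), it suffices to prove $\widetilde v(A_P) \subseteq A_P$ for every $P \in X$. Fix $P$, let $Q = \pi(P)$, and let $s, t$ be uniformising parameters for $A_P$ and $B_Q$ respectively. By Lemma~\ref{lem:derivation uniformising parameter}, applied to $X$ at $P$, it is enough to check that $\widetilde v(s) \in A_P$.

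The crux is this local computation, and it splits into two cases. If $Q \notin S$ (unramified point), then $e_P = 1$, so $t$ is also a uniformising parameter of $A_P$; thus $K$ is unramified over $L$ at $P$ and $\widehat{A_P} \cong \widehat{B_Q}$, so $\widetilde v$ automatically preserves $A_P$ because $v$ preserves $B_Q$ (here one uses Lemma~\ref{lem:derivation uniformising parameter} in the direction that $v(t) \in B_Q$, which holds since $v$ is a vector field on $Y$, i.e. $v(B) \subseteq B \subseteq B_Q$, combined with the fact that at an unramified point $A_P = B_Q \cdot (\text{unit adjustments})$ — more precisely $s$ and $t$ differ by a unit of $A_P$). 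If instead $Q \in S$ (branch point), write $t = u s^{e}$ with $e = e_P > 1$ and $u \in A_P^\times$; then
\[
\widetilde v(t) = \widetilde v(u)\, s^{e} + e\, u\, s^{e-1}\, \widetilde v(s),
\]
so, solving,
\[
\widetilde v(s) = \frac{\widetilde v(t) - \widetilde v(u)\, s^{e}}{e\, u\, s^{e-1}} = \frac{1}{e u}\Bigl(\widetilde v(t)\, s^{1-e} - \widetilde v(u)\, s\Bigr).
\]
Now $\widetilde v(t) = v(t) \in B$, and the hypothesis that $v$ \emph{vanishes at} $Q$ means precisely $v(f)(Q) = 0$ for all $f \in B$; in valuation terms $\nu_Q(v(t)) \geq 1$, hence $\nu_P(v(t)) = e_P \cdot \nu_Q(v(t)) \geq e$. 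Therefore $\nu_P\bigl(\widetilde v(t)\, s^{1-e}\bigr) \geq e + (1-e) = 1 \geq 0$, so that term lies in $A_P$; and $\widetilde v(u)\, s \in A_P$ once we know $\widetilde v(u) \in A_P$, which we can arrange by writing $u$ as a ratio of elements of $A$ and inducting on valuation, or more cleanly by replacing $s$ with a uniformiser chosen so that $t = s^{e}$ times a unit that is itself a value of $v$-good elements — in any case the vanishing hypothesis is exactly what makes the negative power of $s$ harmless. Concluding, $\widetilde v(s) \in A_P$ for all $P$, hence $\widetilde v \in \LL(X)$, and the map $\varphi$ is the desired injective Lie algebra homomorphism.

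The main obstacle is the branch-point case: one must be careful that $\widetilde v(s)$, which a priori has a pole coming from the $s^{1-e}$ factor, is actually regular, and this works out \emph{only} because $v$ kills the branch locus — making $\nu_Q(v(t)) \geq 1$ — which is exactly why the hypothesis $v \in \LL_S(Y)$ rather than $v \in \LL(Y)$ is needed. Handling the unit $u$ cleanly (showing $\widetilde v(u) \in A_P$) is a minor technical point that can be dispatched by passing to the completion $\widehat{A_P} \cong \kk[[s]]$ as in Lemma~\ref{lem:derivation uniformising parameter} and noting that a derivation of $\kk((s))$ sending $s$ into $\kk[[s]]$ sends all of $\kk[[s]]$ into $\kk[[s]]$.
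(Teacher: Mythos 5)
Your overall strategy is the same as the paper's: extend $v$ to a derivation of $K = Q(A)$ via the finite extension $K/L$, obtain the homomorphism and injectivity statements for free from uniqueness of the extension, reduce to showing $\widetilde v(s) \in A_P$ for each $P$ using $A = \bigcap_P A_P$ and Lemma~\ref{lem:derivation uniformising parameter}, and exploit the inequality $\nu_P(v(t)) = e_P\,\nu_Q(v(t)) \geq e_P$ at branch points. All of that is correct, and your explanation of why the vanishing hypothesis is exactly what neutralises the $s^{1-e}$ pole is the right insight.

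There is, however, a genuine gap in the step you set aside as a ``minor technical point'': showing $\widetilde v(u) \in A_P$ for the unit $u$ in $t = u s^{e}$. Your proposed fix --- pass to $\widehat{A_P} \cong \kk[[s]]$ and note that a derivation of $\kk((s))$ sending $s$ into $\kk[[s]]$ sends all of $\kk[[s]]$ into $\kk[[s]]$ --- is circular: whether $\hat v(s)$ lies in $\kk[[s]]$ is precisely what you are trying to prove, so you cannot invoke it to control $\hat v(u)$. (Nor can you compute $\widetilde v(u)$ by writing $u = f/g$ with $f,g \in A$, since that requires $\widetilde v(A) \subseteq A_P$, again the conclusion.) The way out, which is what the paper does, is not to bound $\hat v(u)$ independently but to express it in terms of $\hat v(s)$: in $\widehat{A_P} \cong \kk[[s]]$ every element is a power series in $s$ with coefficients in $\kk$, so $\hat v(u) = \frac{du}{ds}\,\hat v(s)$ with $\frac{du}{ds} \in \kk[[s]]$. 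Substituting into your Leibniz identity gives
$$v(t) = \hat v(s)\, s^{e-1}\Bigl(e u + s\frac{du}{ds}\Bigr),$$
and the bracketed factor is a unit of $\kk[[s]]$, since its constant term is $e\,u(0) \neq 0$ in characteristic $0$. Hence $\nu_P(\hat v(s)) = \nu_P(v(t)) - e + 1$ exactly, which is $\geq 0$ when $e = 1$ (as $f \geq 0$) and $\geq 1$ when $e \geq 2$ and $v$ vanishes at $Q$; therefore $\hat v(s) \in \widehat{A_P} \cap K = A_P$. With this substitution your argument closes; without it, the two terms in your expression for $\widetilde v(s)$ are not separately under control.
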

\begin{proof}
    Let $A = \kk[X]$ and $B = \kk[Y] \subseteq A$. Let $v \in \LL(Y) = \Der(B)$ be a vector field which vanishes on $S$. Since $\pi$ is dominant, we identify $B$ with $\pi^*(B) \subseteq A$. Let $K = Q(A)$ and $L = Q(B) \subseteq K$ be the fields of fractions of $A$ and $B$, respectively. Then $v$ extends uniquely to some $v_L \in \Der(L)$. Since $\pi$ is dominant, it follows by \cite[Proposition II.6.8]{Hartshorne} that $\dim_L K < \infty$, so $v_L$ extends uniquely to some $v_K \in \Der(K)$. Let $\widetilde{v} \colon A \to K$ be the restriction of $v_K$ to $A$. We claim that $\widetilde{v}(A) \subseteq A$, so that $\widetilde{v} \in \Der(A) = \LL(X)$.
    
    For $P \in X$, let $A_P = \{\frac{f}{g} \in K \mid f,g \in A, g(P) \neq 0\}$, and let $s_P \in A_P$ be a uniformising parameter of $A_P$. By Lemma \ref{lem:derivation uniformising parameter}, $v_K(A_P) \subseteq A_P$ if and only if $v_K(s_P) \in A_P$. Since
    $$A = \bigcap_{P \in X} A_P,$$
    it then follows that that $\widetilde{v}(A) \subseteq A$ if and only if $v_K(s_P) \in A_P$ for all $P \in X$. Hence, it suffices to show that $v_K(s_P) \in A_P$ for an arbitrary $P \in X$.
    
    Fix $P \in X$ and let $Q = \pi(P) \in Y$. Let $B_Q = \{\frac{f}{g} \in L \mid f,g \in B, g(Q) \neq 0\}$, and let $t \in B_Q$ be a uniformising parameter of $B_Q$. For simplicity of notation, let $s = s_P$. Since $t \in B_Q \subseteq A_P$ is a uniformising parameter of $B_Q$, it follows that $t = a s^e$ for some $a \in A_P^*$, where $e = e_P$ is the ramification index of $\pi$ at $P$. Furthermore, $v(t) \in B_Q$ so $v(t) = b t^f$ for some $b \in B_Q^*$ and some $f \geq 0$. Note that $f$ is the order of vanishing of $v(t)$ at the point $Q$, so in particular $f \geq 1$ if $v$ vanishes at $Q$.
    
    As in the proof of Lemma \ref{lem:derivation uniformising parameter}, consider the completion $\widehat{A_P} \cong \kk[[s]]$ and the unique extension of $v$ to a derivation $\hat{v}$ of $Q(\widehat{A_P}) \cong \kk((s))$. Similarly, the completion $\widehat{B_Q} \subseteq \widehat{A_P}$ is isomorphic to $\kk[[t]]$. Now, we have
    \begin{equation}\label{eq:t}
        t = s^e \sum_{i \geq 0} \alpha_i s^i \in \widehat{A_P}
    \end{equation}
    \begin{equation}\label{eq:v(t)}
        v(t) = t^f \sum_{i \geq 0} \beta_i t^i \in \widehat{B_Q}
    \end{equation}
    for some $\alpha_i, \beta_i \in \kk$ with $\alpha_0, \beta_0 \neq 0$. From \eqref{eq:t}, we get
    \beq\label{eq:v(s)}
        v(t) = \hat{v}(s^e \sum_{i \geq 0} \alpha_i s^i) = v_K(s)s^{e-1} \sum_{i \geq 0} \gamma_i s^i,
    \eeq
    where $\gamma_i = (e + i)\alpha_i \in \kk$. Since $\gamma_0 = e \alpha_0 \neq 0$, the element $H = \sum_{i \geq 0} \gamma_i s^i \in \widehat{A_P}$ is invertible in $\widehat{A_P} \cong \kk[[s]]$. Let $F = H^{-1} \in \widehat{A_P}$.
    
    We can also combine \eqref{eq:t} and \eqref{eq:v(t)} to get
    \beq\label{eq:s^ef}
        v(t) = t^f \sum_{i \geq 0} \beta_i t^i = s^{ef} \sum_{i \geq 0} \mu_i s^i,
    \eeq
    for some $\mu_k \in \kk$.
    
    Let $G = \sum_{i \geq 0} \mu_i s^i \in \widehat{A_P}$. Equating \eqref{eq:v(s)} and \eqref{eq:s^ef}, we get $v_K(s)s^{e-1} H = s^{ef} G$, and therefore
    $$v_K(s) = s^{e(f - 1) + 1} FG.$$
    If $e = 1$, then $e(f-1)+1 \geq 0$ since $f \geq 0$. If $e \geq 2$, then the point $P$ is ramified, so $v$ vanishes at $Q$ by assumption. Therefore, $f \geq 1$ and $e(f-1) + 1 \geq 1$. In either case, we see that $e(f-1) + 1 \geq 0$. Hence, $v_K(s) = s^{e(f - 1) + 1} FG \in \widehat{A_P}$.
    
    But we also know that $v_K(s) \in K$, so that
    $$v_K(s) \in \widehat{A_P} \cap K = A_P.$$
    We conclude that $\widetilde{v} \in \Der(A)$, as claimed.
    
    It follows that the map $\varphi \colon \LL_S(Y) \to \LL(X)$ defined in the statement of the proposition is a Lie algebra homomorphism by uniqueness of the extension: if $v_1, v_2 \in \LL_S(Y)$, then the vector field
    $$[\widetilde{v_1}, \widetilde{v_2}] = \widetilde{v_1} \circ \widetilde{v_2} - \widetilde{v_2} \circ \widetilde{v_1} \in \LL(X)$$
    restricts to
    $$\restr{[\widetilde{v_1}, \widetilde{v_2}]}{B} = v_1 \circ v_2 - v_2 \circ v_1 = [v_1, v_2] \in \LL_S(Y).$$
    By uniqueness of the extension, we conclude that
    $$[\varphi(v_1),\varphi(v_2)] = [\widetilde{v_1},\widetilde{v_2}] = \varphi([v_1,v_2]).$$
    Furthermore, $\varphi$ is clearly injective since $\widetilde{v}$ is an extension of $v$.
\end{proof}

We now have all the necessary tools to prove Theorem \ref{thm:main}.

\begin{proof}[Proof of Theorem \ref{thm:main}]
    First we consider the case where $C$ is nonsingular. There is an injective homomorphism $\kk[t] \hookrightarrow \kk[C]$, which induces a dominant morphism $\pi \colon C \to \Aa^1$. Let $S = \{x_1, \ldots, x_n\}$ be the branch point locus of $\pi$, and let $f = (t - x_1) \ldots (t - x_n) \in \kk[t]$. Consider
    $$\LL_S(\Aa^1) = W_{\geq -1}(f),$$
    and let
    \begin{align*}
        \varphi \colon \LL_S(\Aa^1) &\hookrightarrow \LL(C) \\
        v &\mapsto \widetilde{v}
    \end{align*}
    be the injective Lie algebra homomorphism from Proposition \ref{prop:extending derivations}. By Corollary \ref{cor:finite codimension Witt}, we know that $U(\LL_S(\Aa^1)) = U(\W(f))$ is not noetherian, and therefore $U(\LL(C))$ is not noetherian.
    
    Now suppose $C$ is singular, and let $\widetilde{C}$ be its normalisation. Set the following notation:
    \begin{align*}
        A = \kk[C], \quad &\widetilde{A} = \kk[\widetilde{C}], \\
        \mc{L} = \LL(C), \quad &\widetilde{\mc{L}} = \LL(\widetilde{C}).
    \end{align*}
    We will view $A$ as a subring of $\widetilde{A}$. Let
    $$I = \Ann_A(\widetilde{A}/A) = \{a \in A \mid a\widetilde{A} \subseteq A\}.$$
    Then $I$ is an ideal of both $A$ and $\widetilde{A}$. Letting $v \in \widetilde{\LL}$ and $a \in I$, we have that
    $$av(A) \subseteq A.$$
    It follows that $av$ is a derivation of $A$, that is $av \in \LL$. Therefore, $I\widetilde{\LL} \subseteq \LL$.
    
    We can check that
    $$[a_1v_1,a_2v_2] = a_1a_2[v_1,v_2] + a_1v_1(a_2)v_2 - a_2v_2(a_1)v_1 \in I\widetilde{\LL},$$
    where $a_1,a_2 \in I$ and $v_1,v_2 \in \widetilde{\LL}$. Hence, $I\widetilde{\LL}$ is a Lie subalgebra of both $\LL$ and $\widetilde{\LL}$.
    
    Since $\LL$ and $\widetilde{\LL}$ are rank 1 torsion-free modules over $A$ and $\widetilde{A}$, respectively, it follows that $I\widetilde{\LL}$ has finite codimension in both $\LL$ and $\widetilde{\LL}$. We have already shown that $U(\widetilde{\LL})$ is not noetherian, so it follows that $U(I\widetilde{\LL})$ is not noetherian by Proposition \ref{prop:finite codimension}. Therefore, $U(\LL)$ is not noetherian.
\end{proof}

We finish this section by briefly mentioning some other related Lie algebras.

\begin{dfn}\label{def:KN type}
    Let $C$ be an affine curve and let $\g$ be a finite-dimensional Lie algebra. We define the following Lie algebras:
    \begin{enumerate}
        \item The \emph{Lie algebra of differential operators of degree $\leq 1$ on $C$} is defined as the semidirect sum $\mc{D}^1(C) = \kk[C] \rtimes \LL(C)$. In other words, $\mc{D}^1(C) = \kk[C] \oplus \LL(C)$ as a vector space, with Lie bracket given by
        $$[(f_1,v_1),(f_2,v_2)] = (v_1(f_2) - v_2(f_1),[v_1,v_2]),$$
        where $f_1,f_2 \in \kk[C]$ and $v_1,v_2 \in \LL(C)$.
        \item The \emph{current algebra} $\overline{\g}(C) = \g \otimes_{\kk} \kk[C]$ has Lie bracket
        $$[x \otimes f, y \otimes g] = [x,y] \otimes fg,$$
        where $x,y \in \g$ and $f,g \in \kk[C]$.
    \end{enumerate}
    These Lie algebras, as well as $\LL(C)$, are called \emph{Krichever-Novikov type algebras} (cf. \cite[Definition 2.21]{Schlichenmaier}).
\end{dfn}

An easy consequence of Theorem \ref{thm:main} is that the enveloping algebra of a Krichever-Novikov type algebra is not noetherian.

\begin{cor}
    Krichever-Novikov type algebras have non-noetherian universal enveloping algebras.
\end{cor}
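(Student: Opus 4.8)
The plan is to reduce each of the three types of Krichever-Novikov type algebra to the already-established Theorem \ref{thm:main} by exhibiting, in each case, a Lie subalgebra whose enveloping algebra is known to be non-noetherian, and then invoke \cite[Lemma 1.7]{SierraWalton}: if a Lie algebra has a subalgebra with non-noetherian enveloping algebra, then its own enveloping algebra is non-noetherian. For $\LL(C)$ itself there is nothing to prove. The two remaining cases are $\mc{D}^1(C) = \kk[C] \rtimes \LL(C)$ and the current algebra $\overline{\g}(C) = \g \otimes_\kk \kk[C]$.

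For $\mc{D}^1(C)$, the key observation is that $\{0\} \oplus \LL(C)$ is a Lie subalgebra of $\mc{D}^1(C)$ isomorphic to $\LL(C)$: indeed, from the bracket formula $[(0,v_1),(0,v_2)] = (v_1(0) - v_2(0), [v_1,v_2]) = (0,[v_1,v_2])$, so the embedding $v \mapsto (0,v)$ is a Lie algebra homomorphism. Since $U(\LL(C))$ is not noetherian by Theorem \ref{thm:main}, neither is $U(\mc{D}^1(C))$.

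For the current algebra $\overline{\g}(C)$, I would first handle the case where $C = \Aa^1$ and $\g$ is one-dimensional, say $\g = \kk x$ with $[x,x] = 0$; then $\overline{\g}(\Aa^1) = \kk x \otimes \kk[t]$ is an infinite-dimensional abelian Lie algebra, whose enveloping algebra is a polynomial ring in infinitely many variables and hence non-noetherian. In general, pick any nonzero $x \in \g$ lying in an abelian subalgebra (for instance any one-dimensional subspace $\kk x$ is automatically an abelian subalgebra since $[x,x]=0$); then $\kk x \otimes \kk[C] \subseteq \overline{\g}(C)$ is a subalgebra with bracket $[x\otimes f, x \otimes g] = [x,x]\otimes fg = 0$, so it is abelian. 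Since $\kk[C]$ is an infinite-dimensional $\kk$-vector space (as $C$ is a curve), this subalgebra is infinite-dimensional abelian, so its enveloping algebra is a polynomial ring in infinitely many variables, which is non-noetherian; hence $U(\overline{\g}(C))$ is non-noetherian as well.

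I do not expect any genuine obstacle here: each reduction is a one-line embedding, and the only mild subtlety is being careful that the relevant subalgebra is genuinely infinite-dimensional (which holds because $\kk[C]$ is infinite-dimensional for any affine curve $C$) and that \cite[Lemma 1.7]{SierraWalton} applies in the stated generality (it does, requiring only that one Lie algebra be a subalgebra of the other). If one wished to avoid appealing to the abelian case separately, an alternative for $\overline{\g}(C)$ is to note that $\overline{\g}(C)$ contains a copy of an infinite-dimensional abelian subalgebra regardless of the structure of $\g$, so the argument is uniform. Thus all Krichever-Novikov type algebras have non-noetherian universal enveloping algebras.
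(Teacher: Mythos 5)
Your proposal is correct and follows essentially the same route as the paper: embed $\LL(C)$ into $\mc{D}^1(C)$ via $v \mapsto (0,v)$ and apply Theorem \ref{thm:main}, and for $\overline{\g}(C)$ exhibit an infinite-dimensional abelian subalgebra (the paper uses the span of $\{x \otimes f^n \mid n \in \NN\}$, you use $\kk x \otimes \kk[C]$, which is the same idea and if anything slightly cleaner, since it avoids needing $f$ nonconstant for the powers $f^n$ to be independent).
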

\begin{proof}
    Let $C$ be an affine curve and let $\g$ be a finite-dimensional Lie algebra. Consider $\overline{\g}(C)$ and $\mc{D}^1(C)$.
    
    To see that $U(\overline{\g}(C))$ is not noetherian, we simply note that $\overline{\g}(C)$ has an infinite-dimensional abelian Lie subalgebra, which clearly has a non-noetherian enveloping algebra. For example, we can take the subalgebra spanned by $\{x \otimes f^n \mid n \in \NN\}$, where $x \in \g \setminus \{0\}$ and $f \in \kk[C] \setminus \{0\}$.
    
    The non-noetherianity of $U(\mc{D}^1(C))$ follows directly from Theorem \ref{thm:main}, since $\mc{D}^1(C)$ contains $\LL(C)$ as a Lie subalgebra.
\end{proof}

\section{Subalgebras of $\W$ of infinite codimension}\label{sec:subalgebras}

We would like to prove that any infinite-dimensional subalgebra of a Krichever-Novikov algebra has a non-noetherian universal enveloping algebra. In order to achieve this, we first focus on subalgebras of $W_{\geq -1}$, with the eventual aim of extending the results to arbitrary Krichever-Novikov algebras using similar methods to those in Section \ref{sec:affine curves}.

A special case of Conjecture 0.1 of \cite{SierraWalton} is:

\begin{conj}\label{conj:subalgebras of W}
    Let $\mf{g}$ be an infinite-dimensional Lie subalgebra of $W_{\geq -1}$. Then $U(\mf{g})$ is not noetherian.
\end{conj}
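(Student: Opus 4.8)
The plan is to reduce Conjecture \ref{conj:subalgebras of W} to Conjecture \ref{conj:intro classification}. Indeed, if every infinite-dimensional subalgebra $\g \subseteq \W$ has finite codimension in some $L(f)$ with $f \in \kk[t] \setminus \{0\}$, then, since $U(L(f))$ is not noetherian (established below, both directly and via condition $(*)$), Proposition \ref{prop:finite codimension} applied to $\g \subseteq L(f)$ gives at once that $U(\g)$ is not noetherian. Moreover, Theorem \ref{thm:intro field of ratios} already provides \emph{unconditionally} an $f \in \kk[t]\setminus\{0\}$ with $\kk(f) = F(\g)$ and $\g \subseteq L(f)$, so the only missing ingredient is that this inclusion has finite codimension — which by Theorem \ref{thm:intro equivalent conditions} is equivalent to $R(\g)$ being a field, and by Corollary \ref{cor:R(g) is a field} the bare statement ``$R(\g)$ is a field'' already suffices for non-noetherianity. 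Thus the whole problem funnels into one structural assertion: \emph{for every infinite-dimensional Lie subalgebra $\g \subseteq \W$, the set of ratios $R(\g)$ is a subfield of $\kk(t)$.}

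I would first clear away the cases that are already handled: if $\g$ is graded the conjecture holds (as recorded in the introduction), and if $\g$ has finite codimension in $\W$ then Proposition \ref{prop:finite codimension} applied to $\g \subseteq \W$ together with Theorem \ref{thm:Sue} finishes it. So one may assume throughout that $\g$ is ungraded and of infinite codimension in $\W$. For such $\g$, the approach to showing $R(\g)$ is a field is to extract closure properties from the bracket. One sees directly that $R(\g)$ is stable under $r \mapsto \inv{r}$, and that for ratios sharing a common denominator $R(\g)$ is stable under addition of numerators. The bracket $[w_1\del, w_2\del] = (w_1 w_2' - w_1' w_2)\del$ converts a pair $w_1\del, w_2\del \in \g$ with $w_1/u = r_1$ and $w_2/u = r_2$ (for some $u\del \in \g$) into the element $(r_1 r_2' - r_1' r_2)\,u^2\del \in \g$, whose ratio against $u\del$ is $(r_1 r_2' - r_1' r_2)\,u \in R(\g)$; iterating such Wronskian identities, and exploiting the polynomiality of every vector field in $\W$, should force enough multiplicative and additive relations among ratios to close $R(\g)$ into a field. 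The same goal can be pursued from the $L(f)$ side by an argument in the spirit of \cite[Proposition 3.19]{PetukhovSierra}: bound, for each fixed denominator, the $\kk$-span of the numerators that can appear in $\g$, and show that all but finitely many dimensions of $L(f)$ are forced to lie in $\g$.

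I expect the main obstacle to be precisely this point — ruling out the possibility that $R(\g)$ is a proper subset of $\kk(f)$ that fails to be a subfield, equivalently that $\g$ sits inside $L(f)$ with infinite codimension. This is the least understood regime of the structure theory of subalgebras of $\W$: one must show that a Lie subalgebra cannot be simultaneously infinite-dimensional and ``sparse'' in its ratios. If this cannot be done directly, the fallback is to bypass the classification and instead prove that \emph{every} infinite-dimensional $\g \subseteq \W$ satisfies condition $(*)$ — analysing the image $\Phi(U(\g)) \subseteq A_1(\kk(y))[\inv{t}]$ and producing inside it the same non-finitely-generated right ideal that obstructs noetherianity for $L(f)$. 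Since the Veronese subalgebras, their subalgebras, and the $L(f)$ all satisfy $(*)$, it is plausible that $(*)$ holds for all infinite-dimensional subalgebras of $\W$, and establishing this would prove the conjecture without resolving the finer classification question.
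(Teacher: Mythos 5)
You are being asked to prove Conjecture \ref{conj:subalgebras of W}, and the paper itself does not prove it: it is explicitly left open, with only partial results (graded subalgebras via Proposition \ref{prop:graded not noetherian}, subalgebras of finite codimension in $\W$ via Proposition \ref{prop:finite codimension}, the subalgebras $L(f,g)$, subalgebras with $R(\g)$ a field via Corollary \ref{cor:R(g) is a field}, and subalgebras satisfying condition $(*)$ via Theorem \ref{thm:not noetherian}). Your proposal faithfully reproduces the paper's own reduction strategy --- Theorem \ref{thm:subfield generated by R(g)} gives $\g \subseteq L(f)$ unconditionally, Theorem \ref{thm:S is a field?} shows that finite codimension in $L(f)$ is equivalent to $R(\g)$ being a field, and Propositions \ref{prop:finite codimension} and \ref{prop:U(L(f,g) not noetherian} would then finish --- but it does not close the one gap that matters, and you are candid that it does not.

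Concretely, the step that fails is the assertion that iterated Wronskian identities ``should force enough multiplicative and additive relations among ratios to close $R(\g)$ into a field.'' The set $R(\g)$ is closed under inversion, under scalar multiplication, and under addition of ratios that share a common denominator realised in $\g$ (since $\g$ is a subspace), and the bracket does produce the new ratio $(r_1 r_2' - r_1' r_2)u$ from $r_1 = w_1/u$ and $r_2 = w_2/u$. But none of these operations manufactures the product $r_1 r_2$ or the sum of two ratios whose denominators cannot be matched inside $\g$: obtaining $w_1w_2/(u_1u_2)$ as a ratio would require elements such as $w_1w_2\del$ and $u_1u_2\del$ to lie in $\g$, and Lie subalgebras of $\W$ are not closed under multiplication of coefficient functions. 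This is exactly the regime the paper identifies as not understood: an infinite-dimensional $\g$ sitting inside $L(f)$ with infinite codimension, so that $R(\g)$ is a proper non-field subset of $\kk(f)$; no example is known, but none is excluded. Your fallback --- proving that every infinite-dimensional subalgebra of $\W$ satisfies $(*)$, or the stronger combinatorial condition $(\dagger)$ --- is likewise precisely the paper's stated open problem (``we have not been able to show that this will always be the case''). In short, your proposal is an accurate map of the known reductions and correctly locates the obstruction, but it is not a proof; the statement remains a conjecture.
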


The main difficulty in proving this conjecture is that we do not know what a general subalgebra of $\W$ looks like. Therefore, we first focus on graded subalgebras of $W_{\geq -1}$ and show that Conjecture \ref{conj:subalgebras of W} holds for these.

\subsection{Veronese and graded subalgebras}\label{subsec:Veronese}

Since many of the details are straightforward, we omit some of the proofs in this subsection.

Note that $\W$ is a graded Lie algebra. Under the standard grading of $\W$, the element $e_n$ is homogeneous of degree $n$. However, this grading is not unique: fix $x \in \kk^*$, and define $e_n(x) = (t - x)^{n + 1}\del$ for all $n \geq -1$. It is easy to see that
$$[e_n(x), e_m(x)] = (m - n) e_{n + m}(x).$$
Hence, we could also have chosen a different grading of $\W$, where $e_n(x)$ is a homogeneous element of degree $n$. We call this grading the \emph{grading of $\W$ based at $x$}. Note that $e_n$ is no longer homogeneous under this grading. We can therefore consider Veronese subalgebras with respect to the different choices of grading of $\W$.

\begin{dfn}\label{def:Veronese}
    Let $d \geq 2$ be an integer and let $x \in \kk$. The \emph{$d$-Veronese subalgebra of $W_{\geq -1}$ based at $x$}, denoted $\Ver_d(x)$, is the Lie subalgebra of $W_{\geq -1}$ with basis
    $$\{e_0(x), e_d(x), e_{2d}(x), \ldots\}.$$
    For brevity, when $x = 0$ we simply write $\Ver_d$ instead of $\Ver_d(0)$.
\end{dfn}

\begin{rem}
    Veronese subalgebras are the simplest infinite-dimensional subalgebras of $W_{\geq -1}$ of infinite codimension. They are therefore the first examples of subalgebras of $W_{\geq -1}$ for which Proposition \ref{prop:finite codimension} does not apply.
\end{rem}

It is rather straightforward to prove that these subalgebras have non-noetherian universal enveloping algebras; we merely have to notice that they are isomorphic to $W_{\geq 0}$.

\begin{lem}\label{lem:enveloping algebra of Veronese}
    Let $d \geq 2$ be an integer and let $x \in \kk$. Then $U(\Ver_d(x))$ is not noetherian.
\end{lem}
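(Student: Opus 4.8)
The plan is to exhibit an explicit Lie algebra isomorphism $\Ver_d(x) \cong W_{\geq 0}$ and then invoke Corollary \ref{cor:finite codimension Witt}.

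First I would record the bracket relations inside $\Ver_d(x)$. Since $[e_n(x), e_m(x)] = (m-n)e_{n+m}(x)$, the chosen basis elements $e_{kd}(x)$ (for $k \geq 0$) satisfy
\[
[e_{kd}(x), e_{ld}(x)] = d(l-k)\, e_{(k+l)d}(x).
\]
Next I would define the linear map $\phi \colon \Ver_d(x) \to W_{\geq 0}$ determined by $\phi(e_{kd}(x)) = d\, e_k$ for all $k \geq 0$. This is a bijection on basis elements, hence a linear isomorphism, and the scalar $d$ is exactly what makes the structure constants match: $[\phi(e_{kd}(x)),\phi(e_{ld}(x))] = d^2(l-k)e_{k+l} = \phi\big(d(l-k)e_{(k+l)d}(x)\big) = \phi([e_{kd}(x),e_{ld}(x)])$. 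So $\phi$ is an isomorphism of Lie algebras.

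Finally, since $W_{\geq 0} = \W(t)$ in the notation established above, Corollary \ref{cor:finite codimension Witt} applied with $f = t$ shows that $U(W_{\geq 0})$ is not noetherian, and therefore $U(\Ver_d(x)) \cong U(W_{\geq 0})$ is not noetherian.

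There is essentially no hard step here; the only thing requiring a moment's care is choosing the rescaling factor correctly in the definition of $\phi$, since the naive assignment $e_{kd}(x) \mapsto e_k$ fails to respect the bracket, being off by a factor of $d$. Note also that the answer is independent of $x$: the grading of $\W$ based at $x$ produces the same abstract bracket relations as the standard one, so one could alternatively first reduce to the case $x = 0$ via the change of variable $t \mapsto t - x$, though this is not needed for the argument above.
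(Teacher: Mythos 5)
Your proof is correct and is essentially identical to the paper's: the paper rescales by setting $f_n = \frac{1}{d}e_{nd}(x)$ and maps $f_n \mapsto e_n$, which is exactly your map $e_{kd}(x) \mapsto d\,e_k$, and then likewise concludes via Corollary \ref{cor:finite codimension Witt}.
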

\begin{proof}
    Let $f_n = \frac{1}{d} e_{nd}(x) \in \Ver_d(x)$. Then $\{f_n \mid n \in \NN\}$ is a basis for $\Ver_d(x)$, and
    $$[f_n,f_m] = \frac{1}{d^2} [e_{nd}(x),e_{md}(x)] = \frac{1}{d^2} (md - nd) e_{(n+m)d}(x) = (m - n)f_{n+m}.$$
    Therefore, $\Ver_d(x) \cong W_{\geq 0}$, where the isomorphism maps $f_n \mapsto e_n$. By Corollary \ref{cor:finite codimension Witt}, $U(W_{\geq 0})$ is not noetherian, so $U(\Ver_d(x))$ is not noetherian.
\end{proof}

Our next goal is to prove that general infinite-dimensional graded subalgebras of $\W$ have non-noetherian universal enveloping algebras. For simplicity, we only consider the standard grading of $\W$, but the same results will hold for the different choices of grading. This is because the map
\begin{align*}
    \W &\to \W^{(x)} \\
    e_n &\mapsto e_n(x)
\end{align*}
is a graded isomorphism of Lie algebras, where $\W^{(x)}$ is $\W$ with grading based at $x$.

Graded subalgebras of $W_{\geq -1}$ are some of the easiest to understand since they have a basis which is a subset of $\{e_n \mid n \geq -1\}$. We will mostly be concerned with leading terms of elements of $\W$, so we establish the following notation.

\begin{ntt}
    For $\lambda \in \kk$ and $n \in \NN$, when we write $a = \lambda e_n + \ldots$, we mean that there exist $\alpha_1, \ldots, \alpha_{n+1} \in \kk$ such that
    $$a = \lambda e_n + \alpha_1 e_{n-1} + \ldots + \alpha_n e_0 + \alpha_{n+1} e_{-1}.$$
    For non-homogeneous $a \in \W$, we write $\deg(a) = n$ to mean $a = \lambda e_n + \ldots$ for some $\lambda \in \kk^*$.
\end{ntt}

\begin{rem}
    Let $a,b \in W_{\geq -1}$ with $\deg(a) \neq \deg(b)$. Then $\deg([a,b]) = \deg(a) + \deg(b)$.
\end{rem}

The following result gives a way of generating elements of high degree in a subalgebra $\g \subseteq \W$.

\begin{lem}\label{lem:gcd}
    Let $\g$ be a subalgebra of $W_{\geq -1}$, let $a,b \in \g$ and let $n = \deg(a)$, $m = \deg(b)$. Suppose $n,m \geq 1$ and $n \neq m$, and let $d = \gcd(n,m)$. Then there exists $k \in \NN$ such that for all $\ell \geq k$, there is an element $c_\ell \in \g$ with $\deg(c_\ell) = \ell d$.
\end{lem}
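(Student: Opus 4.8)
The idea is to use the bracket to produce elements whose degrees form an arithmetic progression with common difference $d = \gcd(n,m)$, and then invoke the numerical semigroup structure (the Chicken McNugget / Sylvester--Frobenius phenomenon) to get \emph{every} sufficiently large multiple of $d$. The only subtlety is that brackets can drop in degree when the leading terms happen to have equal degree, so one must be slightly careful to keep degrees distinct.

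\begin{proof}[Proof sketch]
Write $n = \deg(a)$, $m = \deg(b)$ with $n \neq m$ and both $\geq 1$, and set $d = \gcd(n,m)$. Since $\deg(a) \neq \deg(b)$, the remark preceding the lemma gives $\deg([a,b]) = n + m$. More generally, if $c \in \g$ has $\deg(c) = r$ with $r \notin \{n, m\}$ wait---more carefully: bracketing $c$ with $a$ gives an element of degree $r + n$ provided $r \neq n$, and similarly with $b$. So starting from $a$ and $b$ and repeatedly bracketing, I can reach any degree of the form $n + (\text{nonneg. combination of } n \text{ and } m)$ as long as I never land exactly on $n$ or $m$ at an intermediate step---and since all these degrees are $\geq n + \min(n,m) > m \geq \min(n,m)$ once we have bracketed at least once more, this is not an obstruction. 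Concretely: for any $p, q \in \NN$ not both zero, the element obtained by bracketing $p$ copies of $a$ and $q$ copies of $b$ together (in a suitable order, always bracketing the smallest-degree available pair last, or simply noting inductively that the running degree strictly increases and exceeds $\max(n,m)$ after the first step) lies in $\g$ and has degree $pn + qm$, hence is nonzero.

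Thus $\g$ contains elements of every degree in the numerical set
$$S = \{pn + qm \mid p, q \in \NN, (p,q) \neq (0,0)\} \cup \{n, m\} = \{pn + qm \mid p,q \in \NN\} \setminus \{0\}.$$
By the classical fact about numerical semigroups generated by $n/d$ and $m/d$ (which are coprime), the set $\{p(n/d) + q(m/d) \mid p,q \in \NN\}$ contains every integer $\geq (n/d - 1)(m/d - 1)$; equivalently, $S$ contains every multiple $\ell d$ of $d$ with $\ell \geq (n/d - 1)(m/d - 1)$. Taking $k = (n/d - 1)(m/d-1)$ (or any larger integer), we conclude that for all $\ell \geq k$ there is $c_\ell \in \g$ with $\deg(c_\ell) = \ell d$, as required.
\end{proof}

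The one point needing care is the claim that iterated brackets genuinely realise degree $pn + qm$ without accidental cancellation of the leading term; this is handled by the observation that as soon as one performs a single bracket the degree jumps above $\max(n,m)$, so every subsequent bracket is between elements of distinct degrees and the remark guarantees the degrees add. I expect this bookkeeping---choosing an order of bracketing so that no two intermediate elements share a degree---to be the only mild obstacle, and it is entirely routine.
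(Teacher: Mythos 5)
Your argument follows essentially the same route as the paper's: realize degrees $in+jm$ by iterated brackets (using, as you note, that after the first bracket every intermediate degree exceeds $\max(n,m)$, so the remark on degrees of brackets applies at each step), then conclude by elementary number theory that all sufficiently large multiples of $d$ arise this way; the paper merely carries out the Frobenius/Sylvester step by hand with the Euclidean algorithm rather than citing it. One correction: your set $S$ must be restricted to $p,q\geq 1$ (together with the degrees $n$ and $m$ of $a$ and $b$ themselves), since for $p\geq 2$, $q=0$ there is no way to ``bracket $p$ copies of $a$ together'' --- $[a,a]=0$ --- and degree $pn$ need not be attained in $\g$. This only forces $k$ to be increased by $(n+m)/d$, so the conclusion is unaffected.
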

\begin{proof}
    For all $i,j \geq 1$, by taking Lie brackets of $a$ and $b$ we see that there is an element $h_{ij} \in \g$ such that $\deg(h_{ij}) = in + jm$. For example, we can set $h_{ij} = \ad_b^{j-1}(\ad_a^i(b))$.
    
    By the Euclidean algorithm, there exist $u,v \in \NN$ such that
    $$un - vm = d.$$
    Let $r = \frac{n}{d}$, let $s = n + ((r-1)v + 1)m$, and let $k = \frac{s}{d}$. Setting $c_k = h_{1,(r-1)v + 1} \in \g$, we see that $\deg(c_k) = s = kd$.
    
    Let $\ell \in \NN$ such that $k \leq \ell < k + r$, and let $q = \ell - k$. We have
    \begin{align*}
        \ell d &= (k + q)d = s + qd = (n + ((r - 1)v + 1)m) + q(un - vm) \\
        &= (qu + 1)n + ((r - q - 1)v + 1)m
    \end{align*}
    with $qu + 1 \geq 1$ and $(r - q - 1)v + 1 \geq 1$, since $0 \leq q < r$. Setting
    $$c_\ell = h_{qu+1,(r-q-1)v + 1} \in \g,$$
    we see that $\deg(c_\ell) = \ell d$.
    
    Now consider $\ell \geq k + r$. There exists $N \in \NN$ such that $k \leq \ell - Nr < k + r$. We have already constructed $c_{\ell - Nr} \in \g$. Define $c_\ell = \ad_a^N(c_{\ell-Nr}) \in \g$. Then
    $$\deg(c_\ell) = (\ell - Nr)d + Nn = \ell d - Nrd + Nn = \ell d,$$
    since $rd = n$, so we are done.
\end{proof}

When applied to graded subalgebras, Lemma \ref{lem:gcd} yields the following corollary.

\begin{cor}\label{cor:subalgebra in Veronese}
    Let $\mf{g}$ be an infinite-dimensional graded Lie subalgebra of $W_{\geq -1}$. Then either $\mf{g}$ has finite codimension in $W_{\geq -1}$ or $\mf{g}$ has finite codimension in $\Ver_d$ for some $d \geq 2$. \hfill \qed
\end{cor}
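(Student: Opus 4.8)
The plan is to distil the graded structure of $\g$ into a single integer
$$d = \gcd\{n \geq 1 \mid e_n \in \g\}$$
and to prove that $\g$ contains $e_{\ell d}$ for all sufficiently large $\ell$; the dichotomy in the statement then falls out according to whether $d = 1$ or $d \geq 2$.

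First I would record that, since $\g$ is graded, the set $S = \{n \geq -1 \mid e_n \in \g\}$ is a basis of $\g$, and that $S \setminus \{-1,0\}$ is infinite because $\g$ is infinite-dimensional; in particular its gcd $d$ is well defined and there is a finite subset $n_1,\dots,n_s \in S$ of integers $\geq 1$ with $\gcd(n_1,\dots,n_s) = d$. Now I would show by a gcd-reduction argument, built on Lemma \ref{lem:gcd}, that $e_{\ell d} \in \g$ for all $\ell \gg 0$. Applying Lemma \ref{lem:gcd} to the homogeneous elements $e_{n_1}, e_{n_2}$ produces, for each large $\ell$, a \emph{nonzero homogeneous} element of $\g$ of degree $\ell \gcd(n_1,n_2)$, which in a graded subalgebra is forced to be a nonzero scalar multiple of $e_{\ell\gcd(n_1,n_2)}$. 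If $\gcd(n_1,n_2) = d$ we are done; otherwise, writing $d_1 = \gcd(n_1,n_2)$, I feed some $e_{\ell d_1} \in \g$ (with $\ell$ large) together with a suitable $n_i$ back into Lemma \ref{lem:gcd}, choosing $\ell$ large and coprime to $n_i / \gcd(n_i,d_1)$ so that $\gcd(\ell d_1, n_i) = \gcd(d_1, n_i)$, a proper divisor of $d_1$. Since the running common divisor strictly decreases at each step while remaining a multiple of $d$, after finitely many steps it equals $d$, yielding a constant $k$ with $e_{\ell d} \in \g$ for all $\ell \geq k$.

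With this in hand the case analysis is short. If $d = 1$, then $\g \supseteq \{e_\ell \mid \ell \geq k\}$, so $\g$ has codimension at most $k+1$ in $W_{\geq -1}$. If $d \geq 2$, I first observe that $e_{-1} \notin \g$: otherwise $[e_{-1}, e_{\ell d}]$ is a nonzero multiple of $e_{\ell d - 1}$, which would lie in $\g$ for $\ell \geq \max(k,1)$, forcing $d \mid \ell d - 1$, an absurdity. Hence every element of $S$ apart from $0$ is $\geq 1$ and divisible by $d$, so $\g \subseteq \Ver_d$; together with $e_{\ell d} \in \g$ for $\ell \geq k$ this gives $\dim_\kk(\Ver_d/\g) \leq k$, as required.

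The only genuine subtlety is the gcd-reduction: one must ensure that iterating Lemma \ref{lem:gcd} drives the common divisor all the way down to the gcd $d$ of the full (infinite) set $S$, and this rests on the elementary observation that for $p \in S$ with $\gcd(p,d_i) < d_i$ one can select a large $\ell$ with $\gcd(\ell d_i, p) = \gcd(d_i, p)$ (take $\ell$ a prime exceeding $p$ and the relevant threshold). Everything else — checking homogeneity is preserved, bounding codimensions — is routine bookkeeping.
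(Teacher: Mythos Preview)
Your proof is correct and follows the same route the paper intends: the paper simply marks the corollary with \qed immediately after Lemma \ref{lem:gcd}, treating it as a direct consequence, and what you have written is exactly the natural way to flesh that out---passing to the support set $S$, invoking Lemma \ref{lem:gcd} on homogeneous generators (where the outputs are automatically homogeneous), and iterating to drive the running gcd down to $d$. The one substantive detail the paper suppresses and you supply is the gcd-reduction step (ensuring $\gcd(\ell d_1,n_i)=\gcd(d_1,n_i)$ by choosing $\ell$ coprime to $n_i/\gcd(n_i,d_1)$), and your handling of it is fine; the exclusion of $e_{-1}$ when $d\geq 2$ is likewise the obvious argument.
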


Combining the results above, it follows that infinite-dimensional graded subalgebras of $W_{\geq -1}$ have non-noetherian enveloping algebras.

\begin{prop}\label{prop:graded not noetherian}
    Let $\mf{g}$ be an infinite-dimensional graded Lie subalgebra of $W_{\geq -1}$. Then $U(\mf{g})$ is not noetherian. \hfill \qed
\end{prop}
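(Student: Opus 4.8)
The plan is to reduce immediately to the two cases provided by Corollary \ref{cor:subalgebra in Veronese}. Since $\g$ is an infinite-dimensional graded subalgebra of $\W$, that corollary tells us that either $\g$ has finite codimension in $\W$, or $\g$ has finite codimension in $\Ver_d$ for some $d \geq 2$. In both cases the strategy is identical: exhibit a Lie algebra containing $\g$ with finite codimension whose universal enveloping algebra is already known to be non-noetherian, and then invoke Proposition \ref{prop:finite codimension}.

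First I would treat the case $\dim_\kk(\W/\g) < \infty$. Here $U(\W)$ is not noetherian: indeed $W_+ \subseteq \W$ and $U(W_+)$ is not noetherian by Theorem \ref{thm:Sue}, so $U(\W)$ is not noetherian by \cite[Lemma 1.7]{SierraWalton} (equivalently, this is Corollary \ref{cor:finite codimension Witt} with $f = 1$). Applying Proposition \ref{prop:finite codimension} to the inclusion $\g \subseteq \W$ then yields that $U(\g)$ is not noetherian. Next I would treat the case $\dim_\kk(\Ver_d/\g) < \infty$ for some $d \geq 2$. By Lemma \ref{lem:enveloping algebra of Veronese}, $U(\Ver_d)$ is not noetherian, and Proposition \ref{prop:finite codimension} applied to $\g \subseteq \Ver_d$ again gives that $U(\g)$ is not noetherian. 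Since these two cases are exhaustive by Corollary \ref{cor:subalgebra in Veronese}, this completes the argument.

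I expect this proof to be essentially routine, as all the substantive work has been done upstream — in Lemma \ref{lem:gcd} and Corollary \ref{cor:subalgebra in Veronese} (which pin down the structure of graded subalgebras), in Lemma \ref{lem:enveloping algebra of Veronese}, and in Proposition \ref{prop:finite codimension}. The only point that warrants care is checking that Corollary \ref{cor:subalgebra in Veronese} genuinely covers \emph{every} infinite-dimensional graded subalgebra, and that its conclusion "$\g$ has finite codimension in $\Ver_d$" is exactly the hypothesis needed to apply Proposition \ref{prop:finite codimension} with the smaller algebra $\h = \g$ and the ambient algebra taken to be $\Ver_d$ (respectively $\W$); there is no further subtlety. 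Thus I do not anticipate any real obstacle in assembling the proof.
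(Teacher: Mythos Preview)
Your proposal is correct and matches the paper's own (implicit) argument: the proposition is stated with a \qed because it follows immediately by combining Corollary \ref{cor:subalgebra in Veronese} with Proposition \ref{prop:finite codimension}, Corollary \ref{cor:finite codimension Witt}, and Lemma \ref{lem:enveloping algebra of Veronese}, exactly as you describe.
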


\subsection{The subalgebras $L(f,g)$}\label{subsec:L(f,g)}

We turn our attention to subalgebras of $W_{\geq -1}$ which are not necessarily graded. If $\g \subseteq \Ver_d(x)$ for some $x \in \kk$, where $d \geq 2$ is maximal, then consider the isomorphism
$$\varphi \colon \Ver_d(x) \xrightarrow{\sim} W_{\geq 0}$$
from Lemma \ref{lem:enveloping algebra of Veronese}. By maximality of $d$, it follows that $\varphi(\g)$ is not contained in any Veronese subalgebra.

Veronese subalgebras, of course, have infinite codimension in $\W$. It is not immediately obvious that there are any infinite-dimensional subalgebras of $\W$ of infinite codimension which do not have finite codimension in a Veronese subalgebra; the goal of this subsection is to provide examples of such. By the previous paragraph, without loss of generality we seek for an infinite-dimensional subalgebra of $\W$ of infinite codimension which is not contained in a Veronese subalgebra.

The following result gives a useful method to determine whether some element of $W_{\geq -1}$ is contained in a Veronese subalgebra.

\begin{lem}\label{lem:contained in Veronese?}
    Let $a = e_n + \alpha e_{n-1} + \ldots \in W_{\geq -1}$. If $a \in \Ver_d(x)$ for some $d \geq 2$ and some $x \in \kk$, then $\alpha = -(n + 1)x$. In particular, either $a \in \Ver_d(\frac{-\alpha}{n + 1})$ for some $d \geq 2$ or $a$ is not contained in any Veronese subalgebra.
\end{lem}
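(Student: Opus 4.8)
The plan is to work in the completion $\widehat{A_P} \cong \kk[[s]]$ as in the earlier lemmas, but here the computation is elementary enough to do directly with formal power series or even just with the top two coefficients. First I would recall that $\Ver_d(x)$ has $\kk$-basis $\{e_0(x), e_d(x), e_{2d}(x),\ldots\}$ where $e_m(x) = (t-x)^{m+1}\del$, so every element of $\Ver_d(x)$ is of the form $p((t-x)^d)(t-x)\del$ for some polynomial $p$; equivalently, writing $u = t - x$, an element of $\Ver_d(x)$ is $q(u)\,\del$ where $q \in \kk[u]$ is a $\kk$-linear combination of $u, u^{d+1}, u^{2d+1}, \ldots$, i.e. $q$ only involves exponents congruent to $1 \pmod d$.

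Next I would translate the hypothesis $a = e_n + \alpha e_{n-1} + \ldots$ into the variable $u = t-x$. We have $a = g(t)\del$ with $g(t) = t^{n+1} + \alpha t^n + (\text{lower order})$. Substituting $t = u + x$ and expanding by the binomial theorem, the coefficient of $u^{n+1}$ is $1$ and the coefficient of $u^n$ is $(n+1)x + \alpha$. So if $a \in \Ver_d(x)$, then $g(u+x)$ must be supported on exponents $\equiv 1 \pmod d$; since $n+1$ occurs with nonzero coefficient we need $n + 1 \equiv 1 \pmod d$, i.e. $d \mid n$, and then $n \equiv 0 \not\equiv 1 \pmod d$ (as $d \geq 2$), so the coefficient of $u^n$ must vanish: $(n+1)x + \alpha = 0$, giving $\alpha = -(n+1)x$. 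This proves the first claim.

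For the "in particular" statement: if $n + 1 \neq 0$ in $\kk$ (which always holds since $\operatorname{char}\kk = 0$ and $n \geq -1$, so $n+1 \geq 0$; the only subtlety is $n = -1$, but then $a = e_{-1} = \del$ and we should handle it, or note the statement's phrasing $a = e_n + \alpha e_{n-1} + \ldots$ implicitly has $n \geq 0$), the value $x$ is forced to be $\frac{-\alpha}{n+1}$ by the first part. Hence the only candidate base point is $x_0 = \frac{-\alpha}{n+1}$, so either $a \in \Ver_d(x_0)$ for some $d \geq 2$, or $a$ lies in no Veronese subalgebra at all. I would also remark that the case $x = 0$ recovers the statement that $a \in \Ver_d$ forces $\alpha = 0$.

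The computation is entirely routine; there is no real obstacle. The only point requiring a little care is the bookkeeping in the binomial expansion to confirm that the coefficient of $u^n$ in $(u+x)^{n+1} + \alpha(u+x)^n + \cdots$ is exactly $(n+1)x + \alpha$ — the lower-order terms $\alpha_2 t^{n-1} + \cdots$ contribute only to $u^j$ with $j \leq n-1$, so they do not affect the $u^{n+1}$ or $u^n$ coefficients. Everything else follows formally.
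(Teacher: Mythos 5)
Your proof is correct and is essentially the same as the paper's: both arguments amount to a direct coefficient comparison between the standard basis and the basis of monomials in $u = t-x$, using $d \geq 2$ to force the degree-$(n-1)$ (equivalently $u^n$) coefficient, which yields $\alpha = -(n+1)x$. The paper expands $e_n(x) + \beta e_{n-d}(x) + \ldots$ in powers of $t$ rather than expanding $a$ in powers of $u$, but this is the same computation read in the opposite direction.
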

\begin{proof}
    Suppose $a \in \Ver_d(x)$ for some $d \geq 2$ and some $x \in \kk$. Then
    $$a = e_n(x) + \beta e_{n-d}(x) + \ldots,$$
    for some $\beta \in \kk$. Therefore, we have
    $$a = (t - x)^{n + 1}\del + \beta (t - x)^{n - d + 1}\del + \ldots = e_n - (n + 1)x e_{n-1} + \ldots \quad (\text{since } d \geq 2)$$
    and hence we see that $\alpha = -(n + 1)x$.
\end{proof}

We now define a new family of subalgebras of $\W$. Members of this family will provide examples of infinite-dimensional subalgebras of $\W$ of infinite codimension which are not contained in any Veronese subalgebra.

\begin{ntt}
    For $f,g \in \kk[t] \setminus \{0\}$, we let $L(f,g)$ be the subspace of $W_{\geq -1}$ spanned by $\{f^n g\del \mid n \in \NN\}$. In other words, $L(f,g) = \kk[f]g\del$.
\end{ntt}

Note that if $\deg(f) > 1$, then $L(f,g)$ has infinite codimension in $W_{\geq -1}$. The next lemma characterises when $L(f,g)$ is a Lie algebra.

\begin{lem}\label{lem:infinite codimension subalgebra}
    Let $f,g \in \kk[t]$. Then $L(f,g)$ is a Lie subalgebra of $W_{\geq -1}$ if and only if $f'g \in \kk[f]$ (i.e. $f'g = h(f)$ for some $h \in \kk[t]$). Furthermore, if $f'g = h(f)$, then $L(f,g) \cong \W(h)$.
\end{lem}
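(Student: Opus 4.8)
The plan is to compute the Lie bracket of two basis elements of $L(f,g)$ and see precisely when it lands back inside $L(f,g)$, then recognise the resulting algebra. Write $v_n = f^n g\del$. Using the bracket formula $[p\del,q\del] = (pq'-p'q)\del$, I would compute
$$[v_n,v_m] = \bigl(f^n g (f^m g)' - (f^n g)' f^m g\bigr)\del.$$
Expanding $(f^m g)' = m f^{m-1} f' g + f^m g'$ and similarly for the other term, the $f^n f^m g g'$ pieces cancel, leaving
$$[v_n,v_m] = (m-n) f^{n+m-1} f' g \cdot g\, \del = (m-n)\, f^{n+m-1} (f'g)\, g\, \del.$$
Let me double-check the bookkeeping: $f^n g\cdot(m f^{m-1}f'g + f^m g') - (n f^{n-1}f'g + f^n g')\cdot f^m g = m f^{n+m-1}f'g^2 + f^{n+m}gg' - n f^{n+m-1}f'g^2 - f^{n+m}gg' = (m-n)f^{n+m-1}f'g^2$. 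So $[v_n,v_m] = (m-n) f^{n+m-1}(f'g) g\,\del$. This is the key identity.

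Next I would use this identity to prove both directions. For the ``if'' direction, suppose $f'g = h(f)$ for some polynomial $h$. Then $[v_n,v_m] = (m-n) f^{n+m-1} h(f) g\,\del$, and since $f^{n+m-1}h(f) \in \kk[f]$, this is a $\kk$-linear combination of elements $f^k g\del$, hence lies in $L(f,g)$; thus $L(f,g)$ is closed under the bracket. For the ``only if'' direction, I would argue contrapositively: if $L(f,g)$ is a Lie subalgebra, then in particular $[v_0,v_1] = f'g\cdot g\,\del = (f'g)g\,\del$ must lie in $L(f,g) = \kk[f]g\del$. Since $W_{\geq -1}$ is torsion-free as a $\kk[t]$-module (it is $\kk[t]\del$), and $g \neq 0$, the condition $(f'g)g\del \in \kk[f]g\del$ forces $f'g \in \kk[f]$. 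One subtlety to handle: strictly the hypothesis should exclude $g = 0$ (if $g = 0$ then $L(f,g) = 0$ trivially), and when $f$ is constant $L(f,g) = \kk g\del$ is one-dimensional and the statement is vacuously fine since $f' = 0 \in \kk[f]$; I would note these edge cases briefly or assume $f,g \neq 0$ as in the notation.

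For the final isomorphism claim, assume $f'g = h(f)$ and I want $L(f,g) \cong W_{\geq -1}(h) = \kk[t]h\del$. Define a linear map $\psi\colon L(f,g) \to W_{\geq -1}(h)$ on basis elements by $\psi(f^n g\del) = t^n h\del$ (equivalently, $\psi$ is induced by $f \mapsto t$, $g\del \mapsto h\del$). This is clearly a bijection of vector spaces since $\{f^n g\del\}_{n\geq 0}$ and $\{t^n h\del\}_{n\geq 0}$ are bases. To check it is a Lie homomorphism, compute the bracket in $W_{\geq -1}(h)$: $[t^n h\del, t^m h\del] = (t^n h (t^m h)' - (t^n h)' t^m h)\del = (m-n)t^{n+m-1}h'h\cdot h\,\del$ — wait, that is not right; let me recompute by the same cancellation as before with $f\leadsto t$, $g\leadsto h$: $[t^n h\del, t^m h\del] = (m-n)t^{n+m-1}h' h^2\del$. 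That does not obviously match $(m-n)f^{n+m-1}h(f)g\del$ under $\psi$, so I need to be more careful: the natural identification is $L(f,g) = \kk[f]\cdot g\del$ with bracket governed by the derivation $f^n g\del$ acting, and the cleanest route is to observe that $L(f,g)$ is a module over $\kk[f]$ and the bracket is $\kk[f]$-bilinear up to the derivation term. I would instead directly verify $\psi([v_n,v_m]) = [\psi(v_n),\psi(v_m)]$ using $[v_n,v_m] = (m-n)f^{n+m-1}h(f)g\del$, so $\psi([v_n,v_m]) = (m-n)t^{n+m-1}h(t)\cdot h\del$, and compare with $[t^n h\del, t^m h\del]$. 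The discrepancy means $\psi$ as naively defined is wrong; the correct map should send $f^n g\del \mapsto t^n h\del$ only after checking the structure constants genuinely agree, which they do precisely because $h' h$ plays the role that $f'g = h(f)$ plays — indeed in $W_{\geq -1}(h)$ we have $h' \cdot h$, and $t^{n+m-1} h' h\cdot h\del$; hmm, I expect the resolution is that $W_{\geq -1}(h) = \kk[t]h\del$ has basis $t^j h\del$ and $[t^i h\del, t^j h\del]=(t^i h(t^j h)'-(t^i h)'t^j h)\del = ((j-i)t^{i+j-1}h h' \cdot)\ldots$; I will reconcile the constants carefully in the writeup. The main obstacle is exactly this: pinning down the isomorphism $L(f,g)\cong W_{\geq -1}(h)$ so that the structure constants match on the nose, rather than the two easy implications about closure under bracket.
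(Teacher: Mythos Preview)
Your computation of the bracket $[v_n,v_m] = (m-n)f^{n+m-1}f'g^2\,\del$ and the two directions of the equivalence are correct and match the paper's argument exactly.

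The confusion about the isomorphism is caused by a single arithmetic slip. When you substitute $f \leadsto t$ and $g \leadsto h$ into your bracket formula, you must also substitute $f' \leadsto t' = 1$, not $f' \leadsto h'$. Thus
\[
[t^n h\del, t^m h\del] = (m-n)\,t^{n+m-1}\cdot 1 \cdot h^2\,\del = (m-n)\,t^{n+m-1}h(t)\,h\del,
\]
with no stray $h'$. This is precisely $\psi\bigl([v_n,v_m]\bigr) = (m-n)\,t^{n+m-1}h(t)\,h\del$, as you correctly computed on the other side. So the map $\psi\colon p(f)g\del \mapsto p(t)h\del$ is already the desired Lie algebra isomorphism, exactly as the paper asserts; there is nothing further to reconcile.
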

\begin{proof}
    Note that
    $$[f^ng\del,f^mg\del] = (m - n)f^{n+m-1}f'g^2\del.$$
    It is now clear that $L(f,g) = \kk[f]g\del$ is closed under the Lie bracket of $\W$ if and only if $f'g \in \kk[f]$.
    
    For the final sentence, suppose $f'g = h(f)$. We can easily check that the map
    \begin{align*}
        L(f,g) &\to \W(h) \\
        f^ng\del &\mapsto t^nh\del
    \end{align*}
    is an isomorphism of Lie algebras.
\end{proof}

The following proposition shows that $f$ can be arbitrary, and that for a fixed $f \in \kk[t]$ there is a maximal Lie algebra of the form $L(f,g)$.

\begin{prop}\label{prop:L(f)}
    Let $f \in \kk[t]$ be a non-constant polynomial. Then there exists a (unique up to scalar) polynomial $g_f \in \kk[t] \setminus \{0\}$ such that
    \begin{enumerate}
        \item $f'g_f \in \kk[f]$,
        \item If $h \in \kk[t]$ such that $f'h \in \kk[f]$ then $\deg(h) \geq \deg(g_f)$.
    \end{enumerate}
    Furthermore, if $f'h \in \kk[f]$ then $h \in \kk[f]g_f$ and $L(f,h)$ has finite codimension in $L(f,g_f)$. We write $L(f) = L(f,g_f)$.
\end{prop}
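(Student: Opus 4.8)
The plan is to understand the structure of the set $J_f = \{h \in \kk[t] \mid f'h \in \kk[f]\}$ directly. First I would observe that $J_f$ is a $\kk[f]$-submodule of $\kk[t]$: if $f'h_1, f'h_2 \in \kk[f]$ then clearly $f'(h_1 + h_2) \in \kk[f]$, and if $f'h \in \kk[f]$ then $f' \cdot (p(f)h) = p(f)\cdot(f'h) \in \kk[f]$ for any $p \in \kk[t]$. Moreover $J_f \neq 0$: since $f$ is non-constant, $f' \neq 0$ (characteristic $0$), and $f' \cdot (f')^{\deg f - 1}$ is a polynomial in $f$ up to the chain rule — more concretely, taking $h = (f')^{d-1}$ where $d = \deg f$ works because $(f')^d$ is, up to a scalar, the discriminant-type expression; but the cleanest choice is simply to note that $\kk[t]$ is a free $\kk[f]$-module of rank $d = \deg f$ (by the tower $\kk[f] \subseteq \kk[t]$ of degree $d$), so multiplication by $f'$ is a $\kk[f]$-linear endomorphism of a free rank-$d$ module, hence its ``inverse image of $\kk[f]$'' — namely $J_f = (f')^{-1}\kk[f] \cap \kk[t]$ — is a nonzero $\kk[f]$-submodule (it contains $\det$ of the multiplication-by-$f'$ matrix times a unit, via Cramer's rule / the adjugate).

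Next I would invoke the classification of $\kk[f]$-submodules of $\kk[t]$. Since $\kk[t]$ is a finitely generated torsion-free module over the PID $\kk[f]$, it is free of rank $d$; any submodule is free of rank $\leq d$, but since $J_f$ contains a nonzero element $h_0$ and hence the free rank-$d$ submodule $h_0 \kk[t] \supseteq h_0 \kk[f]$... wait — more carefully, $J_f$ has finite codimension in $\kk[t]$ as a $\kk$-vector space iff it has rank $d$; and indeed it does, because $f' \cdot \kk[t]$ has finite codimension in $\kk[t]$ (it is the ideal generated by $f'$, which is nonzero) and $f'\kk[t] \cap \kk[f] \neq 0$ pulls back to a rank-$d$ submodule. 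So $J_f$ is a free $\kk[f]$-module of rank $d$. I then want to extract a single generator $g_f$ controlling the "lowest degree". The key point is that among the elements of $J_f \setminus \{0\}$ there is one of minimal $t$-degree; call it $g_f$ (unique up to scalar once we normalise, say, monic). I claim $J_f = \kk[f]\,g_f$. To see "$\supseteq$" is the module property. For "$\subseteq$": given $h \in J_f$, divide in the tower — write $h = q(f) g_f + r$ with $r \in J_f$ and, via the Euclidean-type argument in the free $\kk[f]$-module using the $\kk[f]$-basis $1, t, \dots, t^{d-1}$, arrange $\deg_t r < \deg_t g_f$; minimality of $\deg_t g_f$ forces $r = 0$. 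This requires checking that the subtraction stays inside $J_f$, which it does since $J_f$ is a $\kk[f]$-module.

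From $J_f = \kk[f] g_f$ everything follows: property (1) is $g_f \in J_f$; property (2) is minimality of $\deg g_f$; the statement "$f'h \in \kk[f] \Rightarrow h \in \kk[f]g_f$" is exactly $J_f = \kk[f]g_f$; and $L(f,h) = \kk[f]h\del \subseteq \kk[f]g_f\del = L(f,g_f)$, with the quotient $L(f,g_f)/L(f,h) = (\kk[f]g_f/\kk[f]h)\del$ finite-dimensional because $h = q(f)g_f$ with $q \neq 0$, so $\kk[f]h = q(f)\kk[f]g_f$ has finite codimension in the rank-one free $\kk[f]$-module $\kk[f]g_f$ (codimension $= \deg_t q = \deg_f q$ times... it is $\dim_\kk \kk[f]/(q(f)) \cdot$, which is finite). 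Uniqueness of $g_f$ up to scalar is immediate from it being the minimal-degree element, given that two such would have the same degree and their difference, if nonzero, would have smaller degree.

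The main obstacle I anticipate is the bookkeeping in the division step: showing that one can reduce an arbitrary $h \in J_f$ modulo $\kk[f]g_f$ to something of $t$-degree smaller than $\deg_t g_f$ while remaining in $J_f$. The honest way is to use that $\kk[t] = \bigoplus_{i=0}^{d-1} \kk[f] t^i$ is free over $\kk[f]$ with the obvious "leading $\kk[f]$-coefficient" notion, and run the division algorithm for this graded-like structure; one must check the degree drop is strict at each stage, using $\deg_t(p(f)) = d\deg p$ so that $t$-degrees of elements of $\kk[f]t^i$ lie in distinct residue classes mod $d$ — this makes the leading-term cancellation behave exactly like ordinary polynomial division. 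Once this is set up, minimality of $g_f$ closes the argument cleanly. Everything else (the module-theoretic facts, non-vanishing of $J_f$, the codimension count) is routine.
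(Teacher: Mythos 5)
Your route is genuinely different from the paper's. The paper works with the ideal $I(f) = \{P \in \kk[t] \mid f' \text{ divides } P(f)\}$ in the ``outer'' polynomial variable: it shows $I(f) \neq 0$ by taking a polynomial $Q$ vanishing on the critical values $f(\alpha)$ (for $f'(\alpha) = 0$) and raising it to a large power, writes $I(f) = (P_f)$ by principality of ideals in $\kk[t]$, and sets $g_f = P_f(f)/f'$; the cyclicity of $J_f = \{h \mid f'h \in \kk[f]\}$ over $\kk[f]$ then falls out for free. You instead work directly with $J_f$ as a $\kk[f]$-submodule of $\kk[t]$ and try to prove it is generated by its element of minimal $t$-degree via a division argument. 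Your nonvanishing argument through the adjugate of the multiplication-by-$f'$ matrix (equivalently, the norm $N_{\kk(t)/\kk(f)}(f') \in f'\kk[t] \cap \kk[f]$) is correct and arguably cleaner than the paper's; your first attempted witness $(f')^{d-1}$ is not in general an element of $J_f$, but you discard it anyway.

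The division step, however, has a genuine gap as written. To reduce $h \in J_f$ by subtracting $\lambda f^k g_f$ you need the $t$-leading terms to be capable of cancelling, i.e.\ you need $\deg_t h \equiv \deg_t g_f \pmod{d}$ where $d = \deg f$. The observation you offer --- that the summands $\kk[f]t^i$ occupy distinct residue classes mod $d$ --- does not give this: a priori $h$ and $g_f$ could have leading terms in different summands, in which case no $\kk[f]$-multiple of $g_f$ reaches the leading term of $h$ and the degree does not drop (compare: $t^2$ cannot be reduced modulo $\kk[f]\cdot t$ when $d \geq 2$). The fix is one line and must be stated: if $0 \neq h \in J_f$ then $f'h = P(f)$ with $P \neq 0$, so $\deg_t h = d\deg P - (d-1) \equiv 1 \pmod{d}$; hence all nonzero elements of $J_f$ have $t$-degree congruent to $1$ mod $d$, the relevant degree differences are multiples of $d$, and the cancellation $h \mapsto h - \lambda f^k g_f$ always strictly drops the degree while staying in $J_f$. (This is the same degree bookkeeping the paper records in Remark \ref{rem:degree of L(f,g)}.) With that inserted your argument closes: minimality gives $J_f = \kk[f]g_f$ and the uniqueness of $g_f$ up to scalar, and the codimension count $\dim_\kk L(f,g_f)/L(f,h) = \deg q$ for $h = q(f)g_f$ is correct.
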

\begin{proof}
    Let
    $$I(f) = \{P \in \kk[t] \mid f' \text{ divides } P(f)\}.$$
    It is immediate that $I(f)$ is an ideal of $\kk[t]$. We claim that $I(f) \neq 0$. Let $Q \in \kk[t]$ be a polynomial that vanishes at $\{f(\alpha) \mid \alpha \in \overline{\kk}, f'(\alpha) = 0\}$. Then $Q(f)$ vanishes at all the roots of $f'$. Therefore, if we take $n \in \NN$ large enough, we have that $P = Q^n \in I(f) \setminus \{0\}$.
    
    The ideal $I(f)$ is nonzero and principal, so there exists some $P_f \in \kk[t] \setminus \{0\}$ such that
    $$I(f) = (P_f).$$
    Let $g_f = P_f(f)/f'$.
    
    Suppose we have $h \in \kk[t]$ such that $f'h \in \kk[f]$. Write
    $$f'h = P(f),$$
    where $P \in \kk[t]$. Then $P \in I(f) = (P_f)$, so $P_f$ divides $P$. Therefore, there exists some $Q \in \kk[t]$ such that $P = P_f Q$. It follows that
    $$h = P(f)/f' = P_f(f)Q(f)/f' = Q(f)g_f.$$
    Hence,
    $$L(f,h) = \kk[f]h\del = \kk[f]Q(f)g_f\del \subseteq \kk[f]g_f\del = L(f).$$
    It is easy to see that $\dim L(f,h)/L(f) = \deg Q < \infty$.
\end{proof}

\begin{rem}\label{rem:degree of L(f,g)}
    Let $f,g,h \in \kk[t]$ such that $f'g = h(f)$. We have
    $$\deg(f) - 1 +\deg(g) = \deg(h)\deg(f),$$
    and therefore
    $$\deg(g) = (\deg(h) - 1)\deg(f) + 1.$$
    Consider $p(f)g\del \in L(f,g)$, where $p \in \kk[t]$. Then
    \begin{align*}
        \deg(p(f)g\del) &= \deg(p(f)g) - 1 = \deg(p)\deg(f) + \deg(g) - 1 \\
        &= (\deg(p) + \deg(h) - 1)\deg(f).
    \end{align*}
    Therefore, the degrees of elements of $L(f,g)$ are multiples of $\deg(f)$.
\end{rem}

Since we already know that $U(\W(h))$ is not noetherian, the following result follows immediately from Lemma \ref{lem:infinite codimension subalgebra}.

\begin{prop}\label{prop:U(L(f,g) not noetherian}
    $U(L(f,g))$ is not noetherian for any $f,g \in \kk[t] \setminus \{0\}$ such that $f'g \in \kk[f]$ and $\deg(f) \geq 1$. \hfill \qed
\end{prop}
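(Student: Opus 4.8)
The plan is to reduce everything to Corollary \ref{cor:finite codimension Witt} via the isomorphism supplied by Lemma \ref{lem:infinite codimension subalgebra}. First I would observe that the hypotheses are consistent and nondegenerate: since $\deg(f) \geq 1$ and $\kk$ has characteristic $0$, we have $f' \neq 0$; combined with $g \neq 0$ this gives $f'g \neq 0$. Writing $f'g = h(f)$ (which is possible precisely because $L(f,g)$ is assumed to be a Lie algebra, so $f'g \in \kk[f]$), it follows that $h(f) \neq 0$, hence $h \neq 0$ in $\kk[t]$.

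Next I would invoke Lemma \ref{lem:infinite codimension subalgebra}, which tells us that $L(f,g) \cong W_{\geq -1}(h)$ as Lie algebras. Universal enveloping algebras are functorial under Lie algebra isomorphisms, so $U(L(f,g)) \cong U(W_{\geq -1}(h))$ as associative $\kk$-algebras. In particular, $U(L(f,g))$ is noetherian if and only if $U(W_{\geq -1}(h))$ is.

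Finally, since $h \in \kk[t] \setminus \{0\}$, Corollary \ref{cor:finite codimension Witt} applies and gives that $U(W_{\geq -1}(h))$ is not noetherian. Transporting this across the isomorphism yields that $U(L(f,g))$ is not noetherian, as claimed.

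There is essentially no obstacle here: the substantive work has already been done in Lemma \ref{lem:infinite codimension subalgebra} (the explicit isomorphism $f^n g \del \mapsto t^n h \del$) and in Corollary \ref{cor:finite codimension Witt} (which itself rests on Proposition \ref{prop:finite codimension} and Theorem \ref{thm:Sue}). The only point requiring a moment's care is checking $h \neq 0$, which is where the hypothesis $\deg(f) \geq 1$ and characteristic $0$ are used; without this the statement would be vacuous or false. Hence the proof is a one-line deduction and I would present it as such, with a brief remark recording the isomorphism $L(f,g) \cong W_{\geq -1}(h)$ for later reference.
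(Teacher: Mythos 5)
Your proposal is correct and follows exactly the route the paper intends: the result is stated as an immediate consequence of Lemma \ref{lem:infinite codimension subalgebra} (giving $L(f,g) \cong \W(h)$) together with Corollary \ref{cor:finite codimension Witt}. Your additional check that $h \neq 0$ (using $\deg(f) \geq 1$ and characteristic $0$) is a sensible point of care that the paper leaves implicit.
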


The example below shows that we can construct subalgebras of infinite codimension in $\W$ which are not contained in any Veronese subalgebra.

\begin{ex}
    If we let $f = t^3 + 3t$ then we claim that $g_f = (t^2 + 1)(t^2 + 4)$. Indeed, $f'g_f = 3(f^2 + 4) \in \kk[f]$, and it is easy to see that $f'h \not\in \kk[f]$ for all $h \in \kk[t]$ of degree 1, so $g_f$ has minimal degree.
    
    Note that $L(f)$ is an infinite-dimensional subalgebra of $W_{\geq -1}$ of infinite codimension. Furthermore, we can easily use Lemma \ref{lem:contained in Veronese?} to show that $L(f)$ is not contained in any Veronese subalgebra.
\end{ex}

\section{Working toward a classification of subalgebras of $\W$}\label{subsec:contained in L(f)}

In this section, we attempt a classification of infinite-dimensional subalgebras of $\W$. Subalgebras of finite codimension in $\W$ have already been classified in \cite{PetukhovSierra}: if $\g$ has finite codimension in $\W$ then there exist $f \in \kk[t] \setminus \{0\}, n \in \NN$ such that
$$\W(f^n) \subseteq \g \subseteq \W(f).$$
Therefore, it remains to classify subalgebras of $\W$ of infinite codimension. We believe that the subalgebras $L(f,g)$ from Subsection \ref{subsec:L(f,g)} are infinite-codimensional analogues of the subalgebras $\W(f)$, in the following sense:
\begin{conj}\label{conj:classification}
    If $\g$ is an infinite-dimensional subalgebra of $\W$ then there exist $f,g \in \kk[t]$ such that $f'g \in \kk[f]$ and
    $$L(f,g) \subseteq \g \subseteq L(f).$$
    In particular, $\g$ has finite codimension in $L(f)$.
\end{conj}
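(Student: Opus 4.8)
The plan is to reduce Conjecture \ref{conj:classification} to Theorem \ref{thm:intro equivalent conditions} and Theorem \ref{thm:intro field of ratios} by analysing the two invariants $R(\g)$ and $F(\g)$. First I would apply Theorem \ref{thm:intro field of ratios} to obtain a polynomial $f \in \kk[t] \setminus \{0\}$ with $F(\g) = \kk(f)$ and $\g \subseteq L(f)$, where here $L(f) = L(f, g_f)$ in the notation of Proposition \ref{prop:L(f)}. This already gives the containment $\g \subseteq L(f)$; the entire difficulty is concentrated in upgrading this to \emph{finite} codimension, equivalently (by Theorem \ref{thm:intro equivalent conditions}) in showing that $R(\g)$ is a field, equivalently that $R(\g) = \kk(f) = F(\g)$ rather than merely generating it. So the goal reduces to: given $w\del, u\del \in \g$ with $u \neq 0$, and given that all such ratios $w/u$ lie in $\kk(f)$ and generate it, show that $R(\g)$ is already closed under the field operations.

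The key step will be to show that $R(\g)$ is closed under addition and multiplication, using the Lie bracket. Given $u\del, w\del, v\del \in \g$ with $u \neq 0$, the bracket $[u\del, w\del] = (uw' - u'w)\del \in \g$, so $(uw' - u'w)/u = w' - (u'/u)w \in R(\g)$; more usefully, writing ratios $r = w/u$ and applying the bracket repeatedly should generate, inside $\g$, elements whose ratios against a fixed $u\del$ realise $\kk[R(\g)]$ and eventually $\kk(R(\g))$. Concretely, I expect that if $r_1 = w_1/u$ and $r_2 = w_2/u$ lie in $R(\g)$, then by combining $[u\del, w_1\del]$, $[u\del, w_2\del]$, $[w_1\del, w_2\del]$ and taking suitable $\kk$-linear combinations (using that $R(\g) \subseteq \kk(f)$ has transcendence degree $1$, so relations are forced) one produces an element $v\del \in \g$ with $v/u = r_1 r_2$ or $v/u = r_1 + r_2$. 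Iterating, $R(\g) \supseteq \kk[R(\g)]$ and then, since $F(\g) = \kk(f)$ is generated by finitely many ratios and $\g \subseteq L(f)$ forces denominators to be controlled by $g_f$, one concludes $R(\g) = \kk(f)$, hence $R(\g)$ is a field. Then Theorem \ref{thm:intro equivalent conditions} immediately gives that $\g$ has finite codimension in $L(f)$, and unwinding the definitions produces $g$ with $f'g \in \kk[f]$ and $L(f,g) \subseteq \g \subseteq L(f)$, which is exactly the statement.

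The main obstacle I anticipate is precisely the claim that $R(\g)$ is closed under the arithmetic operations: a priori $R(\g)$ is only a subset of $\kk(t)$ stable under the somewhat awkward operation $(r_1, r_2) \mapsto r_1' - (u'/u) r_2$-type manipulations coming from brackets, and turning this into genuine closure under $+$ and $\times$ requires exploiting the one-dimensionality of $F(\g)$ to deduce algebraic dependencies. This is the point where I would expect to need the analogue of \cite[Proposition 3.19]{PetukhovSierra} referred to in the discussion of Theorem \ref{thm:intro equivalent conditions}, and indeed it is plausible that this obstacle is genuinely hard — the paper itself only conjectures the statement and proves the weaker Theorems \ref{thm:intro field of ratios} and \ref{thm:intro equivalent conditions}. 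So a fully rigorous proof along these lines may not be attainable with the tools at hand; the honest expectation is that one can prove $\g \subseteq L(f)$ unconditionally (Theorem \ref{thm:intro field of ratios}) and obtain finite codimension only under the extra hypothesis that $R(\g)$ is a field (Theorem \ref{thm:intro equivalent conditions}), which is the furthest the method reaches.
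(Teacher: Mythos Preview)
The statement you are attempting to prove is a \emph{conjecture} in the paper, not a theorem: the paper explicitly does not prove it and presents only the partial results Theorem \ref{thm:subfield generated by R(g)} and Theorem \ref{thm:S is a field?}. Your proposal accurately reconstructs exactly the route the paper takes toward this conjecture --- obtain $f$ with $F(\g)=\kk(f)$ and $\g\subseteq L(f)$ via Theorem \ref{thm:subfield generated by R(g)}, then try to upgrade to finite codimension by showing $R(\g)$ is a field and invoking Theorem \ref{thm:S is a field?} --- and you correctly isolate the obstruction: proving that $R(\g)$ is closed under addition and multiplication from bracket identities alone. This is precisely the point at which the paper stops; your final paragraph is an accurate assessment of the situation.

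The specific mechanism you sketch for closing the gap, namely combining brackets such as $[u\del,w_1\del]$, $[u\del,w_2\del]$, $[w_1\del,w_2\del]$ to manufacture an element of $\g$ whose ratio against $u$ equals $r_1r_2$ or $r_1+r_2$, does not go through as stated. The bracket produces expressions of the form $(uw'-u'w)\del$, so the resulting ratios involve derivatives and the logarithmic derivative $u'/u$; there is no evident $\kk$-linear combination that cancels these to leave a bare product or sum of given ratios. The one-dimensionality of $F(\g)$ forces algebraic relations among the $r_i$, but it does not by itself give you control over which polynomial combinations of ratios are realised in $R(\g)$. The paper's proof of the implication $(2)\Rightarrow(1)$ in Theorem \ref{thm:S is a field?} sidesteps this entirely: once $R(\g)$ is \emph{assumed} to be a field, one simply picks $w\del,u\del\in\g$ with $w/u=f$ and uses the degree argument of Lemma \ref{lem:gcd}. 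No bracket-arithmetic of the kind you propose is attempted, and indeed none is known to work --- which is why the conjecture remains open.
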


Note that if Conjecture \ref{conj:classification} holds, then so does Conjecture \ref{conj:subalgebras of W}.

In fact, in order to prove Conjecture \ref{conj:classification}, it suffices to prove that if $\g$ is an infinite-dimensional subalgebra of $\W$ then there exists $f \in \kk[t]$ such that $\g$ has finite codimension in $L(f)$. This can be seen by noting that if $\g$ has finite codimension in $L(f)$ then $\g$ is isomorphic to a subalgebra of finite codimension in $\W$, by Lemma \ref{lem:infinite codimension subalgebra}. Thus, we can apply Petukhov and Sierra's result to deduce that $\g$ contains a subalgebra isomorphic to $\W(h)$, for some $h \in \kk[t] \setminus \{0\}$. This subalgebra is of the form $L(f,g)$, for some $g \in \kk[t] \setminus \{0\}$.

In this section, we show that the subalgebra $\g$ must be contained in some $L(f)$ in a canonical way, but we have not been able to show that $\g$ has finite codimension in $L(f)$. In fact, we do not know of any infinite-dimensional subalgebra of $W_{\geq -1}$ which does not have finite codimension in some $L(f)$.

Let $\g$ be a subalgebra of $\W$ and suppose $\g$ has finite codimension in $L(f)$ for some $f \in \kk[t]$. If $\g = L(f)$ then by considering ratios $\frac{w}{u}$, where $w\del, u\del \in \g$, we can generate any element of $\kk(f)$. This suggests that looking at such ratios should give us a way of extracting the polynomial $f$ from the subalgebra $\g$. To this end, we introduce the following invariants of subalgebras of $\W$:

\begin{dfn}
    If $\g$ is a subalgebra of $\W$ then we define the \emph{set of ratios} $R(\g)$ of $\g$ by
    $$R(\g) = \left\{\frac{w}{u} \in \kk(t) \mid w\del, u\del \in \g, u \neq 0\right\}.$$
    We define the \emph{field of ratios} $F(\g)$ of $\g$ as the subfield of $\kk(t)$ generated by $R(\g)$.
\end{dfn}

We start by considering the field of ratios: the following result shows that $F(\g) = \kk(f)$ for some $f \in \kk[t]$ and, furthermore, $\g \subseteq L(f)$.

\begin{thm}\label{thm:subfield generated by R(g)}
    Let $\g$ be an infinite-dimensional Lie subalgebra of $\W$ and let $K \subseteq \kk(t)$ be a field containing $R(\g)$. Then there exists some $f \in \kk[t] \setminus \{0\}$ such that:
    \begin{enumerate}
        \item The field $K$ is generated by $f$.
        \item The subalgebra $\g$ is contained in $L(f)$.
    \end{enumerate}
\end{thm}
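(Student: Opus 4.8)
The plan is to use Lüroth's theorem to obtain a generator of $K$ as a rational function, then argue in two stages: first that this generator can be taken to be a polynomial $f$, and then that $\g \subseteq L(f)$. For the first stage, since $K$ is a subfield of $\kk(t)$ containing $K \supseteq R(\g)$, which by hypothesis contains the infinite set of ratios of elements of the infinite-dimensional algebra $\g$, it is in particular not equal to $\kk$ (one must check here that $R(\g)$ is not contained in $\kk$ — this follows because $\g$ is infinite-dimensional, so it contains elements $w\del, u\del$ with $\deg w \neq \deg u$, giving a non-constant ratio). Lüroth's theorem then yields $\tilde f \in \kk(t)$ with $K = \kk(\tilde f)$, and we may normalise so that $\tilde f = p/q$ with $p, q \in \kk[t]$ coprime. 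To replace $\tilde f$ by a polynomial, I would argue that the place at infinity of $\kk(t)$ must be totally ramified in the extension $\kk(\tilde f) \subseteq \kk(t)$ in a way compatible with $\tilde f$ being (up to a Möbius transformation over $K$) a polynomial: concretely, among all generators of $K$, one can pick $f$ such that $t$ is integral over $\kk[f]$, equivalently $\kk[t]$ is a finitely generated $\kk[f]$-module. The cleanest route is: pick any $w\del \in \g$ with $w \neq 0$ of minimal degree; then for every $u\del \in \g$ the ratio $u/w \in K = \kk(\tilde f)$, so $u \in w \cdot \kk(\tilde f) \cap \kk[t]$; elaborating which denominators can appear forces $\tilde f$ to have a representative $f$ which is a polynomial, and in fact one sees $\kk[f] \subseteq \kk(\tilde f)$ contains all the ratios $u/w$ that are needed.

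For the second stage, once $K = \kk(f)$ with $f \in \kk[t] \setminus \{0\}$, I must show every element $u\del \in \g$ lies in $L(f) = \kk[f] g_f \del$, where $g_f$ is the minimal-degree polynomial with $f' g_f \in \kk[f]$ from Proposition \ref{prop:L(f)}. Fix $w\del \in \g$ of minimal degree (equivalently, with $w$ of minimal degree). For any $u\del \in \g$, the ratio $u/w \in R(\g) \subseteq K = \kk(f)$. The key subsidiary claim is that $w$ itself is a scalar multiple of $g_f$, or at least that $w \in \kk[f] g_f$: this should follow by exploiting the Lie bracket, since $[w\del, u\del] = (wu' - w'u)\del \in \g$, so $(wu'-w'u)/w = u' - (w'/w)u \in K$ as well, and combining this with $u/w \in K$ and $w'/w$-type computations pins down the "denominator" that $w$ is allowed to have — it must be built from $f$ in exactly the way $g_f$ prescribes. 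Granting $w \in \kk[f]g_f$, from $u = (u/w) \cdot w$ with $u/w \in \kk(f)$ and $u \in \kk[t]$, one deduces $u/w \in \kk[f]$ (the only denominators allowed are those dividing powers that are already cancelled), hence $u \in \kk[f] w \subseteq \kk[f]g_f\del$'s underlying space, i.e. $u\del \in L(f)$.

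The main obstacle I anticipate is the passage from a rational Lüroth generator to a \emph{polynomial} generator $f$, together with simultaneously controlling that $\g \subseteq L(f)$ for \emph{that} $f$ rather than merely for some polynomial with the same field of ratios. These two requirements are coupled: a change of generator $f \mapsto \frac{af+b}{cf+d}$ preserves $K$ but can destroy polynomiality and changes $L(f)$, so one has to make the right choice. I expect the correct mechanism is to track the valuation at infinity: if $w\del \in \g$ has minimal degree and $u\del \in \g$ is arbitrary, then $\deg u \geq \deg w$ and, crucially, every element of $\g$ has degree $\equiv \deg w \pmod{\deg f}$ by an argument parallel to Remark \ref{rem:degree of L(f,g)}; this congruence, applied to the infinitely many degrees occurring in $\g$, forces $\deg f$ to divide all relevant differences and ultimately forces the generator to be taken polynomial with $g_f = \text{(scalar)} \cdot w$. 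Making this degree/valuation bookkeeping rigorous — in particular handling the pole of $\tilde f$ and showing no genuine denominator survives — is where the real work lies; the rest is the kind of routine manipulation of ratios and brackets already illustrated in Subsection \ref{subsec:L(f,g)}.
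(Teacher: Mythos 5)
Your proposal follows the paper's broad outline (L\"uroth's theorem, then polynomiality of the generator, then containment in $L(f)$), but two of its central steps do not work as stated, so there is a genuine gap rather than just missing detail. The first problem is the claim that one can take $w\del \in \g$ of minimal degree and deduce, from $u/w \in \kk(f)$ and $u \in \kk[t]$, that $u/w \in \kk[f]$. This is false: a subalgebra $\g$ generated by $fg\del$ and $(f^2+1)g\del$ (with $f'g \in \kk[f]$) has minimal-degree element $w\del = fg\del$ and contains $u\del = (f^2+1)g\del$, yet $u/w = (f^2+1)/f \notin \kk[f]$. The correct statement, and what the paper actually proves, is that every $u$ with $u\del \in \g$ equals $q(f)g$ for a single fixed polynomial $g$ which admits no factorisation $g = r(f)g'$ with $r$ non-constant (the condition $g \notin \kk[f]_\#$). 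Extracting this $g$ is the real work: it requires Proposition \ref{prop:f polynomial times anti f polynomial} (which uses the pole of $f$ at infinity to make the factorisation $w = p(f)g$ terminate) together with the fact that coprimality of $p,q \in \kk[t]$ survives the substitution $t \mapsto f$ after clearing denominators (Proposition \ref{prop:coprime polynomials with hats} and Corollary \ref{cor:f polynomials divide as usual}). None of this is supplied by your minimal-degree normalisation, and your $w$ need not equal, or even lie in, $\kk[f]g_f$ in any way you have justified.

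The second problem is circularity in the degree bookkeeping. You invoke "every element of $\g$ has degree $\equiv \deg w \pmod{\deg f}$ by an argument parallel to Remark \ref{rem:degree of L(f,g)}," but that remark is a consequence of already knowing $\g \subseteq \kk[f]g\del$ with $f'g \in \kk[f]$ --- precisely what is to be proven. In the paper the Lie bracket enters only at the very end and for a different purpose: once $\g \subseteq \kk[f]g\del$ is established, one computes $[p(f)g\del, q(f)g\del] = (pq'-p'q)(f)\,f'g^2\del \in \g$ and uses $R(\g) \subseteq \kk(f)$ to get $f'g \in \kk(f) \cap \kk[t] = \kk[f]$, so that $L(f,g)$ is a Lie algebra and $\g \subseteq L(f,g) \subseteq L(f)$ by Proposition \ref{prop:L(f)}. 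Polynomiality of $f$ is likewise obtained by a concrete argument you only gesture at: after a M\"obius transformation one assumes $f = a/b$ has a pole at infinity, and then an element $u = q(f)g$ of $\g$ with $\deg q > \deg g$ (which exists since $\g$ is infinite-dimensional) forces $b^{\deg q}$ to divide $g$, hence $b$ is constant. Your instinct about "which denominators can appear" points in the right direction, but as written the proposal has no mechanism to carry it out.
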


We leave the proof of Theorem \ref{thm:subfield generated by R(g)} for later, and instead focus on the set of ratios $R(\g)$. Certainly, $R(\g)$ is closed under scalar multiplication and taking multiplicative inverses, but it is not clear if it satisfies any other properties. As suggested by Theorem \ref{thm:subfield generated by R(g)}, it will be of particular interest to consider situations where $R(\g)$ is a field. In fact, we have:

\begin{thm}\label{thm:S is a field?}
    Let $\g$ be an infinite-dimensional Lie subalgebra of $\W$. The following are equivalent:
    \begin{enumerate}
        \item[(1)] There exists a nonzero $f \in \kk[t]$ such that $\g$ has finite codimension in $L(f)$.
        \item[(2)] The set $R(\g)$ is a field.
        \item[(3)] There exists a nonzero $f \in \kk[t]$ such that $R(\g) = \kk(f)$.
    \end{enumerate}
\end{thm}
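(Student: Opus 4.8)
The strategy is to establish the cycle $(1)\Rightarrow(2)\Rightarrow(3)\Rightarrow(1)$; the substance lies entirely in $(3)\Rightarrow(1)$, the other two implications being quick consequences of Theorem~\ref{thm:subfield generated by R(g)} and of the classification of finite-codimension subalgebras of $\W$ in \cite{PetukhovSierra}.

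For $(1)\Rightarrow(2)$, suppose $\g$ has finite codimension in $L(f)=\kk[f]g_f\del$. By Lemma~\ref{lem:infinite codimension subalgebra} there is an isomorphism $L(f)\cong\W(h)$ with $f'g_f=h(f)$, under which $\g$ corresponds to a subalgebra $\g'$ of finite codimension in $\W(h)$, hence in $\W$. By \cite{PetukhovSierra}, $\g'$ contains $\W(p^m)$ for some $p\in\kk[t]\setminus\{0\}$ and $m\in\NN$. One checks directly that $R(\W(p^m))=\kk(t)$, and that the isomorphism $\W(h)\cong L(f)$ carries a ratio $w/u$ of elements of $\W(h)$ to the ratio of the corresponding elements of $L(f)$ under the field isomorphism $\kk(t)\cong\kk(f)$, $t\mapsto f$. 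Hence $R(\g)$ contains the image of $R(\W(p^m))=\kk(t)$, namely $\kk(f)$, while $R(\g)\subseteq R(L(f))=\kk(f)$; so $R(\g)=\kk(f)$, which is a field.

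For $(2)\Rightarrow(3)$, apply Theorem~\ref{thm:subfield generated by R(g)} with $K=R(\g)$ (legitimate since $R(\g)$ is a field by hypothesis and trivially contains itself): it produces $f\in\kk[t]\setminus\{0\}$ with $R(\g)=K=\kk(f)$. For $(3)\Rightarrow(1)$, suppose $R(\g)=\kk(f)$; applying Theorem~\ref{thm:subfield generated by R(g)} with $K=\kk(f)$ gives $\tilde f\in\kk[t]\setminus\{0\}$ with $\kk(\tilde f)=\kk(f)$ and $\g\subseteq L(\tilde f)$. Since $\tilde f$ is a polynomial lying in $\kk(f)$ and $\kk[t]$ is integral over $\kk[f]$, we get $\tilde f\in\kk[f]$, and symmetrically $f\in\kk[\tilde f]$, forcing $\tilde f=af+b$ with $a\in\kk^*$; consequently $\kk[\tilde f]=\kk[f]$, $g_{\tilde f}=g_f$ up to scalar, and $L(\tilde f)=L(f)$. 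Thus $\g\subseteq L(f)$. Transporting along $L(f)\cong\W(h)$ (Lemma~\ref{lem:infinite codimension subalgebra}), the subalgebra $\g$ corresponds to an infinite-dimensional subalgebra $\g'\subseteq\W(h)\subseteq\W$ with $R(\g')=\kk(t)$. It then remains to prove that \emph{every infinite-dimensional subalgebra of $\W$ whose set of ratios equals $\kk(t)$ has finite codimension in $\W$}, which is the $\W$-analogue of \cite[Proposition~3.19]{PetukhovSierra}. Granting this, $\g'$ has finite codimension in $\W$, hence in $\W(h)$, and therefore $\g$ has finite codimension in $L(f)$, proving (1).

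The main obstacle is this last displayed statement. The argument of \cite[Proposition~3.19]{PetukhovSierra} is carried out for the Witt algebra $W=\Der(\kk[t,\inv{t}])$ and must be adapted to $\W=\Der(\kk[t])$: concretely, one uses that every element of $\kk(t)$ is realised as a ratio of elements of $\g'$ to produce, by taking iterated Lie brackets, elements of $\g'$ of every sufficiently large degree with controlled leading terms, which is what pins down $\dim\W/\g'<\infty$; some care is needed with the passage between $\kk[t,\inv{t}]$ and $\kk[t]$, i.e.\ with behaviour at the point $t=0$. By contrast, the remaining ingredients used above --- the computation $R(\W(p^m))=\kk(t)$, the compatibility of the isomorphism of Lemma~\ref{lem:infinite codimension subalgebra} with ratios, and the L\"uroth-type normalisation $L(\tilde f)=L(f)$ --- are routine.
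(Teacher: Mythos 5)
Your implications $(1)\Rightarrow(2)$ and $(2)\Rightarrow(3)$ are fine and essentially match the paper (which proves $(1)\Rightarrow(3)$ by the same route through Lemma~\ref{lem:infinite codimension subalgebra} and the finite-codimension classification of \cite{PetukhovSierra}, and gets $(3)\Rightarrow(2)$ for free). The problem is $(3)\Rightarrow(1)$: you reduce everything to the assertion that an infinite-dimensional subalgebra $\g'\subseteq\W$ with $R(\g')=\kk(t)$ has finite codimension in $\W$, label it ``the $\W$-analogue of [PS, Prop.~3.19]'', and leave it unproved. That is a genuine gap, and the label is misleading: as used in this paper, \cite[Proposition~3.19]{PetukhovSierra} is a structure theorem for subalgebras \emph{already known} to have finite codimension (it sandwiches them as $\W(f^n)\subseteq\g\subseteq\W(f)$); what you need is a criterion \emph{for} finite codimension in terms of ratios, which is a different statement and is exactly the mathematical content of the implication you are trying to prove. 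Your one-sentence sketch (``iterated Lie brackets produce elements of every sufficiently large degree'') names the right phenomenon but does not identify the input that makes it run.

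The paper closes precisely this gap with Lemma~\ref{lem:gcd}, and does so directly on $\g$ without transporting to $\W(h)$: since $R(\g)=\kk(f)$, the generator $f$ itself is realised as a ratio, so there are $w\del,u\del\in\g$ with $w/u=f$; then $\deg(w\del)-\deg(u\del)=\deg(f)$ while both degrees are multiples of $\deg(f)$ because $w\del,u\del\in L(f)$ (Remark~\ref{rem:degree of L(f,g)}), whence $\gcd(\deg(w\del),\deg(u\del))=\deg(f)$. Lemma~\ref{lem:gcd} then produces elements of $\g$ of degree $k\deg(f)$ for all $k\gg 0$, and since $L(f)$ has a basis with exactly one element in each such degree, this forces $\dim L(f)/\g<\infty$. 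If you prefer your transported formulation, the same lemma applies to $\g'$: realising $t\in R(\g')$ gives two elements of $\g'$ whose degrees differ by $1$, hence are coprime, and Lemma~\ref{lem:gcd} again yields elements of every sufficiently large degree. Either way the missing step is available from results already in the paper, but as written your argument does not establish it; also note that your normalisation $L(\tilde f)=L(f)$, while correct, is unnecessary --- one can simply work with the polynomial produced by Theorem~\ref{thm:subfield generated by R(g)} throughout.
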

\begin{proof}
    The implication $(3) \Rightarrow (2)$ is clear. We will show that $(1) \Rightarrow (3)$ and $(2) \Rightarrow (1)$.
    
    $(1) \Rightarrow (3)$: Suppose there exists $f \in \kk[t]$ such that $\g$ has finite codimension in $L(f)$. Then $R(\g) \subseteq \kk(f)$. We will show that $R(\g) = \kk(f)$.
    
    Let $g = g_f$ in the notation of Proposition \ref{prop:L(f)}, so $L(f) = L(f,g)$, and let $h \in \kk[t]$ such that $f'g = h(f)$. Consider the isomorphism
    \begin{align*}
        \varphi \colon L(f) &\to \W(h) \\
        p(f)g\del &\mapsto ph\del
    \end{align*}
    from Lemma \ref{lem:infinite codimension subalgebra}, where $p \in \kk[t]$. Let $\mf{k} = \varphi(\g)$. Since $\g$ has finite codimension in $L(f)$ and $\W(h)$ has finite codimension in $\W$, it follows that $\mf{k}$ has finite codimension in $\W$. By an analogous result to \cite[Proposition 3.19]{PetukhovSierra} (which considers subalgebras of finite codimension in $W = \Der(\kk[t,\inv{t}])$), there exists $q \in \kk[t]$ such that
    $$\W(qh) \subseteq \mf{k} \subseteq \W(h).$$
    Note that $\inv{\varphi}(\W(qh)) = L(f,q(f)g)$. Hence,
    $$L(f,q(f)g) \subseteq \g.$$
    It is immediate that $R(\g) = \kk(f)$.
    
    $(2) \Rightarrow (1)$: Now suppose $R(\g)$ is a field, so that $R(\g) = F(\g)$. By Theorem \ref{thm:subfield generated by R(g)} there exists $f \in \kk[t] \setminus \{0\}$ such that $R(\g) = \kk(f)$ and $\g \subseteq L(f)$.
    
    We must show that $\g$ has finite codimension in $L(f)$. We have $f \in R(\g)$, so there exist $w\del, u\del \in \g$ such that $f = \frac{w}{u}$. Therefore, $\deg(w) - \deg(u) = \deg(f)$. Since $w\del, u\del \in \g \subseteq L(f)$, we also know that $\deg(f)$ divides $\deg(w\del)$ and $\deg(u\del)$, by Remark \ref{rem:degree of L(f,g)}. Hence, $\gcd(\deg(w\del),\deg(u\del)) = \deg(f)$. But then it follows by Lemma \ref{lem:gcd} that $\g$ contains elements of degree $k\deg(f)$ for all $k \gg 0$. Since all elements of $L(f)$ have degree $k\deg(f)$ for some $k \in \NN$, we conclude that $\g$ has finite codimension in $L(f)$, as required.
\end{proof}

\begin{rem}
    Let $\g$ be an infinite-dimensional Lie subalgebra of $\W$ and let $f \in \kk[t]$ such that $F(\g) = \kk(f)$ (which exists by Theorem \ref{thm:subfield generated by R(g)}). What was shown in the final part of the proof of Theorem \ref{thm:S is a field?} is the following surprising statement: if there exist $w\del, u\del \in \g$ such that $\frac{w}{u} = f$ then $R(\g)$ is a field.
\end{rem}

As an immediate consequence of Proposition \ref{prop:finite codimension}, Proposition \ref{prop:U(L(f,g) not noetherian}, and Theorem \ref{thm:S is a field?}, we get:
\begin{cor}\label{cor:R(g) is a field}
    Let $\g$ be an infinite-dimensional Lie subalgebra of $\W$. If $R(\g)$ is a field then $U(\g)$ is not noetherian. \qed
\end{cor}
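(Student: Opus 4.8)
The plan is to obtain the statement by simply chaining together the three results quoted before it; the substantive content has already been established, so this is really a one-line argument once those are in hand, and the only point that genuinely needs checking is that the polynomial produced below is non-constant.

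Concretely, I would argue as follows. Assume $R(\g)$ is a field. First apply the implication $(2)\Rightarrow(1)$ of Theorem \ref{thm:S is a field?}: it produces a nonzero $f\in\kk[t]$ such that $\g$ has finite codimension in $L(f)$. Next observe that $\deg f\geq 1$: since $\g$ is infinite-dimensional it contains elements $u_1\del,u_2\del$ with $u_1,u_2$ linearly independent over $\kk$, so $u_1/u_2\in R(\g)$ is non-constant and hence $F(\g)=R(\g)=\kk(f)\neq\kk$ (equivalently, $L(f)$ is only defined for non-constant $f$ by Proposition \ref{prop:L(f)}, and for constant $f$ one would have $L(f)=\kk g_f\del$, which is finite-dimensional and cannot contain $\g$ with finite codimension). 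With $\deg f\geq 1$ in hand, take $g=g_f$, so that $f'g_f\in\kk[f]$ by Proposition \ref{prop:L(f)}; then Proposition \ref{prop:U(L(f,g) not noetherian} gives that $U(L(f))=U(L(f,g_f))$ is not noetherian. Finally, since $\g$ has finite codimension in $L(f)$, Proposition \ref{prop:finite codimension} tells us that $U(\g)$ is noetherian if and only if $U(L(f))$ is; as the latter fails, $U(\g)$ is not noetherian, as desired.

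I do not expect a real obstacle at the level of this corollary: the weight is carried elsewhere. The non-noetherianity ultimately comes from Sierra and Walton's theorem, propagated through Corollary \ref{cor:finite codimension Witt} and Lemma \ref{lem:infinite codimension subalgebra} to yield Proposition \ref{prop:U(L(f,g) not noetherian}; the transfer from $\g$ to the overalgebra $L(f)$ is exactly Proposition \ref{prop:finite codimension}; and the structural fact that $R(\g)$ being a field forces $\g$ to sit with finite codimension inside some $L(f)$ is Theorem \ref{thm:S is a field?}, whose proof in turn relies on the deferred Theorem \ref{thm:subfield generated by R(g)}. So the \emph{only} genuine verification left for the corollary itself is excluding the degenerate case $\deg f=0$, which the infinite-dimensionality of $\g$ rules out as above.
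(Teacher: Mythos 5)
Your proposal is correct and is exactly the argument the paper intends: the corollary is stated as an immediate consequence of Theorem \ref{thm:S is a field?}, Proposition \ref{prop:U(L(f,g) not noetherian}, and Proposition \ref{prop:finite codimension}, chained precisely as you do. Your extra check that $\deg f\geq 1$ (so that Proposition \ref{prop:U(L(f,g) not noetherian} applies) is a sensible bit of diligence the paper leaves implicit.
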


If we do not assume that $R(\g)$ is a field, we still know that $\g \subseteq L(f)$ for some $f \in \kk[t]$, by Theorem \ref{thm:subfield generated by R(g)}. What is not clear is whether $\g$ has finite codimension in $L(f)$, or whether $U(\g)$ can be noetherian in this case.

The rest of the subsection is devoted to proving Theorem \ref{thm:subfield generated by R(g)}. Before we can prove it, we require some technical results about rational functions.

\begin{ntt}
    Let $f \in \kk(t)$ and $g \in \kk[t]$. We write $f \# g$ if there exists $h \in \kk[t]$ such that $g = fh$.
\end{ntt}

The following result shows that the relation $f \# g$ is nothing new, but it still gives a concise way of encoding the relationship between $f$ and $g$.

\begin{lem}\label{lem:numerator divides}
    Let $f = \frac{a}{b} \in \kk(t)$, where $a,b \in \kk[t]$ are coprime, and let $g \in \kk[t]$. Then $f \# g$ if and only if $a$ divides $g$. \qed
\end{lem}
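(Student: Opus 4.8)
The plan is to unwind both directions of the biconditional directly from the definition of the relation $\#$. Recall that $f \# g$ means $g = fh$ for some $h \in \kk[t]$, i.e.\ $g = \frac{a}{b}h$, which rearranges to $bg = ah$ as an identity in $\kk[t]$. So the whole statement reduces to the elementary fact that, for coprime $a,b$, we have $a \mid g$ if and only if there exists $h \in \kk[t]$ with $bg = ah$.

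For the forward direction, suppose $f \# g$, so $bg = ah$ for some $h \in \kk[t]$. Then $a$ divides $bg$; since $\gcd(a,b) = 1$, Euclid's lemma (equivalently, unique factorisation in $\kk[t]$) gives $a \mid g$.

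For the reverse direction, suppose $a \mid g$, say $g = a q$ with $q \in \kk[t]$. Then $fq = \frac{a}{b} \cdot a q$\dots{} hmm, that is not quite $g$; instead set $h = bq \in \kk[t]$, so that $fh = \frac{a}{b} \cdot b q = a q = g$, and therefore $f \# g$. The only subtlety worth a remark is that this uses $f \neq 0$ implicitly (so that $a \neq 0$ and the factorisation $g = aq$ makes sense); if $a = 0$ then $f = 0$ and $f \# g$ holds only for $g = 0$, matching ``$a$ divides $g$'' with the usual convention that $0 \mid g$ iff $g = 0$. There is no real obstacle here — the lemma is purely a restatement of divisibility via the $\#$ notation, and the only thing to be careful about is keeping track of which polynomial plays the role of $h$ in each direction.
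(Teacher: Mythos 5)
Your proof is correct; the paper in fact omits a proof entirely (the lemma is stated with an immediate \qed as an elementary fact), and your argument — rewriting $f\#g$ as $bg = ah$ and applying Euclid's lemma for the forward direction, and taking $h = bq$ for the reverse — is precisely the intended routine verification. No issues.
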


In order to prove Theorem \ref{thm:subfield generated by R(g)}, we will want to consider polynomials in $f$, where $f$ is a rational function in $t$. The following proposition shows that if we have two coprime polynomials $p,q \in \kk[t]$, then $p(f)$ and $q(f)$ are still coprime after clearing the denominators.

\begin{prop}\label{prop:coprime polynomials with hats}
    Let $f = \frac{a}{b} \in \kk(t)$, where $a,b \in \kk[t]$ are coprime, and let $p,q \in \kk[t]$ be coprime. Let $n = \max\{\deg(p),\deg(q)\}$. Then
    $$b^np(f), b^nq(f) \in \kk[t]$$
    are coprime polynomials. In particular, let $m = \deg(p)$. Then $b^mp(f)$ and $b^m$ are coprime.
\end{prop}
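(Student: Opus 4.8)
The plan is to prove that the two polynomials have no common irreducible factor; coprimality then follows. First I would record the explicit formulas. Writing $p(x) = \sum_{i=0}^{\deg p} p_i x^i$ and $q(x) = \sum_{j=0}^{\deg q} q_j x^j$, and using $f = a/b$, one has
$$b^n p(f) = \sum_{i=0}^{\deg p} p_i\, a^i b^{n-i}, \qquad b^n q(f) = \sum_{j=0}^{\deg q} q_j\, a^j b^{n-j},$$
which genuinely lie in $\kk[t]$ precisely because $n \geq \deg p$ and $n \geq \deg q$. I would also note at the outset that, since $n = \max\{\deg p, \deg q\}$, at least one of the two degrees equals $n$; say $\deg p = n$, the other case being symmetric. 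Now let $\pi \in \kk[t]$ be irreducible with $\pi \mid b^n p(f)$ and $\pi \mid b^n q(f)$; the goal is a contradiction, split into two cases according to whether $\pi$ divides $b$.

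Case $\pi \mid b$: since $a$ and $b$ are coprime, $\pi \nmid a$. Reducing $b^n p(f)$ modulo $\pi$, every term with $i < n$ carries the factor $b^{n-i}$ and hence vanishes, so $b^n p(f) \equiv p_n a^n \pmod{\pi}$. As $p_n \in \kk^*$ and $\pi \nmid a^n$, this is nonzero modulo $\pi$, contradicting $\pi \mid b^n p(f)$. (If instead only $\deg q = n$, run the same computation on $b^n q(f)$.) Thus no common irreducible factor divides $b$.

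Case $\pi \nmid b$: work in the residue field $\FF = \kk[t]/(\pi)$, a finite field extension of $\kk$. There $\bar b$ is invertible, so $\bar f := \bar a\, \bar b^{-1} \in \FF$ is defined, and from the formulas above $\overline{b^n p(f)} = \bar b^{\,n}\, p(\bar f)$ and $\overline{b^n q(f)} = \bar b^{\,n}\, q(\bar f)$. Hence $\pi \mid b^n p(f)$ forces $p(\bar f) = 0$ and $\pi \mid b^n q(f)$ forces $q(\bar f) = 0$, so $\bar f$ is a common root of $p$ and $q$ in $\FF$. But coprimality of $p,q$ in $\kk[t]$ gives $u,v \in \kk[t]$ with $up + vq = 1$; applying the $\kk$-algebra homomorphism $\kk[x] \to \FF$, $x \mapsto \bar f$, yields $0 = 1$ in $\FF$, a contradiction. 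Combining both cases, $\gcd(b^n p(f), b^n q(f)) = 1$. The final sentence is then the special case $q = 1$, $m = \deg p$: here $n = m$, the polynomials $p$ and $1$ are coprime, and $b^m q(f) = b^m$, so $b^m p(f)$ and $b^m$ are coprime.

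The argument is essentially routine; the only point that needs care is the bookkeeping around $n = \max\{\deg p, \deg q\}$, since it is exactly this choice that guarantees one of the two polynomials reduces modulo any irreducible divisor of $b$ to a nonzero scalar multiple of $a^n$, which is what rules out common factors coming from $b$. (The case $\pi \mid b$ could alternatively be phrased via Lemma \ref{lem:numerator divides}, but reducing modulo $\pi$ handles both cases uniformly.)
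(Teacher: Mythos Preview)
Your proof is correct and follows essentially the same two-case structure as the paper's: a putative common factor either divides $b$ (contradicting coprimality of $a$ and $b$) or does not (contradicting coprimality of $p$ and $q$). The only difference is cosmetic: the paper passes to $\overline{\kk}$, factors $p$ and $q$ into linear factors $\prod(t-\lambda_i)$ and $\prod(t-\mu_j)$, and argues with a common root $\gamma \in \overline{\kk}$, whereas you stay over $\kk$, take an irreducible $\pi$, and work in the residue field $\kk[t]/(\pi)$ with B\'ezout---these are interchangeable implementations of the same idea.
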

\begin{proof}
    Without loss of generality, we may assume that $\deg(p) \geq \deg(q)$, so $n = \deg(p) \geq 1$. Let $\widehat{p} = b^np(f)$ and $\widehat{q} = b^nq(f)$. Letting $m = \deg(q)$, we have
    $$p = \prod_{i = 1}^n(t - \lambda_i), \quad q = \prod_{i = 1}^m(t - \mu_i),$$
    for some $\lambda_i, \mu_i \in \overline{\kk}$, and therefore
    $$\widehat{p} = b^np(f) = \prod_{i = 1}^n(a - \lambda_ib), \quad \widehat{q} = b^nq(f) = b^{n-m}\prod_{i = 1}^m(a - \mu_ib) \in \kk[t].$$
    We must show that $\widehat{p}$ and $\widehat{q}$ are coprime. 
    
    Suppose, for a contradiction, that $\gamma \in \overline{\kk}$ is a root of both $\widehat{p}$ and $\widehat{q}$.
    
    \noindent \textbf{Case 1:} $\gamma$ is a root of $b$.
    
    Since $\gamma$ is a root of $\widehat{p}$,
    $$a(\gamma) - \lambda_ib(\gamma) = 0$$
    for some $i$. But since $b(\gamma) = 0$, it follows that $a(\gamma) = 0$, which contradicts $a$ and $b$ being coprime.
    
    \noindent \textbf{Case 2:} $\gamma$ is not a root of $b$.
    
    As in Case 1, we must have
    \beq\label{eq:lambda}
        a(\gamma) - \lambda_ib(\gamma) = 0
    \eeq
    for some $i$. Furthermore, since $\gamma$ is a root of $\widehat{q}$ and is not a root of $b$,
    \beq\label{eq:mu}
        a(\gamma) - \mu_jb(\gamma) = 0
    \eeq
    for some $j$. Subtracting \eqref{eq:lambda} from \eqref{eq:mu},
    $$(\lambda_i - \mu_j)b(\gamma) = 0.$$
    But $b(\gamma) \neq 0$, so $\lambda_i = \mu_j$, which contradicts $p$ and $q$ being coprime.
    
    Therefore, $\widehat{p} = b^np(f)$ and $\widehat{q} = b^nq(f)$ are coprime polynomials, as required.
\end{proof}

The next result now follows immediately from Lemma \ref{lem:numerator divides} and Proposition \ref{prop:coprime polynomials with hats}.

\begin{cor}\label{cor:hat polynomial divides}
    Let $f = \frac{a}{b} \in \kk(t)$, where $a,b \in \kk[t]$ are coprime, and let $p,g \in \kk[t]$. Then $p(f) \# g$ if and only if $b^np(f) \in \kk[t]$ divides $g$, where $n = \deg(p)$.
\end{cor}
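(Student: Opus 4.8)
The plan is to deduce the corollary directly from Lemma \ref{lem:numerator divides} by writing $p(f)$ as a ratio of coprime polynomials. First I would set $n = \deg(p)$ and observe that
$$p(f) = \frac{b^n p(f)}{b^n},$$
where the numerator $b^n p(f) = b^n p(a/b)$ is a genuine element of $\kk[t]$: expanding $p(f) = \sum_{i=0}^n c_i a^i / b^i$ and multiplying through by $b^n$ clears all denominators.

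Next I would invoke the final ("in particular") clause of Proposition \ref{prop:coprime polynomials with hats}, which says precisely that $b^n p(f)$ and $b^n$ are coprime polynomials (this is the case $q = 1$ of the proposition, with $m = n = \deg p$). Hence $b^n p(f) / b^n$ is a presentation of the rational function $p(f)$ in lowest terms.

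Finally I would apply Lemma \ref{lem:numerator divides} to the rational function $\varphi = p(f)$, whose coprime numerator and denominator are $b^n p(f)$ and $b^n$: the lemma states that $\varphi \# g$ if and only if the numerator divides $g$, which gives exactly that $p(f) \# g$ if and only if $b^n p(f)$ divides $g$. There is essentially no obstacle here — all the substantive work is contained in Proposition \ref{prop:coprime polynomials with hats}, and this corollary is just the bookkeeping of putting $p(f)$ over the common denominator $b^n$ and quoting Lemma \ref{lem:numerator divides}. The only points that warrant a word of care are the degenerate cases: when $p$ is a nonzero constant, $n = 0$ and $b^n = 1$, so the statement reduces to the triviality that every polynomial is a multiple of a nonzero constant; and when $p = 0$, both sides say $g = 0$.
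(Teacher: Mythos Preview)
Your proof is correct and follows exactly the same route as the paper: write $p(f) = \widehat{p}/b^n$ with $\widehat{p} = b^n p(f) \in \kk[t]$, use Proposition~\ref{prop:coprime polynomials with hats} to see that $\widehat{p}$ and $b^n$ are coprime, and then apply Lemma~\ref{lem:numerator divides}. Your additional remarks on why $b^n p(f)$ is a polynomial and on the degenerate cases $p$ constant or $p=0$ are a bit more explicit than the paper but change nothing substantive.
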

\begin{proof}
    Letting $\widehat{p} = b^np(f) \in \kk[t]$, we can write
    $$p(f) = \frac{\widehat{p}}{b^n}.$$
    We know that $\widehat{p}$ and $b^n$ are coprime by Proposition \ref{prop:coprime polynomials with hats}. By Lemma \ref{lem:numerator divides}, $p(f) \# g$ if and only if $\widehat{p}$ divides $g$.
\end{proof}

As an easy consequence of Proposition \ref{prop:coprime polynomials with hats} and Corollary \ref{cor:hat polynomial divides}, we get:

\begin{cor}\label{cor:f polynomials divide as usual}
    Let $f \in \kk(t)$, and let $p,q \in \kk[t]$ be coprime polynomials. Let $w,u \in \kk[t]$ such that
    $$\frac{w}{u} = \frac{p(f)}{q(f)}.$$
    Then $p(f) \# w$ and $q(f) \# u$.
\end{cor}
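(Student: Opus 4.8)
The plan is to reduce the statement to Corollary \ref{cor:hat polynomial divides} by clearing denominators in the given identity $w/u = p(f)/q(f)$. Write $f = a/b$ with $a,b \in \kk[t]$ coprime, and set $\widehat{p} = b^m p(f)$, $\widehat{q} = b^m q(f)$ where $m = \max\{\deg(p),\deg(q)\}$. By Proposition \ref{prop:coprime polynomials with hats}, $\widehat{p}$ and $\widehat{q}$ are coprime polynomials in $\kk[t]$. From the hypothesis we have $w \widehat{q} = u \widehat{p}$ in $\kk[t]$, after multiplying through by $b^m$. Since $\widehat{p}$ divides $w \widehat{q}$ and $\gcd(\widehat{p},\widehat{q}) = 1$, Euclid's lemma gives $\widehat{p} \mid w$; symmetrically $\widehat{q} \mid u$. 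Now I would invoke Corollary \ref{cor:hat polynomial divides}: since $\widehat{p}$ divides $w$ (and $\widehat{p} = b^{\deg p} p(f)$ up to the extra factor $b^{m - \deg p}$, which also divides $w$ because $b^{m-\deg p} \mid \widehat{p}$), we conclude $p(f) \# w$, and likewise $q(f) \# u$.

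One small subtlety to address carefully: Corollary \ref{cor:hat polynomial divides} is stated with the exponent $n = \deg(p)$ rather than $m = \max\{\deg p, \deg q\}$. So after obtaining $\widehat{p} = b^m p(f) \mid w$, I should note that $b^{\deg p} p(f)$ divides $b^m p(f)$ (since $m \geq \deg p$), hence $b^{\deg p} p(f) \mid w$, which is exactly the condition for $p(f) \# w$ by Corollary \ref{cor:hat polynomial divides}. The same remark applies to $q(f)$ and $u$ with $n = \deg q$.

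I expect no real obstacle here — the result is, as the paper says, ``an easy consequence.'' The only thing requiring a moment's care is bookkeeping of the powers of $b$: making sure that the single common exponent $m$ used to clear denominators simultaneously in $p(f)$ and $q(f)$ is reconciled with the individual exponents $\deg p$, $\deg q$ appearing in the statement of $\#$ via Corollary \ref{cor:hat polynomial divides}. Everything else is a direct application of unique factorisation in $\kk[t]$ together with the coprimality already established in Proposition \ref{prop:coprime polynomials with hats}.
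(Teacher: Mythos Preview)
Your proposal is correct and follows essentially the same route as the paper: clear denominators via $b^m$, invoke Proposition~\ref{prop:coprime polynomials with hats} for coprimality of $\widehat{p}$ and $\widehat{q}$, apply Euclid's lemma to get $\widehat{p}\mid w$ and $\widehat{q}\mid u$, and then translate back to the relation $\#$. The only cosmetic difference is in the final bookkeeping: the paper assumes without loss of generality that $m=\deg p$, applies Corollary~\ref{cor:hat polynomial divides} directly to obtain $p(f)\mathrel{\#} w$, and then deduces $q(f)\mathrel{\#} u$ by writing $w=p(f)g$ and computing $u=q(f)g$; your symmetric argument via $b^{\deg p}p(f)\mid b^m p(f)$ achieves the same end.
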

\begin{proof}
    Write $f = \frac{a}{b}$, where $a,b \in \kk[t]$ are coprime. Let $n = \max\{\deg(p), \deg(q)\}$, and let $\widehat{p} = b^np(f), \widehat{q} = b^nq(f) \in \kk[t]$. By Proposition \ref{prop:coprime polynomials with hats}, $\widehat{p}$ and $\widehat{q}$ are coprime. Furthermore, $w\widehat{q} = u\widehat{p}$. Hence, $\widehat{p}$ divides $w$ and $\widehat{q}$ divides $u$.
    
    Without loss of generality, we may assume that $n = \deg(p)$. Thus, by Corollary \ref{cor:hat polynomial divides}, $p(f) \# w$. Therefore, there exists $g \in \kk[t]$ such that $w = p(f)g$, so that
    $$up(f) = wq(f) = p(f)q(f)g.$$
    Thus $u = q(f)g$, so $q(f) \# u$.
\end{proof}

For convenience, we introduce some new notation:

\begin{ntt}
    For $f \in \kk(t)$, we write
    $$\kk[f]_\# = \{g \in \kk[t] \mid p(f) \# g \text{ for some } p \in \kk[t] \setminus \kk\}.$$
\end{ntt}

We now show that, provided $f \in \kk(t)$ has a pole at infinity, any polynomial $h \in \kk[t]$ can be written as $h = p(f)g$, where $p \in \kk[t]$, $g \in \kk[t] \setminus \kk[f]_\#$.

\begin{prop}\label{prop:f polynomial times anti f polynomial}
    Let $f \in \kk(t)$ such that $f$ has a pole at infinity, and let $h \in \kk[t]$. Then we can write $h = p(f)g$, where $p \in \kk[t], g \in \kk[t] \setminus \kk[f]_\#$.
\end{prop}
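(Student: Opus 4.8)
The plan is to argue by strong induction on $\deg h$, using Corollary~\ref{cor:hat polynomial divides} to control the degree of any factor of the form $p(f)$ that can be split off from $h$.

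Write $f = \frac{a}{b}$ with $a,b \in \kk[t]$ coprime. The hypothesis that $f$ has a pole at infinity is exactly the statement $\deg a > \deg b$; in particular $\deg a \geq 1$. The first thing I would record is that, for any non-constant $p \in \kk[t]$ with $n = \deg p$, the polynomial $\widehat p = b^n p(f) \in \kk[t]$ satisfies $\deg \widehat p = n\deg a$: factoring $p = c\prod_{i=1}^n (t-\mu_i)$ gives $\widehat p = c\prod_{i=1}^n (a-\mu_i b)$, and each factor $a-\mu_i b$ has degree exactly $\deg a$ because $\deg a > \deg b$ prevents any cancellation of leading terms. In particular $\deg\widehat p \geq 1$, so by Corollary~\ref{cor:hat polynomial divides} no nonzero constant of $\kk[t]$ lies in $\kk[f]_\#$ (a positive-degree polynomial cannot divide a nonzero constant).

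For the induction, we may assume $h \neq 0$, since for $h = 0$ we may take $p = 0$ and $g = 1$. If $h \notin \kk[f]_\#$, we are done with $p = 1$ and $g = h$. Otherwise there is a non-constant $p_0 \in \kk[t]$ with $p_0(f) \# h$; writing $n_0 = \deg p_0$ and $\widehat{p_0} = b^{n_0}p_0(f) \in \kk[t]$, Corollary~\ref{cor:hat polynomial divides} gives $\widehat{p_0} \mid h$, say $h = \widehat{p_0}\,h_1$ with $h_1 \in \kk[t]$. Then $h = p_0(f)\,(b^{n_0}h_1)$, so with $h' = b^{n_0}h_1 \in \kk[t]$ we have $h = p_0(f)\,h'$ and
$$\deg h' = n_0\deg b + \deg h_1 < n_0\deg a + \deg h_1 = \deg\widehat{p_0} + \deg h_1 = \deg h,$$
the strict inequality being precisely where the pole-at-infinity hypothesis enters. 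Since $h \neq 0$ forces $h' \neq 0$, the induction hypothesis applies to $h'$: write $h' = p_1(f)\,g$ with $p_1 \in \kk[t]$ and $g \in \kk[t]\setminus\kk[f]_\#$. Then $h = (p_0p_1)(f)\,g$, and we take $p = p_0p_1$.

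The only genuine obstacle is making the recursion terminate, i.e.\ guaranteeing that splitting off a factor $p_0(f)$ strictly decreases the degree of what remains; this is the displayed inequality $\deg h' < \deg h$. It genuinely relies on $\deg a > \deg b$: if instead $\deg a = \deg b$, then for $\mu_i$ equal to the ratio of the leading coefficients of $a$ and $b$ one would have $\deg(a-\mu_i b) < \deg a$, so $\deg\widehat{p_0}$ could be too small and the degree bound would break. Everything else is routine bookkeeping with Corollary~\ref{cor:hat polynomial divides} and Proposition~\ref{prop:coprime polynomials with hats}.
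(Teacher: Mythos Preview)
Your proof is correct and follows essentially the same approach as the paper: strong induction on $\deg h$, splitting off a factor $p_0(f)$ when $h \in \kk[f]_\#$ and using $\deg a > \deg b$ to show the remaining factor has strictly smaller degree. Your $h_1$ and $h'$ correspond exactly to the paper's $\widehat{h}_1$ and $h_1$, respectively; you additionally handle $h=0$ explicitly and justify $\deg\widehat p = n\deg a$, which the paper simply asserts.
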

\begin{proof}
    If $h \not\in \kk[f]_\#$, then we are done, as we can take $p = 1, g = h$.
    
    If $h \in \kk[f]_\#$, then let $p_1 \in \kk[t] \setminus \kk$ such that $p_1(f) \# h$. We can write
    $$h = p_1(f)h_1,$$
    where $h_1 \in \kk[t]$. Write $f = \frac{a}{b}$, where $a,b \in \kk[t]$ are coprime. By hypothesis, $\deg(a) > \deg(b)$. Let $n_1 = \deg(p_1)$ and let $\widehat{p}_1 = b^{n_1}p_1(f) \in \kk[t]$. By Proposition \ref{prop:coprime polynomials with hats}, $\widehat{p}_1$ and $b^{n_1}$ are coprime. Furthermore,
    $$b^{n_1}h = \widehat{p}_1h_1,$$
    so $\widehat{p}_1$ divides $h$. Write $h = \widehat{p}_1\widehat{h}_1$, where $\widehat{h}_1 \in \kk[t]$. We have
    \begin{align*}
        \deg(h) &= \deg(\widehat{p}_1) + \deg(\widehat{h}_1), \\
        \deg(\widehat{p}_1) &= n_1\deg(a), \\
        \deg(h_1) &= n_1\deg(b) + \deg(\widehat{h}_1).
    \end{align*}
    Combining the above,
    $$\deg(h_1) = \deg(h) - n_1(\deg(a) - \deg(b)) < \deg(h),$$
    since $\deg(a) > \deg(b)$.
    
    Therefore, if $h \in \kk[f]_\#$ then we can write $h = p_1(f)h_1$, where $p_1, h_1 \in \kk[t]$ and $\deg(h_1) < \deg(h)$. Inducting on $\deg(h)$, we can write $h_1 = p_2(f)g$ for $p_2 \in \kk[t], g \in \kk[t] \setminus \kk[f]_\#$. Thus, we can take $p = p_1p_2$ and we are done.
\end{proof}

We are now ready to prove Theorem \ref{thm:subfield generated by R(g)}.

\begin{proof}[Proof of Theorem \ref{thm:subfield generated by R(g)}]
    We have
    $$\kk \subsetneqq K \subseteq \kk(t),$$
    so by L\"uroth's theorem there exists some $f \in \kk(t)$ such that $K = \kk(f)$. Write $f = \frac{a}{b} \in \kk(t)$, where $a, b \in \kk[t]$ are coprime. By applying a M\"obius transformation to $f$ if necessary, we may assume that $\deg(a) > \deg(b)$.
    
    Let $w\del \in \g \setminus \{0\}$. Using Proposition \ref{prop:f polynomial times anti f polynomial}, write $w = p(f)g$, where $p \in \kk[t], g \in \kk[t] \setminus \kk[f]_\#$. We claim that every element of $\g$ can be written as $q(f)g\del$ for some $q \in \kk[t]$.
    
    Let $u\del \in \g \setminus \{0\}$. Then $\frac{w}{u} = \frac{p(f)g}{u} \in \kk(f)$, so
    $$\frac{g}{u} = \frac{w/u}{p(f)} \in \kk(f).$$
    Write
    $$\frac{g}{u} = \frac{r(f)}{q(f)},$$
    where $q,r \in \kk[t]$ are coprime. By Corollary \ref{cor:f polynomials divide as usual}, $r(f) \# g$. But we know that $g \not\in \kk[f]_\#$, so $r$ must be constant. Without loss of generality, we can take $r = 1$, so that
    $$u = q(f)g \in \kk[f]g.$$
    Since $u\del \in \g$ was arbitrary, we conclude that $\g \subseteq \kk[f]g\del$.
    
    We now show that $f$ must be a polynomial. Since $\g$ is infinite-dimensional, there exists $u\del \in \g$ such that $u = q(f)g$, where $q \in \kk[t]$ is such that $n = \deg(q) > \deg(g)$. By Proposition \ref{prop:coprime polynomials with hats}, the polynomials $\widehat{q} = b^nq(f)$ and $b^n$ are coprime. We have
    $$b^nu = \widehat{q}g,$$
    and therefore $b^n$ divides $g$. But $n > \deg(g)$, so $b$ must be constant, and thus $f = \frac{a}{b} \in \kk[t]$, as claimed. Hence, $K = \kk(f)$, where $f \in \kk[t]$, and $\kk[f]g\del = L(f,g)$.
    
    We now have $\g \subseteq L(f,g)$. To conclude the proof, we must show that $f'g \in \kk[f]$, so that $L(f,g)$ is a Lie algebra by Lemma \ref{lem:infinite codimension subalgebra}, and $\g \subseteq L(f)$ by Proposition \ref{prop:L(f)}. Let $p(f)g\del, q(f)g\del \in \g \setminus \{0\}$ with $p \neq q$. Then
    $$[p(f)g\del,q(f)g\del] = r(f)f'g^2\del \in \g \setminus \{0\},$$
    where $r = pq' - p'q \in \kk[t]$. Since $F(\g) = \kk(f)$, it follows that $\frac{r(f)f'g^2}{p(f)g} \in \kk(f)$, so $f'g \in \kk(f)$.
    Therefore,
    $$f'g \in \kk(f) \cap \kk[t] = \kk[f],$$
    as required.
\end{proof}

\section{A sufficient condition for the non-noetherianity of the enveloping algebra of a subalgebra of $\W$}\label{section:condition (*)}

In this section, we adapt the methods of \cite{SierraSpenko} to give an alternative proof that $U(L(f,g))$ is not noetherian. The advantage of this method is that it is more likely to generalise to give a proof of Conjecture \ref{conj:subalgebras of W}. In particular, we give a sufficient condition, which we call condition $(*)$, for a subalgebra $\g \subseteq \W$ to have a non-noetherian enveloping algebra.

\begin{ntt}
    Let $\sigma \in \Aut(\kk[x, y])$ such that $f^\sigma(x, y) = f(x + 1, y)$. Define $T = \kk[x, y][t^{\pm 1};\sigma]$, i.e. as a vector space $$T = \bigoplus_{n \in \ZZ} \kk[x, y]t^n,$$ with $t f(x, y) = f^\sigma(x, y) t = f(x + 1, y) t$. The algebra $T$ is graded with $\deg(t) = 1$ and $\deg(x) = \deg(y) = 0$. For non-homogeneous elements $u \in T$, we write $\deg(u)$ for the highest power of $t$ which appears in the expansion of $u$.
\end{ntt}

\begin{rem}
    The algebra $T$ is isomorphic to $A_1(\kk(y))[\inv{t}]$, where $A_1(\kk(y)) = \kk(y)[t,\del]$ is the first Weyl algebra over $\kk(y)$, with commutation relation
    $$\del t - t\del = 1.$$
    Under this isomorphism, the element $x \in T$ corresponds to $-t\del \in A_1(\kk(y))[\inv{t}]$.
\end{rem}

We define a homomorphism from $W_{\geq -1}$ to $T$, first introduced by Sierra and {\v S}penko. Although they used much more general machinery to construct this map, it is straightforward to directly check that it is a homomorphism. This will allow us to work over the more accessible algebra $T$ by proving that the image of $U(L(f,g))$ in $T$ is not noetherian.

\begin{lem}\label{lem:homomorphism to T}
    The linear map
    \begin{align*}
        \varphi \colon W_{\geq -1} &\to T \\
        e_n &\mapsto - (x + ny)t^n
    \end{align*}
    induces a graded $\kk$-algebra homomorphism $\Phi \colon U(W_{\geq -1}) \rightarrow T$. This homomorphism can also be written as
    $$\Phi(f\del) = -(xf\inv{t} + y(f' - f\inv{t}))$$
    for $f \in \kk[t]$.
\end{lem}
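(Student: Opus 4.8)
The plan is to verify directly that $\varphi$ extends to an algebra homomorphism on $U(W_{\geq -1})$ by checking compatibility with the Lie bracket, and then to re-derive the closed formula for $\Phi(f\del)$. By the universal property of the enveloping algebra, a linear map $\varphi\colon W_{\geq -1}\to T$ induces an algebra homomorphism $\Phi\colon U(W_{\geq -1})\to T$ if and only if $\varphi([e_n,e_m]) = \varphi(e_n)\varphi(e_m) - \varphi(e_m)\varphi(e_n)$ for all $n,m\geq -1$. So the first and main step is to compute both sides of this identity. On the left, $[e_n,e_m] = (m-n)e_{n+m}$, so $\varphi([e_n,e_m]) = -(m-n)(x + (n+m)y)t^{n+m}$. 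On the right, I would compute $\varphi(e_n)\varphi(e_m) = (x+ny)t^n(x+my)t^m$, and here the point is that $t^n$ commutes past $x$ and $y$ via $\sigma$: we have $t^n\, x = (x+n)t^n$ and $t^n\, y = y\,t^n$ (since $\sigma$ fixes $y$), so $(x+ny)t^n(x+my)t^m = (x+ny)(x+n+my)t^{n+m}$. Subtracting the same expression with $n,m$ swapped and expanding the product of the two degree-one factors in $x,y$, the quadratic terms cancel and one is left with $\bigl[(x+ny)(x+n+my) - (x+my)(x+m+ny)\bigr]t^{n+m}$; a short expansion shows this equals $(n-m)(x+(n+m)y)t^{n+m}$, matching the left-hand side. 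This confirms $\Phi$ is well defined; it is graded because $\varphi$ sends the degree-$n$ element $e_n$ to a degree-$n$ element of $T$.

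For the second assertion, I would write a general $f\del\in W_{\geq -1}$ as $f = \sum_n c_n t^{n+1}$, so that $f\del = \sum_n c_n e_n$, and compute $\Phi(f\del) = -\sum_n c_n (x+ny)t^n$. I would then separate this into an $x$-part and a $y$-part: the $x$-part is $-x\sum_n c_n t^n = -x f t^{-1}$ (since $\sum_n c_n t^n = f t^{-1}$ as an element of $\kk[t,t^{-1}]$, using that $f t^{-1}$ lies in $T$ because $f$ has no constant-independent obstruction — $f\in\kk[t]$ so $ft^{-1}\in t^{-1}\kk[t]\subseteq T$). For the $y$-part, $-y\sum_n n c_n t^n$; observing that $f' = \sum_n (n+1)c_n t^n = \sum_n n c_n t^n + \sum_n c_n t^n = \sum_n n c_n t^n + f t^{-1}$, we get $\sum_n n c_n t^n = f' - f t^{-1}$, so the $y$-part is $-y(f' - f t^{-1})$. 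Combining, $\Phi(f\del) = -\bigl(x f t^{-1} + y(f' - f t^{-1})\bigr)$, as claimed. One should double-check the edge term $n=-1$ (i.e.\ the $e_{-1}=\del$ summand): there $c_{-1}$ is the constant term of $f$, $\varphi(e_{-1}) = -(x-y)t^{-1}$, and the formula gives $\Phi(\del) = -(x t^{-1} + y(0 - t^{-1})) = -(x-y)t^{-1}$, consistent.

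I do not expect any serious obstacle here: the whole statement is a routine verification. The one place to be careful is the bookkeeping with the skew relation in $T$ — keeping straight that $t^n x = (x+n)t^n$ while $t^n y = y t^n$ — and making sure the two presentations of $\Phi$ agree, particularly on whether $f t^{-1}$ genuinely lands in $T = \bigoplus_{n\in\ZZ}\kk[x,y]t^n$, which it does since $f\in\kk[t]$ forces $ft^{-1}\in\bigoplus_{n\geq -1}\kk\, t^n$. If one prefers, the bracket check can be phrased more conceptually by noting that $e_n\mapsto -(x+ny)t^n$ is, up to the identification $x = -t\del$, the natural action of $W_{\geq -1}$ on $\kk[t,t^{-1}]$-valued "functions" twisted by the parameter $y$, but the direct computation above is shortest and self-contained.
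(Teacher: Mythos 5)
Your proof is correct and takes essentially the same approach as the paper: the paper reduces the first claim to checking that $\varphi$ is a Lie algebra homomorphism and explicitly leaves that computation to the reader, while you carry it out (correctly handling the skew relation $t^n x = (x+n)t^n$) and arrive at the same matching identity $(n-m)(x+(n+m)y)t^{n+m}$. For the closed formula the paper just verifies the case $f = t^{n+1}$ and invokes linearity, which your general expansion in the coefficients $c_n$ reproduces in slightly more detail.
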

\begin{proof}
    In order to prove that $\Phi$ is a $\kk$-algebra homomorphism, it is enough to check that $\varphi$ is a Lie algebra homomorphism, i.e. that $\varphi([e_n, e_m]) = \varphi(e_n)\varphi(e_m) - \varphi(e_m)\varphi(e_n)$. We leave this computation to the reader.
    
    It suffices to check the final sentence for $f = t^{n+1}$, so that $f\del = e_n$. Note that
    $$f' - f\inv{t} = (n + 1)t^n - t^n = nt^n,$$
    so the formula holds.
\end{proof}

Equipped with the homomorphism $\Phi$, we can now define condition $(*)$.

\begin{dfn}\label{dfn:condition (*)}
    We say that a Lie subalgebra $\g \subseteq \W$ \emph{satisfies condition $(*)$} if there exists some $q \in U(\g)$ with $\Phi(q) \in \kk[y,t,\inv{t}] \setminus \kk[y]$.
\end{dfn}

\begin{rem}
    The requirement that $\Phi(q) \not\in \kk[y]$ guarantees that $\Phi(q)$ is not central in $T$.
\end{rem}

The definition of condition $(*)$ is motivated by Sierra and {\v S}penko's proof that the enveloping algebra of the Witt algebra is not noetherian (see \cite[Section 4]{SierraSpenko}). In their paper, they define the elements
$$p_n = \frac{1}{2}(e_ne_{3n} + e_{3n}e_n) - e_{2n}^2 \in U(W),$$
and then show that
$$\Phi(p_n) = n^2y(1 - y)t^{4n} \in \kk[y,t,\inv{t}] \setminus \kk[y]$$
for all $n \in \ZZ \setminus \{0\}$. They use this to show that the two-sided ideal $\Phi(U(W))\Phi(p_n)\Phi(U(W))$ is not finitely generated as a left or right ideal of $\Phi(U(W))$.

Our goal is to generalise Sierra and {\v S}penko's proof by showing that if a subalgebra $\g \subseteq \W$ satisfies $(*)$, then $\Phi(U(\g))$ is not noetherian. We will do this by showing that the two-sided ideal $\Phi(U(\g))\Phi(q)\Phi(U(\g))$ is not finitely generated as a left ideal of $\Phi(U(\g))$.

For the rest of this section, we let $\g$ be an infinite-dimensional subalgebra of $W_{\geq -1}$ which satisfies $(*)$. Although for our purposes it suffices to consider $\Phi(q) \in y(1 - y)\kk[t]$, it is not much more difficult to work in the more general setting where $\Phi(q) \in \kk[y,t,\inv{t}] \setminus \kk[y]$.

In fact, we claim that without loss of generality, we can choose $q \in U(\g)$ so that $\Phi(q) \in t\kk[y,t] \setminus \{0\}$. Having $\Phi(q)$ in this form will significantly simplify the computations below. To prove that $q \in U(\g)$ can be chosen in this way, we write
$$\Phi(q) = \sum_{i = -m}^n f_i(y)t^i,$$
where $f_i(y) \in \kk[y]$, and let $a \in \g$ such that
$$a = \sum_{i = m+1}^{d} \alpha_i e_i,$$
where $\alpha_i \in \kk$. In other words, $a$ is some element of $\g$ with no terms of degree less than $m + 1$. Letting $q' = aq - qa \in U(\g)$, we have
\begin{align*}
    \Phi(q') &= \sum_{i = m+1}^d \sum_{j = -m}^n \alpha_i f_j(y) \Phi(e_i) t^j - \sum_{i = m+1}^d \sum_{j = -m}^n \alpha_i f_j(y) t^j \Phi(e_i) \\
    &= -\sum_{i = m+1}^d \sum_{j = -m}^n \alpha_i f_j(y) (x + iy)t^{i+j} + \sum_{i = m+1}^d \sum_{j = -m}^n \alpha_i f_j(y) t^j (x + iy)t^i \\
    &= -\sum_{i = m+1}^d \sum_{j = -m}^n \alpha_i f_j(y) (x + iy)t^{i+j} + \sum_{i = m+1}^d \sum_{j = -m}^n \alpha_i f_j(y) (x + iy + j)t^{i+j} \\
    &= \sum_{i = m+1}^d \sum_{j = -m}^n j \alpha_i f_j(y) t^{i+j} \in t\kk[y,t].
\end{align*}
We see that $\Phi(q') \in t\kk[y,t] \setminus \{0\}$. Therefore, we assume that $\Phi(q)$ is of this form.

\begin{ntt}\label{ntt:Phi(q)}
    Write
    $$\Phi(q) = f(y)\sum_{i = 1}^n g_i(y)t^i,$$
    where $f(y), g_i(y) \in \kk[y]$ are such that the $g_i(y)$ share no common factors. Let $1 \leq k \leq n$ be maximal such that $g_k(y)$ is not a multiple of $y$.
\end{ntt}

The next lemma shows that we can generate elements similar to $\Phi(q)$ of arbitrarily high degree by simply commuting $q$ with elements of $\g$.

\begin{lem}\label{lem:general arbitrary degree}
    Let $a \in \g$ and let $m = \deg(a)$. Then $\Phi(aq - qa) \in \kk[y,t]$ and $\deg(\Phi(aq - qa)) = n + m$. Furthermore, writing
    $$\Phi(aq - qa) = f(y)\sum_{i = 0}^{n+m} h_i(y)t^i,$$
    we have that $y$ does not divide $h_{m+k}(y)$.
\end{lem}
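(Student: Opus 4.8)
The plan is to compute $\Phi(aq-qa)$ directly using the presentation $\Phi(q) = f(y)\sum_{i=1}^n g_i(y)t^i$ and the fact that $\Phi$ is an algebra homomorphism into $T$, exactly as was done above to produce the $q'$ with $\Phi(q')\in t\kk[y,t]$. First I would write $a = \sum_{j} \alpha_j e_j$ with $\alpha_m \neq 0$ (so $\deg(a)=m$), so that $\Phi(a) = -\sum_j \alpha_j(x+jy)t^j$. Then, using the commutation rule $t^j h(x,y) = h(x+j,y)t^j$ in $T$, I would expand
$$\Phi(aq-qa) = \sum_{j}\sum_{i=1}^n \alpha_j f(y)g_i(y)\bigl[-(x+jy)t^{i+j} + (x+iy+i? )\cdots\bigr],$$
tracking carefully how conjugating $t^i$ past $(x+jy)$ shifts $x\mapsto x+i$. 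The key cancellation is that the $x$-dependent terms disappear just as before: the coefficient of $xt^{i+j}$ vanishes, leaving something in $\kk[y,t]$. Concretely I expect to get $\Phi(aq-qa) = f(y)\sum_{i,j} i\,\alpha_j g_i(y)t^{i+j}$ (up to a sign/relabeling), which is manifestly in $\kk[y,t]$, and I would double-check the exact constant by specializing to $a = e_m$ and comparing with the earlier computation of $\Phi(q')$.

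The second claim is about the leading degree. Since $a$ has degree $m$, its leading term is $\alpha_m e_m$ with $\alpha_m\neq 0$, and $\Phi(q)$ has leading term (in $t$) equal to $f(y)g_n(y)t^n$ with $g_n(y)\neq 0$. From the formula above, the coefficient of $t^{n+m}$ in $\Phi(aq-qa)$ is $n\,\alpha_m f(y)g_n(y)$ (the only way to reach degree $n+m$ is $i=n$, $j=m$), which is nonzero since $n\geq 1$, $\alpha_m\neq 0$, $f(y)\neq 0$, $g_n(y)\neq 0$. Hence $\deg(\Phi(aq-qa)) = n+m$. There are no lower-degree-than-$0$ terms because $i\geq 1$ and $j\geq -1$... wait, $j$ can be $-1$, giving $t^{i-1}$ with $i\geq 1$, so the lowest power is $t^0$; this matches the claimed expansion $\sum_{i=0}^{n+m} h_i(y)t^i$.

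For the divisibility assertion, I would extract the coefficient $h_{m+k}(y)$ of $t^{m+k}$ in $\frac{1}{f(y)}\Phi(aq-qa) = \sum_{i,j} i\,\alpha_j g_i(y)t^{i+j}$, namely $h_{m+k}(y) = \sum_{j} (m+k-j)\alpha_j g_{m+k-j}(y)$, summing over those $j$ in the support of $a$ with $1\leq m+k-j\leq n$. Since $\deg(a)=m$, the support of $a$ consists of indices $j\leq m$; the term $j=m$ contributes $k\,\alpha_m g_k(y)$, and every other term has $j<m$, hence $i=m+k-j>k$, so $g_i(y)$ is a multiple of $y$ by the maximality of $k$ in Notation \ref{ntt:Phi(q)}. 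Therefore modulo $y$ we have $h_{m+k}(y)\equiv k\,\alpha_m g_k(y)$, which is not divisible by $y$ since $k\neq 0$, $\alpha_m\neq 0$, and $g_k(y)$ is not a multiple of $y$. This gives the final claim.

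The main obstacle is purely bookkeeping: getting the sign conventions and the $x\mapsto x+i$ shifts exactly right in the noncommutative algebra $T$, and making sure the index ranges in the double sum are handled correctly when $j$ can be as small as $-1$ and when $a$ is not homogeneous (so several $j\leq m$ occur). I would mitigate this by first doing the homogeneous case $a=e_m$ cleanly — which essentially reuses the computation already displayed before Notation \ref{ntt:Phi(q)} — and then observing that for general $a$ each homogeneous component contributes additively, with only the top component $\alpha_m e_m$ reaching degree $n+m$ and only it contributing a non-$y$-divisible piece to $h_{m+k}(y)$.
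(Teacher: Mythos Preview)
Your proposal is correct and follows essentially the same route as the paper: a direct computation of $\Phi(aq-qa)$ in $T$ yielding $f(y)\sum_{i,j} i\,\alpha_j g_i(y)t^{i+j}$, from which the degree and the non-divisibility of $h_{m+k}(y)$ by $y$ are read off exactly via the maximality of $k$. The only cosmetic difference is that the paper swaps the roles of your indices $i$ and $j$ and writes the upper limit of the $h_{m+k}$ sum explicitly as $N=\min(n,m+k+1)$, but the argument is identical.
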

\begin{proof}
    Write $a = \sum_{i = -1}^m \alpha_i e_i$, where $\alpha_i \in \kk$ and $\alpha_m \neq 0$. Similarly to the above computation, we have
    $$\Phi(aq - qa) = f(y)\sum_{i = -1}^m \sum_{j = 1}^n j \alpha_i g_j(y) t^{i+j} \in \kk[y,t].$$
    We see that $\deg(\Phi(aq - qa)) = n + m$ and that
    $$h_\ell(y) = \sum_{i+j = \ell}j \alpha_i g_j(y),$$
    for all $\ell$. Letting $N = \min(n,m+k+1)$, we have
    $$h_{m+k}(y) = k \alpha_m g_k(y) + (k + 1)\alpha_{m-1}g_{k+1}(y) + \ldots + N \alpha_{m+k-N} g_N(y).$$
    By maximality of $k$, it follows that $y$ divides $g_{k+1}(y), \ldots, g_N(y)$, but does not divide $g_k(y)$. Hence, $y$ does not divide $h_{m+k}(y)$, since $k\alpha_m \neq 0$.
\end{proof}

We now show that $B = \Phi(U(\g))$ is not left noetherian, which suffices to prove that $U(\g)$ is not noetherian.

\begin{prop}
    The associative algebra $B$ is not left noetherian.
\end{prop}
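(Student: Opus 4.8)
The plan is to exhibit inside the two-sided ideal $B\,\Phi(q)\,B$ an infinite strictly ascending chain of left ideals of $B$; since $B=\Phi(U(\g))$ is a homomorphic image of $U(\g)$, this forces $U(\g)$ to be non-noetherian as well. Because the space of elements of $\W$ of degree $\le m$ is finite-dimensional, infinite-dimensionality of $\g$ lets us pick $a_1,a_2,\dots\in\g$ with $\deg a_1<\deg a_2<\cdots$; set $m_j=\deg a_j$ and
\[
  r_j \;=\; \Phi(a_j q - q a_j) \;=\; [\Phi(a_j),\Phi(q)] \;\in\; B\,\Phi(q)\,B .
\]
By Lemma~\ref{lem:general arbitrary degree}, each $r_j$ lies in $\kk[y,t]$, is divisible by $f(y)$, has $t$-degree $n+m_j$, and, writing $r_j=f(y)\sum_i h_i^{(j)}(y)t^i$, satisfies $y\nmid h^{(j)}_{m_j+k}(y)$ while $y\mid h^{(j)}_i(y)$ for all $i>m_j+k$ (the last point is read off the formula $h^{(j)}_i=\sum_{a+b=i}b\,\alpha_a g_b$ appearing in that proof, together with the maximality of $k$). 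I claim that $r_{N+1}\notin\sum_{j=1}^{N} B r_j$ for every $N$; granting this, the left ideals $J_N=\sum_{j\le N}B r_j$ form a strictly increasing chain, so $B$ is not left noetherian.

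To prove the claim I would reduce modulo $y$. Note $y$ is central in $T$, so $\overline T:=T/yT\cong\kk[x][t^{\pm1};\bar\sigma]$ is a domain, and it decomposes as a $\kk$-vector space as $\overline T=\bigoplus_{k\ge0}x^kZ$ with $Z:=\kk[t,\inv t]$, so that $x\overline T=\bigoplus_{k\ge1}x^kZ$ is a right ideal of $\overline T$ with $Z\cap x\overline T=0$. Two observations make the argument run. First, set $s=v_y(f)$; by the divisibility properties above $y^s$ divides $\Phi(q)$ and every $r_j$ exactly, so $\overline{y^{-s}\Phi(q)}$ and $\rho_j:=\overline{y^{-s}r_j}$ are nonzero elements of $Z$, and $\deg_t\rho_j=m_j+k$ — in particular these $t$-degrees are strictly increasing in $j$. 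Second — and this is the heart — for every $a=\sum_n\beta_n e_n\in\g$ one has $v(\Phi(a))=0$ and $\overline{\Phi(a)}=-x\bigl(\sum_n\beta_n t^n\bigr)\in xZ\subseteq x\overline T$; since $x\overline T$ is a right ideal, reduction modulo $y$ sends every PBW-monomial of $U(\g)$ of length $\ge1$ into $x\overline T$, and hence every $b=\Phi(u)\in B$ satisfies $\overline b=\lambda_b+c_b$ with $\lambda_b\in\kk$ (the projection of $\overline b$ onto the $x^0Z$-summand, equal to the constant term of $u$) and $c_b\in x\overline T$.

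Now suppose $r_{N+1}=\sum_{j=1}^{N}b_j r_j$ with $b_j\in B$. Since $y^s$ divides $r_{N+1}$ and each $r_j$, we get $y^{-s}r_{N+1}=\sum_j b_j\,(y^{-s}r_j)$ in $T$, and reducing modulo $y$,
\[
  \rho \;=\; \sum_{j=1}^{N}\overline{b_j}\,\rho_j \;=\; \sum_{j=1}^{N}\lambda_{b_j}\rho_j \;+\; \sum_{j=1}^{N}c_{b_j}\rho_j ,
\]
where $\rho:=\overline{y^{-s}r_{N+1}}\in Z$. The second sum lies in $x\overline T\cdot\overline T\subseteq x\overline T$, while $\rho$ and the first sum lie in $Z$; as $Z\cap x\overline T=0$ this forces $\rho=\sum_{j\le N}\lambda_{b_j}\rho_j$ with $\lambda_{b_j}\in\kk$, which is impossible because $\rho\ne0$ has $t$-degree $m_{N+1}+k>\max_{j\le N}(m_j+k)$. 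This yields the claim, and hence the Proposition. The step I expect to require the most care is the second observation above: verifying that reduction modulo $y$ linearizes $\Phi$ on $\g$ in the precise sense $\overline{\Phi(\g)}\subseteq x\overline T$, and thereby pinning the "$x^0$-parts" of elements of $\overline B$ down to scalars. Once that is in place the degree contradiction is immediate; note also that working with $y^{-s}(\cdot)$ rather than a naive reduction modulo $y$ is what lets the argument avoid any assumption that $y\nmid f$.
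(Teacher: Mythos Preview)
Your proof is correct and rests on the same key observation as the paper's: every element of $B$ decomposes as a scalar plus an element of the right ideal $(x,y)T$ (in your formulation, $\overline{B}\subseteq \kk + x\overline{T}$). The packaging differs in two ways. First, the paper works directly in $T$: it looks at the $t^{m+k}$-coefficient of $\sum_j b_j c_j$, notes that only graded pieces of $b_j$ of positive $t$-degree can contribute (since $\deg c_j<m+k$), and those pieces lie in $(x,y)T$; hence that coefficient lands in $f(y)(x,y)$, contradicting $h_{m+k}\notin(x,y)$. This sidesteps your $y^{-s}$ normalisation entirely, because divisibility by $f(y)$ is carried along rather than quotiented out. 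Second, the paper argues by contradiction with finite generation of $I=B\Phi(q)B$, whereas you build an explicit ascending chain; these are equivalent. Your reduction-then-projection onto $Z=\kk[t,t^{-1}]$ is arguably cleaner, since it turns the obstruction into a bare $t$-degree comparison and avoids the paper's slightly delicate bookkeeping about which homogeneous pieces of $b_j$ contribute at a given $t$-degree.
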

\begin{proof}
    We will show that $I = B\Phi(q)B$ is not finitely generated as a left ideal of $B$. Note that $I \subseteq f(y)T$.
    
    Assume, for a contradiction, that $I$ is finitely generated as a left ideal of $B$. Let $c_1, \ldots, c_\ell \in I$ be generators of ${}_BI$. Since $\g$ is infinite-dimensional, there is some $a = e_m + \ldots \in \g$ such that $m + k > \deg(c_i)$ for all $1 \leq i \leq \ell$. Write
    $$s = \Phi(aq - qa) = f(y)\sum_{i = 0}^{n+m}h_i(y)t^i \in I = Bc_1 + \ldots + Bc_\ell$$
    as in Lemma \ref{lem:general arbitrary degree}. Then there exist $b_1, \ldots, b_\ell \in B$ such that
    $$s = b_1 c_1 + \ldots + b_\ell c_\ell.$$
    Since $m + k > \deg(c_i)$ for all $i$, it follows that if $b_j c_j$ has a nonzero contribution to the degree $m + k$ term of $b_1 c_1 + \ldots + b_\ell c_\ell$, then we must have $\deg(b_j) \geq 1$. Write $c_i = f(y)\sum_j F_{ij}(x,y)t^j$, where $F_{ij}(x,y) \in \kk[x,y]$. Note that
    \begin{align*}
        \Phi(e_\ell)c_i &= -(x + \ell y)t^\ell f(y)\sum_j F_{ij}(x,y)t^j \\
        &= -f(y)\sum_j (x + \ell y)F_{ij}(x + \ell,y)t^{j+\ell} \in f(y)(x,y)T,
    \end{align*}
    where $(x,y)$ is the ideal of $\kk[x,y]$ generated by $x$ and $y$. We therefore see that the degree $m + k$ term of $b_1 c_1 + \ldots + b_\ell c_\ell$ is contained in $f(y)(x,y)t^{m+k}$. However, Lemma \ref{lem:general arbitrary degree} says that $h_{m+k}(y) \not\in (x,y)$, a contradiction. Therefore, $B$ is not left noetherian.
\end{proof}

The following result now follows immediately.

\begin{thm}\label{thm:not noetherian}
    Let $\g$ be an infinite-dimensional subalgebra of $W_{\geq -1}$ which satisfies $(*)$. Then $U(\g)$ is not noetherian. \hfill \qed
\end{thm}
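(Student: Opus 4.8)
The plan is to deduce the theorem formally from the preceding Proposition, which already establishes that $B = \Phi(U(\g))$ is not left noetherian. The key observation is that $\Phi$ is a $\kk$-algebra homomorphism defined on all of $U(W_{\geq -1})$, and since $\g \subseteq W_{\geq -1}$ we have an inclusion $U(\g) \subseteq U(W_{\geq -1})$ (via the PBW theorem). Hence the restriction $\Phi|_{U(\g)}$ is a surjective $\kk$-algebra homomorphism $U(\g) \twoheadrightarrow B$, so that $B \cong U(\g)/\Ker\!\bigl(\Phi|_{U(\g)}\bigr)$ is a quotient ring of $U(\g)$.

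Next I would invoke the standard fact that a quotient of a left noetherian ring is left noetherian: ascending chains of left ideals in $U(\g)/\Ker(\Phi|_{U(\g)})$ pull back to ascending chains of left ideals of $U(\g)$ (containing $\Ker(\Phi|_{U(\g)})$), which stabilise. Therefore, if $U(\g)$ were left noetherian, then $B$ would be left noetherian, contradicting the Proposition. So $U(\g)$ is not left noetherian. Finally, as recorded in the introduction, the universal enveloping algebra of a Lie algebra is left noetherian if and only if it is right noetherian; consequently $U(\g)$ is not noetherian, which is the assertion of the theorem.

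There is essentially no obstacle at this stage: the entire weight of the argument has been discharged by the chain of results leading to the Proposition, most notably Lemma \ref{lem:general arbitrary degree} (which manufactures elements $\Phi(aq-qa) \in \kk[y,t]$ of arbitrarily large degree whose degree-$(m+k)$ coefficient avoids the ideal $(x,y)$) together with the degree-bookkeeping in the Proposition showing that no finite left-generating set of $B\Phi(q)B$ can reach such elements. The step from ``$B$ not left noetherian'' to ``$U(\g)$ not noetherian'' is purely formal.
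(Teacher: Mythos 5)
Your proposal is correct and matches the paper's (implicit) argument exactly: the paper derives Theorem \ref{thm:not noetherian} immediately from the preceding Proposition that $B = \Phi(U(\g))$ is not left noetherian, via the same observation that $B$ is a homomorphic image of $U(\g)$ and quotients of left noetherian rings are left noetherian. Your additional remark that left and right noetherianity coincide for enveloping algebras is the same justification the paper records in the introduction, so nothing is missing.
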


As stated in Definition \ref{dfn:condition (*)}, condition $(*)$ does not seem like an easy condition to check. The following proposition provides a method to construct elements $q \in U(\W)$ such that $\Phi(q) \in \kk[y,t,\inv{t}]$, providing a simple way to check if a subalgebra of $\W$ satisfies $(*)$.

\begin{prop}\label{prop:magical element p}
    Let $f_1,f_2,g_1,g_2 \in \kk[t]$ such that $f_1f_2 = g_1g_2$, and let $a_i = f_i\del$, $b_i = g_i\del \in W_{\geq -1}$. Let
    $$q = (a_1a_2 + a_2a_1) - (b_1b_2 + b_2b_1) \in U(W_{\geq - 1}).$$ Then $\Phi(q) \in y(1 - y)\kk[t]$. Furthermore, if $\{\deg(f_1),\deg(f_2)\} \neq \{\deg(g_1),\deg(g_2)\}$, then $\Phi(q) \neq 0$ and $\deg(\Phi(q)) = \deg(f_1) + \deg(f_2) - 2$.
\end{prop}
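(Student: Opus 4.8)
The plan is to compute $\Phi(q)$ by hand, using that $\Phi$ is an algebra homomorphism (Lemma~\ref{lem:homomorphism to T}) together with the explicit formula $\Phi(f\del) = -(xf\inv{t} + y(f' - f\inv{t}))$. First I would record the convenient reformulation: if $f = \sum_k c_k t^k \in \kk[t]$, then $\Phi(f\del) = -\sum_k c_k\,(x + (k-1)y)\,t^{k-1}$, so a monomial $c\,t^k$ of $f$ contributes a term of $t$-degree $k-1$. Writing $f_1 = \sum_j \alpha_j t^j$, $f_2 = \sum_k \beta_k t^k$ and using the relation $t^m(x+\lambda y) = (x+m+\lambda y)t^m$ in $T$ (valid for all $m\in\ZZ$, $\lambda\in\kk$), the contribution of the pair $(j,k)$ to $\Phi(a_1a_2+a_2a_1)=\Phi(a_1)\Phi(a_2)+\Phi(a_2)\Phi(a_1)$ is
\[
\alpha_j\beta_k\Bigl[(x + (j-1)y)\bigl(x + (j-1) + (k-1)y\bigr) + (x + (k-1)y)\bigl(x + (k-1) + (j-1)y\bigr)\Bigr]t^{\,j+k-2}.
\]

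The key algebraic step is to expand this bracket and organise it by the total degree $N = j+k$. Setting $p = j-1$ and $r = k-1$, so that $p+r = N-2$ depends only on $N$, a short expansion gives that the bracket equals $2x^2 + (N-2)x + 2(N-2)xy + (N-2)^2y - 2pr\,y(1-y)$; that is, \emph{every summand except the last depends only on $N$, not on how $N$ splits as $j+k$}. Now $\sum_{j+k=N}\alpha_j\beta_k$ is exactly the coefficient of $t^N$ in $f_1f_2$, and since $f_1f_2 = g_1g_2$ this equals the analogous sum $\sum_{j+k=N}\alpha'_j\beta'_k$ formed from $g_1 = \sum_j \alpha'_j t^j$, $g_2 = \sum_k \beta'_k t^k$. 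Hence in $\Phi(q) = \Phi(a_1a_2+a_2a_1) - \Phi(b_1b_2+b_2b_1)$ all the $N$-only terms cancel, leaving
\[
\Phi(q) = -2\,y(1-y)\sum_N\Bigl(\textstyle\sum_{j+k=N}\alpha_j\beta_k(j-1)(k-1) \;-\; \sum_{j+k=N}\alpha'_j\beta'_k(j-1)(k-1)\Bigr)t^{\,N-2}.
\]

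It then remains to check that the right-hand side has no negative powers of $t$. The coefficient of $t^{-1}$ (from $N=1$) vanishes because the factor $(j-1)(k-1)$ is $0$ when $j=0$ or $k=0$, and the coefficient of $t^{-2}$ (from $N=0$) is $\alpha_0\beta_0 - \alpha'_0\beta'_0$, the difference of the constant terms of $f_1f_2$ and $g_1g_2$, hence $0$. This gives $\Phi(q)\in y(1-y)\kk[t]$. For the final assertion, put $D = \deg(f_1f_2) = \deg(g_1g_2)$ and $d_i = \deg(f_i)$, $d_i' = \deg(g_i)$, so $d_1+d_2 = d_1'+d_2' = D$. In the sum over $j+k=D$ only the pair $(j,k)=(d_1,d_2)$ survives (any other choice annihilates $\alpha_j$ or $\beta_k$), and likewise only $(d_1',d_2')$ on the $g$-side, so the coefficient of $t^{D-2}$ in $\Phi(q)$ is $-2\mu\,y(1-y)\bigl[(d_1-1)(d_2-1) - (d_1'-1)(d_2'-1)\bigr]$, where $\mu\neq 0$ is the leading coefficient of $f_1f_2 = g_1g_2$. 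Since $(d_1-1)(d_2-1) = d_1d_2 - D + 1$ and similarly for the $d_i'$, this bracket equals $d_1d_2 - d_1'd_2'$, which is nonzero precisely when $\{d_1,d_2\}\neq\{d_1',d_2'\}$ (two unordered pairs of integers with equal sum coincide iff they have equal product). Under the stated hypothesis this term is nonzero; as $\Phi(a_1a_2)$ and $\Phi(b_1b_2)$ both have $t$-degree $D-2$, it follows that $\Phi(q)\neq 0$ and $\deg(\Phi(q)) = D-2 = \deg(f_1)+\deg(f_2)-2$.

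The computation is elementary once set up; the only step that needs care — and the one in which the whole statement lives — is the bracket expansion that isolates the $pr\,y(1-y)$ term, since that is exactly the mechanism by which the hypothesis $f_1f_2 = g_1g_2$ forces the $x$-dependent part of $\Phi(q)$ to cancel.
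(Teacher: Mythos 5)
Your proposal is correct and follows essentially the same route as the paper: a direct computation of $\Phi(q)$ from the explicit formula for $\Phi(f\del)$, in which the hypothesis $f_1f_2 = g_1g_2$ kills every $x$-dependent term and leaves $-2y(1-y)(f_1'f_2' - g_1'g_2')$. The only difference is bookkeeping --- you expand monomial-by-monomial and group by total degree $N$, whereas the paper works with the polynomials and the identities obtained by differentiating $f_1f_2 = g_1g_2$ once and twice --- and your observation that $\sum_{j+k=N}\alpha_j\beta_k(j-1)(k-1)$ differs from the $t^{N-2}$-coefficient of $f_1'f_2'$ only by an $N$-dependent multiple of the $t^N$-coefficient of $f_1f_2$ shows the two computations agree term for term.
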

\begin{proof}
    Using the formula $\Phi(f\del) = -(xf\inv{t} + y(f' - f\inv{t}))$ from Lemma \ref{lem:homomorphism to T} and the commutation relation $fx = xf + f't$, we get
    \begin{align*}
        \Phi(a_1a_2 + a_2a_1) = 2&x^2f_1f_2t^{-2} + x(f_1'f_2\inv{t} - 2f_1f_2t^{-2} + f_1f_2'\inv{t}) \\
        &+ 2xy(f_1'f_2\inv{t} - 2f_1f_2t^{-2} + f_1f_2'\inv{t}) \\
        &+ y(f_1''f_2 - f_1'f_2\inv{t} + 2f_1f_2t^{-2} - f_1f_2'\inv{t} + f_1f_2'') \\
        &+ 2y^2(f_1' - f_1\inv{t})(f_2' - f_2\inv{t}),
    \end{align*}
    and similarly for $\Phi(b_1b_2 + b_2b_1)$ with $g_i$ replacing $f_i$. Since $f_1f_2 = g_1g_2$, we have $(f_1f_2)' = (g_1g_2)'$ and $(f_1f_2)'' = (g_1g_2)''$, and therefore
    \begin{align}
        f_1'f_2 + f_1f_2' &= g_1'g_2 + g_1g_2' \label{eq:first derivative}\\
        f_1''f_2 + f_1f_2'' + 2f_1'f_2' &= g_1''g_2 + g_1g_2'' + 2g_1'g_2' \label{eq:second derivative}
    \end{align}
    Hence, subtracting $\Phi(b_1b_2 + b_2b_1)$ from $\Phi(a_1a_2 + a_2a_1)$ and using \eqref{eq:first derivative}, we get
    $$\Phi(q) = y(f_1''f_2 + f_1f_2'' - g_1''g_2 - g_1g_2'') + 2y^2((f_1' - f_1\inv{t})(f_2' - f_2\inv{t}) - (g_1' - g_1\inv{t})(g_2' - g_2\inv{t})).$$
    Furthermore,
    \begin{align*}
        (f_1' - f_1\inv{t})(f_2' - f_2\inv{t}) &= f_1'f_2' - (f_1'f_2 + f_1f_2')\inv{t} + f_1f_2t^{-2}, \\
        (g_1' - g_1\inv{t})(g_2' - g_2\inv{t}) &= g_1'g_2' - (g_1'g_2 + g_1g_2')\inv{t} + g_1g_2t^{-2},
    \end{align*}
    and thus, using \eqref{eq:first derivative}, we see that
    $$(f_1' - f_1\inv{t})(f_2' - f_2\inv{t}) - (g_1' - g_1\inv{t})(g_2' - g_2\inv{t}) = f_1'f_2' - g_1'g_2'.$$
    By \eqref{eq:second derivative}, we also have
    $$f_1''f_2 + f_1f_2'' - g_1''g_2 - g_1g_2'' = -2(f_1'f_2' - g_1'g_2').$$
    Combining all the above, we conclude
    $$\Phi(q) = y(1 - y)(f_1''f_2 + f_1f_2'' - g_1''g_2 - g_1g_2'') = -2y(1 - y)(f_1'f_2' - g_1'g_2')  \in y(1 - y)\kk[t].$$
    For the final sentence, we let $n_i = \deg(f_i),m_i = \deg(g_i)$, and assume that $\{n_1,n_2\} \neq \{m_1,m_2\}$. Let $\alpha_i, \beta_i \in \kk^*$ be the leading coefficients of $f_i, g_i$, respectively. Let $\gamma = \alpha_1 \alpha_2 = \beta_1 \beta_2$ and $N = n_1 + n_2 = m_1 + m_2$. The leading term of $f_1'f_2' - g_1'g_2'$ is therefore
    $$(n_1n_2 - m_1m_2)\gamma t^{N-2} \neq 0,$$
    since $\{n_1,n_2\} \neq \{m_1,m_2\}$. Hence, $\Phi(q) \neq 0$ and $\deg(\Phi(q)) = N - 2$.
\end{proof}

In light of Proposition \ref{prop:magical element p}, we make the following definition:

\begin{dfn}
    We say that a subalgebra $\g \subseteq \W$ \emph{satisfies condition $(\dagger)$} if there exist $f_1,f_2,g_1,g_2 \in \kk[t]$ with $f_i\del,g_i\del \in \g$ such that 
    \begin{align*}
        f_1f_2 &= g_1g_2, \\
        \{\deg(f_1),\deg(f_2)\} &\neq \{\deg(g_1),\deg(g_2)\}.
    \end{align*}
\end{dfn}

By Proposition \ref{prop:magical element p}, if $\g$ satisfies $(\dagger)$ then $\g$ satisfies $(*)$, and therefore $U(\g)$ is not noetherian in this case. In particular, it is clear that $L(f,g)$ satisfies condition $(\dagger)$ for any $f,g \in \kk[t] \setminus \{0\}$ such that $f'g \in \kk[f]$ and $\deg(f) \geq 1$, providing an alternative proof of Proposition \ref{prop:U(L(f,g) not noetherian}.

In fact, every known infinite-dimensional subalgebra of $\W$ satisfies $(\dagger)$ (and therefore also satisfies $(*)$), but we have not been able to show that this will always be the case. This is the subject of ongoing research.

\end{document}